\documentclass[12pt,oneside]{amsart}

\usepackage[utf8]{inputenc}
\usepackage[english]{babel}
\usepackage[margin=3cm]{geometry} 
\usepackage{amsmath,amsthm,amssymb}
\usepackage{mathtools}
\usepackage{graphicx}
\usepackage[colorlinks=true, allcolors=blue]{hyperref}
\usepackage{mathrsfs,calligra}
\usepackage{tikz-cd,adjustbox}
\usepackage{spverbatim,comment}
\usepackage[style=alphabetic,backend=biber]{biblatex}
\usepackage[shortlabels]{enumitem}

\usepackage{fancyhdr}

\theoremstyle{plain}
\newtheorem{theorem}{Theorem}[section]
\newtheorem{lemma}[theorem]{Lemma}
\newtheorem{prop}[theorem]{Proposition}
\newtheorem{corollary}[theorem]{Corollary}

\newtheorem{fact}[theorem]{Fact}

\newtheorem{obs}[theorem]{Observation}

\theoremstyle{definition}
\newtheorem{definition}[theorem]{Definition}

\theoremstyle{remark}
\newtheorem{remark}[theorem]{Remark}

\newcommand{\Rom}[1]{\uppercase\expandafter{\romannumeral#1}}

\newcommand{\h}{\mathfrak h}

\newcommand{\mc}{\mathcal}
\newcommand{\ms}{\mathscr}

\newcommand{\C}{\mathcal C}

\newcommand{\M}{\mathcal M}

\renewcommand{\AA}{\mathbb A}
\newcommand{\CC}{\mathbb C}
\newcommand{\DD}{\mathbb D}

\newcommand{\GG}{\mathbb G}

\newcommand{\QQ}{\mathbb Q}

\newcommand{\ZZ}{\mathbb Z}

\DeclareMathOperator{\Gal}{Gal}

\DeclareMathOperator{\Tr}{tr}

\newcommand{\SL}{\operatorname{SL}}
\newcommand{\GL}{\operatorname{GL}}

\DeclareMathOperator{\PGL}{PGL}

\DeclareMathOperator{\im}{Im}

\DeclareMathOperator{\Hom}{Hom}
\DeclareMathOperator{\sHom}{\mathscr{H}\text{\kern -3pt {\calligra\large om}}\,}

\DeclareMathOperator{\Pic}{Pic}

\DeclareMathOperator{\Prym}{Prym}
\DeclareMathOperator{\Sp}{Sp}

\DeclareMathOperator{\CH}{CH}
\DeclareMathOperator{\Id}{Id}
\DeclareMathOperator{\Corr}{Corr}
\DeclareMathOperator{\acts}{\curvearrowright}
\DeclareMathOperator{\Nm}{Nm}
\DeclareMathOperator{\Mod}{Mod}

\DeclareFontFamily{U}{mathx}{}
\DeclareFontShape{U}{mathx}{m}{n}{<-> mathx10}{}
\DeclareSymbolFont{mathx}{U}{mathx}{m}{n}
\DeclareMathAccent{\widecheck}{0}{mathx}{"71}

\DeclarePairedDelimiter\abs{\lvert}{\rvert}

\makeatletter
\let\oldabs\abs
\def\abs{\@ifstar{\oldabs}{\oldabs*}}
\makeatother

\theoremstyle{definition} 
\newcommand{\thistheoremname}{}
\newtheorem*{genericthm}{\thistheoremname}

\newcommand{\Addresses}{{
  \vspace{\bigskipamount}
  \footnotesize

  \textsc{Department of Mathematics, MIT, 77 Massachusetts Avenue, Cambridge, MA 02139, USA}\par\nopagebreak
  \textit{E-mail address}: 
  \texttt{annelars@mit.edu}
  
}}

\title{Mapping class group action on the cohomology of the $\SL_n$ character variety}
\author{Anne Larsen}
\date{\today}

\bibliography{main.bib}
\begin{document}

\begin{abstract}
We describe the mapping class group action on the cohomology of the twisted $\SL_n$-character variety of a surface $\Sigma_g$ of genus $g$. Our main tool is a relative version of the endoscopic decomposition of Maulik-Shen \cite{MS}; this allows us to reduce the problem to the mapping class group action on the cohomology of a canonical finite cover of $\Sigma_g$, which was studied by Looijenga \cite{Looijenga}.

\end{abstract}

\maketitle
\section{Introduction}
Fix relatively prime integers $n \in \ZZ_{>0}$ and $d \in \ZZ$, as well as a genus $g \ge 2$. In this paper we examine the ``twisted" character variety of $\SL_n$-local systems on a once-punctured genus $g$ surface $\Sigma_g \setminus p$ with monodromy $e^{2\pi i d/n} \Id_n$ on the loop $\gamma_p$ around the puncture, i.e., the quotient
$$M^{\SL} := \{\phi \in \Hom(\pi_1(\Sigma_g \setminus p), \SL_n(\CC)): \phi(\gamma_p) = e^{2\pi i d/n} \Id_n\}/\PGL_n(\CC)$$
$$\cong \{A_1, B_1, \cdots, A_g, B_g \in \SL_n(\CC): [A_1, B_1] \cdots [A_g, B_g] = e^{2\pi i d/n} \Id_n\}/\PGL_n(\CC),$$
where the adjoint group $\PGL_n(\CC)$ acts on the target $\SL_n(\CC)$ by conjugation. 
The mapping class group $\Mod(\Sigma_g \setminus p)$ acts naturally on this space by pullback of local systems, i.e., by its outer action on $\pi_1(\Sigma_g \setminus p)$, and our goal is to describe the induced action on $H^*(M^{\SL})$.

In order to state the theorem, we recall that subgroups $H_1, H_2$ of a group $G$ are said to be \textit{commensurable} if $H_1 \cap H_2$ is of finite index in each $H_i$. Also, let $\widetilde{\Sigma}_g^n \to \Sigma_g$ be the finite cover corresponding to the canonical surjection $\pi_1(\Sigma_g) \twoheadrightarrow H_1(\Sigma_g, \ZZ/n)$.

\begin{theorem} \label{thm: intro}
The kernel of the $\Mod(\Sigma_g \setminus p)$ action on $H^*(M^{\SL}, \ZZ)$ is commensurable to the kernel of the $\Mod(\Sigma_g \setminus p)$ action on $H^1(\widetilde{\Sigma}_g^n, \ZZ)$.
\end{theorem}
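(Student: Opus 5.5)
We would prove Theorem~\ref{thm: intro} by comparing $M^{\SL}$ with the twisted $\GL_n$-character variety $M^{\GL}$ and decomposing its cohomology under the action of the $n$-torsion group $\Gamma := H^1(\Sigma_g,\ZZ/n) \cong (\ZZ/n)^{2g}$. The group $\Gamma$ acts on $M^{\SL}$ by tensoring with rank-one local systems of order $n$. There is a finite étale quotient
$$M^{\SL} \times (\CC^*)^{2g} \to M^{\GL},$$
which is a quotient by $\Gamma$. From it we get $H^*(M^{\GL}) \cong H^*(M^{\SL})^{\Gamma} \otimes H^*((\CC^*)^{2g})$, and this isomorphism is $\Mod$-equivariant. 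By Hausel--Thaddeus/Hausel--Rodriguez-Villegas type results, $H^*(M^{\GL})$ is generated by tautological classes. Hence $\Mod(\Sigma_g\setminus p)$ acts on the $\Gamma$-invariant part through $\Sp_{2g}(\ZZ)$, and in fact through a finite quotient on $H^*(M^{\SL})^\Gamma$. The content of the theorem is therefore the variant part. We decompose
$$H^*(M^{\SL}) = \bigoplus_{\kappa \in \widehat\Gamma} H^*(M^{\SL})_\kappa.$$
The mapping class group permutes these summands through its action on $\widehat\Gamma \cong H_1(\Sigma_g,\ZZ/n)$. Each summand is preserved by the congruence subgroup $\Mod[n]$, which is the kernel of the action on $H_1(\Sigma_g,\ZZ/n)$. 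Since $\Mod[n]$ has finite index and is contained, up to finite index, in both kernels in question, it suffices to work within $\Mod[n]$.

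The key input is the relative endoscopic decomposition. For each $\kappa$ of order $m \mid n$, Maulik--Shen identify $H^*(M^{\SL})_\kappa$, up to a Tate twist and degree shift, with the $\kappa$-part of the cohomology of an endoscopic moduli space. That space is the twisted $\GL_{n/m}$ character variety (or Dolbeault moduli space) of the cyclic cover $\pi_\kappa\colon \Sigma_\kappa \to \Sigma_g$ of degree $m$ attached to $\kappa$, twisted by the Prym-type variety. We would prove the relative version: this identification is compatible with the action of the stabilizer of $\kappa$ in $\Mod$, which lifts to $\Mod(\Sigma_\kappa)$ and preserves the endoscopic data. Concretely, the Maulik--Shen isomorphism comes from a correspondence, namely pushforward along $\pi_\kappa$ together with the support theorem over the Hitchin base. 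We would check that every ingredient is natural under diffeomorphisms of $\Sigma_g$ fixing $\kappa$, working on the Betti side via non-abelian Hodge and the fact that the decomposition is topological. The output is that $H^*(M^{\SL})_\kappa$ is, $\Mod[n]$-equivariantly, of the form
$$\bigl(H^*(M^{\GL_{n/m}}_{\Sigma_\kappa})\otimes \textstyle\bigwedge^* H^1(\Sigma_\kappa)^{\mathrm{var}}\bigr)^{\text{twisted invariants}}.$$
On the $\GL$ factor, $\Mod$ again acts through tautological classes, that is, through $H^1(\Sigma_\kappa)$. So the kernel on $H^*(M^{\SL})$ is commensurable with the common kernel of the actions on $H^1(\Sigma_\kappa,\ZZ)$ over all $\kappa$, including the parts of $H^1(\Sigma_\kappa)$ that are variant under the deck group.

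We then conclude with a covering-space argument. Each $\Sigma_\kappa$ is an intermediate cover of $\widetilde\Sigma^n_g \to \Sigma_g$. Transfer therefore makes $H^1(\Sigma_\kappa,\QQ)$ a $\Mod[n]$-equivariant summand of $H^1(\widetilde\Sigma^n_g,\QQ)$, which gives one containment. For the converse, $H^1(\widetilde\Sigma^n_g,\QQ)$ decomposes under the abelian deck group $\Gamma$ into isotypic pieces $H^1(\widetilde\Sigma^n_g)_\chi$. Each such piece is the $\chi$-part of $H^1$ of the cyclic cover $\Sigma_{\ker\chi}$, and by Chevalley--Weil/Looijenga this is exactly the piece appearing in the endoscopic summand for $\kappa=\chi$. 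Hence the two kernels agree up to finite index. Integral versus rational coefficients changes kernels only by finite index, since the groups are arithmetic and torsion is finite, so this does not affect the commensurability statement.

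The main obstacle is the second step: making Maulik--Shen's endoscopic decomposition equivariant for the stabilizer of $\kappa$ in the mapping class group. Their proof lives on the Dolbeault side and uses a fixed complex structure, on which $\Mod$ does not act. The first approach to try is to transport the decomposition to the Betti side, and to show that the identifying correspondence is determined by topological data. That data consists of the Galois $\kappa$-isotypic projection, the pullback/pushforward along $\Sigma_\kappa\to\Sigma_g$ on character varieties, and the perverse filtration only through its degree bookkeeping. One then argues that any self-map induced by a diffeomorphism preserves each piece, because it commutes with the $\Gamma$-action and with these topologically defined maps.
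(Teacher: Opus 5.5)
Your proposal has a genuine gap. It is the step you yourself flag as the main obstacle, and the route you propose for it does not work. The Maulik--Shen isomorphism $H^i(\widecheck M_{n,L}(C))_\kappa\cong H^{i-2d_\gamma}(M_\gamma(C))_\kappa$ is not built from any of the topological maps you list. It comes from the fundamental class of the closure of the graph of a map. That map pulls back line bundles from the singular spectral curves of $\widecheck M^{\ms D}$ to the smooth spectral curves of the cover, and it is defined only over an open part of the endoscopic locus of a $\ms D$-twisted Hitchin base with $\deg\ms D>2g-2$. The correspondence is proved to be an isomorphism on $\kappa$-parts using support theorems for the Hitchin fibration, and is then transported to $\ms D=\omega$ by a vanishing-cycles argument. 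None of this has a Betti-side meaning.

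The maps that do exist on the Betti side come from induction of local systems along $\Sigma_\kappa\to\Sigma_g$, i.e.\ the inclusion of the fixed locus $M_\gamma\hookrightarrow M^{\SL}$. They give either restriction, which preserves degree whereas the isomorphism shifts degree by $2d_\gamma\neq 0$, or a Gysin map. The Gysin map is not the map Maulik--Shen prove to be an isomorphism, so you would need a separate comparison theorem, which you have not supplied. Your fallback, that a mapping class preserves each piece because it commutes with $\Gamma$ and with these maps, only shows that $\Mod[n]$ preserves $H^*(M^{\SL})_\kappa$. It does not show that the identification with the endoscopic side intertwines the two actions, and that is the entire content of the step.

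The missing idea is that the correspondence need not be topological; it only has to be constructed in algebraic families. The paper redoes every Maulik--Shen step over $\M=\M_{g,1}\times_{\M_g}\M_g[n]$, as a morphism in $D^b(W,\CC)$ for $W$ a Zariski open subset of a finite cover of $\M$. The level structure supplies global torsion line bundles $L_\gamma$ and relative covers $\C_\gamma\to\C$. Finite covers are used to trivialize the torsor of components of $M_\pi$, the $n$-th roots of $\ms N$, and the fiber $\pi_\gamma^{-1}(s)$ needed to define $\mu$. Shrinking to a dense open makes the relative graph closure flat, so that it and its fundamental class specialize to the Maulik--Shen correspondence. Hence the relative map is a fiberwise, and therefore global, isomorphism.

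An isomorphism of complexes with locally constant cohomology on $W$ is automatically $\pi_1(W)$-equivariant. Moreover $\pi_1(W)$ has finite-index image in $\pi_1(\M_{g,1})=\Mod(\Sigma_g\setminus p)$, which is all that commensurability requires. A relative nonabelian Hodge correspondence on a further finite cover then identifies this monodromy with the action on $H^*(M^{\SL})$. The extra cover is needed because the fiberwise $\SL$ homeomorphisms are canonical only up to $\Gamma$.

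There are also smaller issues. For the lower bound you need an actual argument that each $H^1(C_\gamma)_{-\kappa_G}$ is detected by $H^*(M^{\SL})^\Gamma\oplus H^*(M^{\SL})_\kappa$; an appeal to Chevalley--Weil is not enough. The paper uses injectivity of $\det^*\colon H^*(\Pic^d(C_\gamma))\to H^*(M_{r,d}(C_\gamma))$ together with $H^*(M_{r,d}(C_\gamma))\cong H^*(M_{1,0}(C))\otimes H^*(M_\pi(C_\gamma))^\Gamma$. Your claim that $\Mod$ acts on $H^*(M^{\SL})^\Gamma$ through a finite quotient is false. Its kernel there is the Torelli group; already for $n=2$, $H^3$ contains a copy of $H^1(\Sigma_g)$. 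This invariant part is exactly what controls the $H^*(M_{1,0}(C))$ factor in the lower-bound argument. Finally, $\Mod[n]$ is not contained in either kernel up to finite index, since both kernels have infinite index. What you need is that each kernel meets $\Mod[n]$ in a finite-index subgroup of itself.
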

By work of Looijenga on the mapping class group action on cohomology of finite abelian covers of $\Sigma_g$ \cite{Looijenga}, this then leads to the following concrete description of the algebraic monodromy group (i.e., the identity component of the Zariski closure of the image of $\Mod(\Sigma_g \setminus p)$ in $\GL(H^*(M^{\SL}, \CC))$):
\begin{corollary} \label{cor: intro}
If $g > 2$, or if $g = 2$ and $(n,6) = 1$, then the algebraic monodromy group of $\Mod(\Sigma_g \setminus p)$ acting on $H^*(M^{\SL}, \CC)$ is the commutator subgroup of $\Sp_{\Gal}(H^1(\widetilde{\Sigma}_g^n, \CC))$, the subgroup of invertible linear transformations respecting both the intersection form and the Galois action of $H_1(\Sigma_g, \ZZ/n)$ on $\widetilde{\Sigma}_g^n$.
\end{corollary}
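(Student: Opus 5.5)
The corollary should follow by deducing it from Theorem~\ref{thm: intro} and then importing Looijenga's computation \cite{Looijenga}. The first step is to translate commensurability of kernels into a statement about Zariski closures. Write $\rho_1 \colon \Mod(\Sigma_g \setminus p) \to \GL(H^*(M^{\SL},\CC))$ and $\rho_2 \colon \Mod(\Sigma_g\setminus p) \to \GL(H^1(\widetilde\Sigma_g^n,\CC))$. Both actions preserve integral lattices, so both images are arithmetic-type discrete groups.

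Theorem~\ref{thm: intro} is only a statement about kernels, so I would not use it alone. Instead I would extract the finer structure from its proof, namely the relative endoscopic decomposition. This should exhibit $H^*(M^{\SL},\CC)$ as a direct sum of pieces. One piece is the $\Gamma=\Pic^0[n]$-invariant part, which comes from $\GL_n$/$\PGL_n$. For the $\GL_n$ character variety, the mapping class group action on cohomology factors through $\Sp_{2g}(\ZZ)$ acting on tautological classes. By Markman-type generation results, this tends to be trivial in the coprime twisted case, or at least to factor through a finite group. The remaining pieces are indexed by nontrivial $\gamma\in H^1(\Sigma_g,\ZZ/n)$ and are built from the cohomology of the cyclic covers $\Sigma_\gamma$ (the isotypic parts of $H^1(\widetilde\Sigma_g^n)$), tensored with finite-monodromy data.

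The aim is a $\Mod$-equivariant description, up to finite-index subgroups and finite-order twists, of $H^*(M^{\SL})$ as a sum of tensor/exterior powers of isotypic pieces of $H^1(\widetilde\Sigma_g^n,\CC)$, with the relevant pieces appearing faithfully. Once this is in hand, the identity component of the Zariski closure of $\rho_1(\Gamma')$ is the image of the identity component of the closure of $\rho_2(\Gamma')$, for $\Gamma'$ a finite-index subgroup. That image is an isogenous quotient, and by commensurability of kernels it is in fact isomorphic on identity components. Passing to finite-index subgroups does not change identity components of Zariski closures, so the algebraic monodromy groups agree up to the isomorphism induced by this representation.

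Next I would invoke Looijenga \cite{Looijenga}. For $g>2$, or $g=2$ with $(n,6)=1$, the image of (a finite-index subgroup of) the mapping class group in $\Sp_{\Gal}(H^1(\widetilde\Sigma_g^n))$ is arithmetic in the derived group of the centralizer of the Galois action. Its Zariski closure's identity component is therefore the commutator subgroup of $\Sp_{\Gal}$; the exceptional genus-$2$ cases are exactly where Looijenga's result needs the extra hypothesis. Passing from $\Sigma_g$ to $\Sigma_g\setminus p$ changes nothing, since the point-pushing subgroup acts through a finite quotient on both sides, which is already implicit in Theorem~\ref{thm: intro}.

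The main obstacle is the step from commensurable kernels to equal algebraic monodromy groups. Equality of kernels up to finite index gives abstract isomorphism of the images up to finite groups, but not that the Zariski closures correspond. I would handle this with Margulis superrigidity, which applies since $[\Sp_{\Gal},\Sp_{\Gal}]$ has higher-rank simple factors under the stated hypotheses. The abstract isomorphism of arithmetic lattices then extends, on a finite-index subgroup, to a rational homomorphism of the identity component. That homomorphism is injective up to a finite center, so the algebraic monodromy of $\rho_1$ is identified with the commutator subgroup of $\Sp_{\Gal}(H^1(\widetilde{\Sigma}_g^n, \CC))$, as claimed.
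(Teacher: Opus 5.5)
Your skeleton (reduce via Theorem~\ref{thm: intro} to the monodromy on $H^1(\widetilde\Sigma_g^n)$, then apply Looijenga's arithmeticity and Borel density) is the paper's. Its proof of Corollary~\ref{cor: monodromy} simply says that, by Theorem~\ref{thm: monodromy}, it suffices to compute the algebraic monodromy on $H^1(C_n)$. You are right that commensurable kernels alone do not determine Zariski closures. However, your handling of that step has two problems.

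First, the claim that the $\Gamma$-invariant part has trivial or finite monodromy is false. $H^*(\widecheck M_{n,L}(C))^\Gamma = H^*(\widehat M_{n,d}(C))$ contains an equivariant copy of $H^1(\Sigma_g)$ among its degree-$3$ tautological classes. So its monodromy is, up to finite index, the infinite group $\Sp_{2g}(\ZZ)$. The paper uses exactly $\ker(\pi_1(W)\acts H^*(\widecheck M_{n,L}(C))^\Gamma)=\ker(\pi_1(W)\acts H^1(C))$ in Lemma~\ref{lemma: lower}, and this is one source of the $\Sp_{2g}(\CC)$ factor in the answer.

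Second, and more seriously, ``injective up to a finite center'' (and ``by commensurability of kernels it is in fact isomorphic'') is asserted without proof, and the obvious justification fails exactly here. The algebraic homomorphism $\phi$ may come from the tensor structure or from superrigidity. Either way, virtual injectivity of $\rho_1$ on Looijenga's lattice $\Lambda$, which is all commensurable kernels give you, does not force $\ker\phi$ to be finite. For characters of order $d$ with $\varphi(d)>2$, the relevant $\QQ$-simple factor is $\mathrm{Res}_{\QQ(\zeta_d)^+/\QQ}\mathrm{SU}(h)$. Over $\CC$ this is $\SL_{2g-2}^{\varphi(d)/2}$, $\Lambda$ is irreducible there, and projection onto a single copy is still injective on it. Compare $\SL_2(\mathcal O_K)$, $K$ real quadratic, under one real embedding. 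So a representation with the same kernel can have strictly smaller algebraic monodromy, and you need extra input.

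The integrality you mention at the outset would suffice. $\rho_1(\Lambda)\subset\GL(H^*(M^{\SL},\ZZ)/\mathrm{tors})$ is discrete, so $\phi$ cannot kill some but not all real factors of a $\QQ$-simple factor, since the image of the irreducible lattice would then be dense in a positive-dimensional group. Nor can it kill a whole $\QQ$-simple factor, since that would put an infinite arithmetic group inside $\rho_2(\ker\rho_1)$. But this yields only an isogeny, not the isomorphism you claim.

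A cleaner fix, closer to the paper and needing no superrigidity, uses representation-level relations in both directions:
\begin{itemize}
\item The proof of Corollary~\ref{cor: upper} (via Theorem~\ref{thm: relative} and Lemma~\ref{lemma: GL}) exhibits $H^*(\widecheck M_{n,L}(C))$ as a subquotient of tensor constructions on $H^1(C_n)$.
\item The proof of Lemma~\ref{lemma: lower} embeds each $H^1(C_\gamma)_{-\kappa_G}$ into $\bigwedge^\bullet H^1(C)\otimes H^*(\widecheck M_{n,L}(C))_\kappa$, with $H^1(C)$ itself seen in the invariant part.
\end{itemize}
On $\pi_1(W)$, each representation therefore lies in the tensor category generated by the other. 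This gives algebraic maps in both directions that are inverse on a Zariski-dense subgroup, hence an honest isomorphism of identity components.

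If you keep superrigidity, note that it needs every $\QQ$-simple factor to have real rank at least $2$, not just some. For $g=2$ this holds exactly because $(n,6)=1$ excludes the orders $d\in\{2,3,4,6\}$.
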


Although the discussion until now has been purely topological, our proof goes via the nonabelian Hodge correspondence, which provides a diffeomorphism between $M^{\SL}$ and the moduli space of stable $\SL_n$-Higgs bundles of degree $d$ on any complex algebraic curve $C$ of genus $g$. In general, moduli spaces of Higgs bundles tend to be more amenable to algebro-geometric methods, and there are results on the cohomology of character varieties for which the only known proof uses the Higgs bundle description in an essential way. 

For example, if we consider the analogously defined twisted character variety $M^{\GL}$, work of Markman on moduli of stable sheaves on surfaces, as applied to moduli spaces of Higgs bundles \cite[Theorem 3]{Markman-integral}, implies the following:
\begin{fact} \label{fact: GL}
    The kernel of the $\Mod(\Sigma_g \setminus p)$ action on $H^*(M^{\GL}, \ZZ)$ is the kernel of the surjection
    $\Mod(\Sigma_g \setminus p) \twoheadrightarrow \Mod(\Sigma_g) \twoheadrightarrow \Sp(H^1(\Sigma_g, \ZZ)).$
    In particular, the monodromy group is $\Sp_{2g}(\ZZ)$.
\end{fact}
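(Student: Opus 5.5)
The plan is to deduce Fact~\ref{fact: GL} from Markman's theorem on integral generators for the cohomology of moduli of sheaves, applied through the nonabelian Hodge diffeomorphism. Fix a curve $C$ of genus $g$. By nonabelian Hodge, $M^{\GL}$ is diffeomorphic to the moduli space $M_{\mathrm{Dol}}$ of stable rank $n$, degree $d$ Higgs bundles on $C$. Via the spectral correspondence, $M_{\mathrm{Dol}}$ is a moduli space of stable pure one-dimensional sheaves on the surface $T^*C$. Because $(n,d)=1$, it carries a universal Higgs bundle $(\mathcal E,\theta)$ on $C \times M_{\mathrm{Dol}}$, unique up to twisting by a line bundle pulled back from $M_{\mathrm{Dol}}$; I normalize it, e.g.\ by imposing a condition on $\det$ at a point. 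Markman's theorem \cite[Theorem 3]{Markman-integral} says that $H^*(M_{\mathrm{Dol}},\ZZ)$ is generated as a ring by the integral Künneth components of the Chern classes of $\mathcal E$ (in the normalized form Markman uses) along $H_*(C,\ZZ)$. Hence $H^*$ is generated by tautological classes: the $H^0(C)$- and $H^2(C)$-components, and classes $\psi_i(\alpha)$ for $\alpha \in H_1(C,\ZZ)$.

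The next step is to compute how $\Mod(\Sigma_g\setminus p)$ acts on these generators. Topologically, the universal object on $\Sigma_g\times M^{\GL}$ is the universal local system, twisted to allow the nontrivial monodromy at the puncture. A mapping class $f$ acts on $M^{\GL}$ by pullback, and $f\times f^*$ preserves the universal family up to a line bundle twist; normalization removes this twist. By naturality of Chern classes and the Künneth decomposition, $f^*$ fixes the $H^0$ and $H^2$ components, since $f$ preserves orientation and the fundamental class. On the $H_1$ components it acts by $\psi_i(\alpha)\mapsto\psi_i(f_*\alpha)$. One must check that Markman's generators are natural here. The nonabelian Hodge diffeomorphism is not equivariant for mapping classes, but the tautological classes can be defined topologically from the universal $\GL_n$ bundle on $\Sigma_g \times M^{\GL}$. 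Nonabelian Hodge carries them to the Higgs ones, since the underlying topological bundles agree up to homotopy. This is the main technical point, and I expect it to be the obstacle, together with handling the twist at the puncture (work on $\Sigma_g$ with a degree $d$ bundle rather than the punctured surface).

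Given this, any $f$ acting trivially on $H_1(\Sigma_g,\ZZ)$ fixes every generator of the ring, so it acts trivially on $H^*(M^{\GL},\ZZ)$. Thus the kernel of the symplectic representation lies in the kernel of the action.

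Conversely, I need to show that the action factors faithfully through $\Sp$. The classes $\psi_2(\alpha)\in H^3(M^{\GL},\ZZ)$ come from the $c_2$ Künneth component. They span a copy of $H_1(\Sigma_g,\ZZ)$, at least rationally, and the map $\alpha\mapsto\psi_2(\alpha)$ is injective. This can be checked using $H^1$, or more simply via the factor $\mathrm{Jac}$: there is a finite cover $M^{\SL}\times (\CC^*)^{2g}\to M^{\GL}$, so $H^1(M^{\GL},\QQ)\cong H^1((\CC^*)^{2g})\cong H^1(\Sigma_g)$ equivariantly. So if $f$ acts trivially on $H^*(M^{\GL},\ZZ)$, it acts trivially on $H^1(\Sigma_g,\QQ)$ and hence on $H_1(\Sigma_g,\ZZ)$. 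The two kernels therefore coincide. Surjectivity of $\Mod(\Sigma_g\setminus p)\to\Sp_{2g}(\ZZ)$ is classical, so the monodromy group is $\Sp_{2g}(\ZZ)$.
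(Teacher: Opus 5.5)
Your proposal is correct and is essentially the paper's own argument: Markman's integral tautological generation is transported to $M^{\GL}$ via nonabelian Hodge, the normalized universal family is natural under mapping classes, and the action on $H^*(\Sigma_g)$ factors classically through $\Sp(H^1(\Sigma_g,\ZZ))$. Your converse via the equivariant injection $H^1((\CC^*)^{2g},\QQ)\hookrightarrow H^1(M^{\GL},\QQ)$ spells out what the paper leaves implicit; it is the same observation as $R^1\pi_*\supset V$ in the proof of Lemma~\ref{lemma: GL}. One small correction: integrally, a condition on $\det\mathcal E$ at a point alone cannot always remove the line-bundle twist, since twisting by $L$ changes it by $L^{\otimes n}$, so use the standard normalization combining $\det\mathcal E|_{p\times M}$ with the determinant of cohomology, which works because $\gcd(n,\,d+n(1-g))=1$.
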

The proof runs as follows:
there is a set of ``tautological" classes in $H^*(M^{\GL})$ defined by taking K\"unneth components of Chern classes of the universal local system on $(\Sigma_g \setminus p) \times M^{\GL}$, and Markman's result is that these tautological classes generate $H^*(M^{\GL})$.
Since the mapping class group action preserves the universal local system and hence its Chern classes, an immediate corollary is a description of the $\Mod(\Sigma_g \setminus p)$ action on $H^*(M^{\GL})$ in terms of the action on $H^*(\Sigma_g)$. But now it is a classical fact that the mapping class group action on $H^*(\Sigma_g)$ factors through a surjection to $\Sp(H^1(\Sigma_g, \ZZ))$. (For a more careful exposition, see, e.g., \cite[\S 4.1]{SP}.)

Returning to the case of $\SL$, Theorem \ref{thm: intro} shows that the mapping class group action is now no longer controlled by $H^1(\Sigma_g)$ but rather $H^1(\widetilde{\Sigma}_g^n)$, reflecting the fact that the cohomology $H^*(M^{\SL})$ is not generated by tautological classes. The issue is the following:
\begin{obs}
    The finite group $\Hom(\pi_1(C \setminus p), \mu_n) \cong H^1(C, \ZZ/n) =: \Gamma$ acts on $M^{\SL}$ via the action of $\mu_n$ on $\SL_n$ by scalar multiplication.
\end{obs}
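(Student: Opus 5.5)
The observation asks us to check that twisting a representation by a character $\chi \in \Hom(\pi_1(C\setminus p),\mu_n)$ preserves $M^{\SL}$ and descends to the quotient by $\PGL_n(\CC)$. The plan is to write the action explicitly and verify these properties one at a time.

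\emph{Identifying $\Gamma$.} Since $\mu_n$ is abelian, $\Hom(\pi_1(\Sigma_g\setminus p),\mu_n)=\Hom(H_1(\Sigma_g\setminus p,\ZZ),\mu_n)$. Removing a point does not change $H_1$, because the class of $\gamma_p$ is a product of commutators. Hence $\Hom(\pi_1(\Sigma_g\setminus p),\mu_n)\cong \Hom(H_1(\Sigma_g,\ZZ),\ZZ/n)=H^1(\Sigma_g,\ZZ/n)=\Gamma$, after choosing a generator of $\mu_n$. In particular every such $\chi$ satisfies $\chi(\gamma_p)=1$.

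\emph{Defining the action.} For $\chi\in\Gamma$ and $\phi$ in the representation space, set $(\chi\cdot\phi)(\gamma)=\chi(\gamma)\phi(\gamma)$, where $\chi(\gamma)\in\mu_n$ is viewed as the scalar matrix $\chi(\gamma)\Id_n$. We check the required properties in order.
\begin{enumerate}[(i)]
\item $\chi\cdot\phi$ is a homomorphism, because scalar matrices are central.
\item It lands in $\SL_n(\CC)$, since $\det(\zeta\Id_n)=\zeta^n=1$ for $\zeta\in\mu_n$.
\item The boundary condition is preserved: $(\chi\cdot\phi)(\gamma_p)=\chi(\gamma_p)\,e^{2\pi i d/n}\Id_n=e^{2\pi i d/n}\Id_n$.
\end{enumerate}
In the generators-and-relations model this is just $(A_i,B_i)\mapsto(\zeta_iA_i,\eta_iB_i)$ with $\zeta_i,\eta_i\in\mu_n$. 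The scalars cancel in each commutator $[\zeta_iA_i,\eta_iB_i]=[A_i,B_i]$, so the relation $[A_1,B_1]\cdots[A_g,B_g]=e^{2\pi i d/n}\Id_n$ still holds.

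\emph{Descending to the quotient.} Conjugation by $h\in\PGL_n(\CC)$ fixes scalar matrices, so $\chi\cdot(h\phi h^{-1})=h(\chi\cdot\phi)h^{-1}$. Thus the map commutes with the $\PGL_n$-action and descends to an algebraic automorphism of the quotient $M^{\SL}$. Finally, $\chi_1\cdot(\chi_2\cdot\phi)=(\chi_1\chi_2)\cdot\phi$ and the trivial character acts trivially, so this is a group action of $\Gamma$.

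None of these steps is a real obstacle; the only point needing care is (iii), which relies on $\gamma_p$ being a product of commutators.
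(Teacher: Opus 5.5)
Your verification is correct and is essentially the routine check the paper leaves implicit. The paper states this as an observation without proof, via the same twisting $\phi\mapsto\chi\cdot\phi$ by central scalars, with $\chi(\gamma_p)=1$ because $\gamma_p$ is a product of commutators.
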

\noindent On $M^{\GL}$, this extends to an action of the connected group $\Hom(\pi_1(C \setminus p), \CC^*) \cong (\CC^*)^{2g}$, and so the corresponding action on $H^*(M^{\GL})$ is trivial. In particular, since the tautological classes on $H^*(M^{\SL})$ are pulled back along the inclusion $M^{\SL} \subset M^{\GL}$, they belong to $H^*(M^{\SL})^\Gamma$.

The existence of $\Gamma$-variant classes in $H^*(M^{\SL})$ was first pointed out by Hausel and Thaddeus \cite{HT}, who interpreted them in terms of stringy cohomology of the corresponding $\PGL$ space in the context of SYZ mirror symmetry. In general, these variant classes are not so well understood; unlike the tautological classes, there is no obvious geometric construction, and so, for example, we do not fully understand the Hodge structure on the variant part of $H^*(M^{\SL})$, although some information on dimensions can be recovered via point-counting techniques \cite{Mereb}. However, more recently Maulik and Shen \cite{MS} were able to relate the variant classes of the moduli space of $\SL$ Higgs bundles on a curve $C$ to the cohomology of moduli spaces of $\GL_{n/m}$-Higgs bundles on $m$-fold covers of $C$. The main technical tool of this paper is a relative (i.e., compatible with the monodromy or mapping class group action) version of their construction, which occupies \S \ref{sec: relative}, and the implications for the mapping class group action on $H^*(M^{\SL})$, along the lines of the proof of Fact \ref{fact: GL} above, are described in \S \ref{sec: monodromy}.

\textbf{Acknowledgements}: I thank my advisor, Davesh Maulik, for suggesting the problem and for helpful conversations. Thanks also to Mirko Mauri for a useful discussion of the relative nonabelian Hodge correspondence. This work was partially supported by an NSF Graduate Research Fellowship under grant no.~2141064.

\section{Relativizing the Maulik-Shen construction} \label{sec: relative}
In this section we prove that the endoscopic correspondence of \cite[Theorem 0.4]{MS}, which is a surjection of cohomology groups of Higgs moduli spaces for a single curve, can be upgraded to a surjection of local systems over a cover of $\M_{g,1}$.

\subsection{Set-up and notation} \label{subsec: setup}
In this section we will work over the base space $\M := \M_{g,1} \times_{\M_g} \M_g[n]$, where by $\M_g[n]$ we mean the moduli space of curves with level $n$ structure introduced in \cite[(5.14)]{DM}. (We note that $\M$ is scheme if $n \ge 3$, and the reader may assume this is always the case by either restricting to the dense open schematic locus or using $\M_g[4]$ when $n = 2$.) Let $\C := \M \times_{\M_g} \M_{g,1}$ be the universal curve, and $\pi: \C \to \M$ the projection map. 
Also, let $s: \M \to \C$ be the section pulled back from the canonical section of $\M_{g,1} \times_{\M_g} \M_{g,1} \to \M_{g,1}$.

\begin{prop}
    There is a subgroup $S$ of the group of $n$-torsion line bundles on $\C$ such that the restriction to any fiber
    $\Pic(\C)[n] \to \Pic([C] \times_\M \C)[n]$
    defines an isomorphism $S \xrightarrow{\sim} \Pic(C)[n]$.
\end{prop}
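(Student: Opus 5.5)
We will use the level structure to trivialize the relative $n$-torsion Picard scheme, and the section $s$ to turn points of it into honest line bundles on $\C$.

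\textbf{Step 1 (trivializing the torsion).} Let $J := \Pic^0_{\C/\M}$ be the relative Jacobian. Since $\C \to \M$ has the section $s$, points of $J$ over $\M$ are represented by line bundles on $\C$, rigidified along $s$, which are unique up to unique isomorphism. A level $n$ structure (\cite[(5.14)]{DM}) is an isomorphism of finite étale group schemes
$$J[n] \cong (\ZZ/n)^{2g}_{\M},$$
compatible with the Weil pairing. Equivalently, it is a full symplectic trivialization of $R^1\pi_*\mu_n \cong J[n]$. On $\M = \M_{g,1} \times_{\M_g} \M_g[n]$ this isomorphism is pulled back from $\M_g[n]$. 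Hence $J[n](\M)$, the group of sections of $J[n] \to \M$, is isomorphic to $(\ZZ/n)^{2g}$. Moreover, evaluation at any point $[C] \in \M$ gives an isomorphism $J[n](\M) \xrightarrow{\sim} J[n]_{[C]} = \Pic(C)[n]$. The reason is that a constant finite étale group scheme over a connected base has all of its fiber points extending uniquely to sections. We need $\M$ connected, which holds for each connected component; if $\M_g[n]$ is disconnected, we argue componentwise, or note that $\M_g[n]$ is connected over $\CC$.

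\textbf{Step 2 (from sections to line bundles).} Each section $\sigma \in J[n](\M)$ corresponds to a line bundle $L_\sigma$ on $\C$, normalized by $s^*L_\sigma \cong \mathcal O_\M$, whose restriction to each fiber is the $n$-torsion class $\sigma([C])$. Because $J$ represents the rigidified Picard functor, the assignment $\sigma \mapsto L_\sigma$ is a group homomorphism $J[n](\M) \to \Pic(\C)$. Its image lies in $\Pic(\C)[n]$, since $L_\sigma^{\otimes n}$ is fiberwise trivial and rigidified, hence trivial by uniqueness. We set $S$ to be this image. Then the restriction map $\Pic(\C)[n] \to \Pic([C]\times_\M \C)[n]$, restricted to $S$, is the composite of $S \cong J[n](\M)$ with the isomorphism of Step 1. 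Injectivity of $\sigma \mapsto L_\sigma$ also follows from Step 1.

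\textbf{Main obstacle.} The delicate point is the representability step. We need that every section of $\Pic_{\C/\M}$ comes from an actual line bundle on $\C$, which is where the section $s$ is essential: it kills the Brauer obstruction. We also need that a fiberwise trivial line bundle, rigidified along $s$, is trivial. The latter uses that $\pi_*\mathcal O_\C = \mathcal O_\M$ together with seesaw, and we will check it carefully. If $\M$ is only a stack (the case $n=2$), we work on the schematic locus, or with $\M_g[4]$ as the paper allows.
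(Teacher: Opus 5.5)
Your proposal is correct and follows the paper's argument. The paper likewise uses the level structure to map $(\ZZ/n)^{2g} = H^0(\M, R^1\pi_*(\ZZ/n)) \to H^0(\M, R^1\pi_*\GG_m) = \Pic_{\C/\M}(\M)$; via the Kummer identification $R^1\pi_*\mu_n \cong \Pic_{\C/\M}[n]$ this is your $J[n](\M)$. It then uses the section $s$ to realize these classes as line bundles on $\C$ trivial along $s$, and checks the fiberwise restriction by base change. The only minor difference is that the paper takes the image of the constant sections $(\ZZ/n)^{2g}$ directly, so it does not need your connectedness argument for $\M$.
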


\begin{proof}
    The projection $\M \to \M_g[n]$ gives, by definition, a canonical isomorphism $R^1\pi_*(\ZZ/n\ZZ) \cong (\ZZ/n\ZZ)^{2g}$ of \'etale sheaves with alternating form.
    Consider the morphism
    \begin{equation} \label{eq: pic torsion}
        (\ZZ/n\ZZ)^{2g} = H^0(\M, R^1\pi_*(\ZZ/n\ZZ)) \to H^0(\M, R^1\pi_* \GG_m) = \Pic_{\M/\C}(\M)
    \end{equation}
    induced by the inclusion $\ZZ/n\ZZ \to \GG_m$.
    Since $\pi: \C \to \M$ has geometrically integral fibers, using the section $s$ described above, we identify $\Pic_{\M/\C}(\M)$ with the subspace of $\Pic(\C)$ consisting of line bundles $\mc L$ such that $s^* \mc L \in \Pic(\M)$ is trivial \cite[Tag 0D28]{stacks-project}.
    Under this identification, for each $\gamma \in (\ZZ/n\ZZ)^{2g}$ we take $L_\gamma \in \Pic(\C)$ to be the image of $\gamma$ under (\ref{eq: pic torsion}). Note that by construction, $L_\gamma$ is an $n$-torsion class. Finally, for each $i: p \hookrightarrow \M$, by proper base change we have $i^* R\pi_*(\ZZ/n\ZZ) = R\pi_{p *} (\ZZ/n\ZZ)$. In particular, we get a morphism from (\ref{eq: pic torsion}) to the corresponding diagram for $\pi_p: \C_p \to p$, which is the inclusion $\Pic^0(\C_p)[n] \to \Pic^0(\C_p)$ \cite[Tag 03RQ]{stacks-project}.
    Thus each $L_\gamma$ restricts to the corresponding element of $\Pic^0(\C_p)[n]$.
\end{proof}

For the rest of this section, we fix an element $\gamma \in (\ZZ/n\ZZ)^{2g}$ of order $m$, and let $r := n/m$. We define $\pi_\gamma: \C_\gamma \to \C$ to be the $m$-fold \'etale cover corresponding to the $m$-torsion line bundle $L_\gamma$, i.e., $V(t^m - 1) \subset |L_\gamma|$, where $t$ is the tautological section on the pullback of $L_\gamma$ to the total space $|L_\gamma|$. We let $G_\gamma \cong \ZZ/m\ZZ$ denote the Galois group, which acts by scaling the fiber coordinate on $|L_\gamma|$ by elements of $\mu_m$. On each fiber $C$, the above gives a canonical identification $(\ZZ/n\ZZ)^{2g} = \Pic^0(C)[n] = H^1(C, \ZZ/n) =: \Gamma$, and the covering space $\pi_\gamma$ corresponds to a canonical map $H_1(C, \ZZ/n) \to G_\gamma$ whose kernel consists of elements pairing trivially with $\gamma$. Also, via the intersection pairing on $H^1(C, \ZZ/n)$, the choice of $\gamma \in \Gamma$ gives a canonically defined element $\kappa := \langle \gamma, - \rangle \in \hat{\Gamma}$. 

Now we are ready to define the various moduli spaces of Higgs bundles arising in the proof.

\begin{itemize}
    \item Let $M_{n,d}(\C) \to \M$ be the (coarse) moduli space of fiberwise stable ($\GL_n$) Higgs bundles of rank $n$ and relative degree $d$ on $\C_\gamma/\M$; this was constructed by Simpson via GIT in \cite[Theorem 4.7]{Simpson-reps1}.
    \item Let
    $$\AA(\C,n) := |\pi_* \omega_{\C/\M}| \times_{\M} |\pi_* \omega^{\otimes 2}_{\C/\M}| \times_{\M} \cdots \times_{\M} |\pi_* \omega_{\C/\M}^{\otimes n}|$$
    be the relative Hitchin base and
    $$h: M_{n,d}(\C) \to \AA(\C,n) \to \M$$
    the (proper) relative Hitchin morphism. We also take $\AA_0(\C,n) \subset \AA(\C,n)$ to be given by the inclusion of the zero section $\M \xhookrightarrow{0} |\pi_* \omega_{\C/\M}|$.
    \item Given an element $\ms L \in \Pic_{\C/\M}(\M)$, we define the coarse moduli space of stable $\SL_n$ Higgs bundles as follows: consider
    $$M^0_{n,d}(\C) := M_{n,d}(\C) \times_{h, \AA(\C,n), 0} \AA_0(\C,n)$$
    the moduli space of trace-free stable Higgs bundles. Then
    $$\widecheck{M}_{n, \ms L}(\C) := M_{n,d}^0(\C) \times_{\det,\Pic_{\C/\M}, \ms L} \M,$$
    where the map $M_{n, d}^0(\C) \to \Pic_{\C/\M}$ is given by $(E, \theta) \mapsto \det E$. For the purposes of this paper, we fix $\ms L := \ms O_{\C}(ds(\M))$, where $s(\M)$ is the image of the section described above.
    \item Given $\gamma \in \Gamma$, tensor product by $L_\gamma$ defines an endomorphism of $\widecheck{M}_{n, \ms L}(\C)$. Let $M_\gamma(\C)$ be the fixed locus, and let $\AA_\gamma(\C) \subset \AA_0(\C,n)$ be the image of $M_\gamma(\C)$ under $h$.
    \item Given $\gamma \in \Gamma$, we define a space
    $$M_\pi(\C_\gamma) := \M_{r,d}(\C_\gamma) \times_{\M_{n,d}(\C)} \widecheck{M}_{n, \ms L}(\C),$$
    where the first map is given by pushforward
    $$\M_{r,d}(\C_\gamma) \mapsto \M_{n,d}(\C), (E, \theta) \mapsto (\pi_{\gamma *} E, \pi_{\gamma *} \theta)$$
    and the second by the inclusion $\widecheck{M}_{n, \ms L}(\C) \subset M_{n,d}(\C)$. We also define $\AA_\pi(\C) \subset \AA(\C_\gamma, r)$ to be the image of $M_\pi(\C_\gamma) \subset M_{r,d}(\C_\gamma)$ under the relative Hitchin map.
\end{itemize}
\begin{remark} \label{remark: D defs}
    In \S \ref{subsec: Yun} we will also use the following generalization of the definitions above:
    if $\ms D \in \Pic(\C)$ is of relative degree $>2g-2$ on the fibers of $\C/\M$ or $\ms D = \omega_{\C/\M}$, we define $M_{n,d}^{\ms D}(\C), \AA^{\ms D}(\C,n)$, etc. as before, with $\omega_{\C/\M}$ replaced by $\ms D$ everywhere. (That is, the Higgs field is now a morphism $\theta: E \to E \otimes \ms D$, and $\AA^{\ms D}(\C,n)$ is changed accordingly.) Similarly, we define $M_{n,d}^{\ms D}(\C_\gamma)$, etc. as before but with $\omega_{\C_\gamma/\M}$ replaced by $\pi_\gamma^* \ms D$.
\end{remark}

\begin{remark}
    We defined all these spaces for $\C/\M$, but for any morphism $S \to \M$, we define corresponding spaces $M_{n,d}(\C_S)$, etc. by base change. In particular, for $* \to \M$ we recover the correspondingly defined spaces of \cite{MS}. When we restrict to a fiber, we use plain font $C := \C_*$, $L := \ms L|_{\C_*}$, etc. This more or less recovers the notation of \cite{MS}, with the following changes: we use $\pi_\gamma: \C_\gamma \to \C$ instead of $\pi: C' \to C$, with $\pi$ now denoting all maps to the base $\M$, and the $\GL/\SL$ variants are written $M/ \widecheck{M}$ by the conventions of, e.g., \cite{PW}, rather than the $\widetilde{M}/M$ of \cite{MS}.
\end{remark}

\subsection{Preparatory steps}
We begin by describing the relative versions of the more or less routine steps of the \cite{MS} construction.
In what follows, we use $\pi$ to denote all maps to the base $\M$ (or suitable covers) when there seems to be no risk of confusion. We also use $\QQ(X)$ to denote the constant sheaf on $X$ and $D^b(X, \QQ)$ for the bounded derived category of constructible sheaves of $\QQ$-vector spaces on $X$.
\begin{prop} \label{prop: step 1}
We have
$$R\pi_* \QQ(M_{r,d}(\C_\gamma)) \cong R\pi_* \QQ(M_{1,0}(\C)) \otimes (R\pi_* \QQ(M_\pi(\C_\gamma)))^\Gamma$$
in $D^b(\M, \QQ)$.
\end{prop}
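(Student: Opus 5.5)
The plan follows the fiberwise argument of \cite{MS} for the analogous statement and checks that every map in it is defined over $\M$, not just over a point. The starting point is the product map
$$q: M_{1,0}(\C) \times_\M M_\pi(\C_\gamma) \to M_{r,d}(\C_\gamma), \qquad ((\mathcal N, \phi), (E,\theta)) \mapsto (\pi_\gamma^*\mathcal N \otimes E,\ \theta + \pi_\gamma^*\phi).$$
Here $M_{1,0}(\C)$ is the relative moduli of rank one degree zero Higgs bundles, namely $\Pic^0_{\C/\M} \times_\M |\pi_*\omega_{\C/\M}|$, and it is a group scheme over $\M$. The map $q$ is equivariant for the antidiagonal action of $\Gamma = \Pic^0_{\C/\M}[n]$, which is a constant group over $\M$ by the Proposition above (the subgroup $S$). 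Under this action $L \in S$ sends $(\mathcal N, E)$ to $(\mathcal N \otimes L^{-1}, E \otimes \pi_\gamma^* L)$.

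\textbf{Step 1: $q$ is a Galois cover with group $\Gamma$.} We show that $q$ identifies $M_{r,d}(\C_\gamma)$ with the quotient $(M_{1,0}(\C) \times_\M M_\pi(\C_\gamma))/\Gamma$. This is a fiberwise statement, proved by the argument of \cite{MS}.
\begin{itemize}
\item \emph{Surjectivity.} Given $(E,\theta)$ on $C_\gamma$, we solve for $\mathcal N$ with $\det(\pi_{\gamma*}(E \otimes \pi_\gamma^*\mathcal N^{-1})) = L$. This is possible because $\det \pi_{\gamma*}(E\otimes\pi_\gamma^*\mathcal N^{-1}) = \det\pi_{\gamma*}E \otimes \mathcal N^{-n}$ and $n$-th roots exist in $\Pic^0$. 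We also subtract the trace part $\frac1n \Tr(\pi_{\gamma*}\theta)$, pulled back.
\item \emph{Fibers.} Two preimages differ by an $\mathcal N$ with $\mathcal N^n$ trivial, that is, by an element of $\Gamma$.
\end{itemize}
Stability is preserved throughout, since pushforward and twisting commute with the relevant operations, and $(n,d)=1$ is used as in \cite{MS}. To pass from the fiberwise bijection to the scheme-theoretic identification over $\M$, we use that $q$ is finite, as both sides are proper over the relative Hitchin base via $h$. We also use that the target is normal: $M_{r,d}(\C_\gamma)$ is smooth over $\M$, because the fiberwise spaces are smooth for coprime $(r,d)$ and the construction of \cite{Simpson-reps1} is compatible with base change.

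\textbf{Step 2: taking invariants.} Since $\Gamma$ is finite and we work with $\QQ$-coefficients, Step 1 gives
$$\QQ(M_{r,d}(\C_\gamma)) \cong (q_*\QQ)^\Gamma.$$
Pushing forward to $\M$ and using that $q$ is finite, hence $q_* = Rq_*$ is exact, we obtain $R\pi_*\QQ(M_{r,d}(\C_\gamma)) \cong (R\pi_*\QQ(M_{1,0}(\C)\times_\M M_\pi(\C_\gamma)))^\Gamma$. Taking invariants is an exact idempotent functor on $\QQ$-linear $\Gamma$-equivariant objects of $D^b(\M,\QQ)$, so this step is harmless.

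\textbf{Step 3: Künneth over $\M$.} Relative Künneth gives
$$R\pi_*\QQ(M_{1,0}(\C)\times_\M M_\pi(\C_\gamma)) \cong R\pi_*\QQ(M_{1,0}(\C)) \otimes R\pi_*\QQ(M_\pi(\C_\gamma)).$$
This is the projection formula combined with base change, and it needs base change for the pushforward from one factor. I would justify it by arguing that each factor is smooth over $\M$ and its $R\pi_*$ is a direct sum of shifted local systems, which should follow from properness over the Hitchin base together with the $\mathbb G_m$-scaling contraction onto the proper nilpotent cone. A more robust option is to apply Künneth to compactly supported pushforwards, where it holds unconditionally, and then use Verdier duality on these smooth spaces.

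It remains to see that $\Gamma$ acts trivially on $R\pi_*\QQ(M_{1,0}(\C))$. The group $\Gamma$ acts there by translation by torsion points of the connected group $\Pic^0_{\C/\M}$, and such translations are homotopic to the identity fiberwise, compatibly over $\M$. Hence
$$(R\pi_*\QQ(M_{1,0}) \otimes R\pi_*\QQ(M_\pi))^\Gamma = R\pi_*\QQ(M_{1,0}) \otimes (R\pi_*\QQ(M_\pi))^\Gamma,$$
which is the claim. The triviality must hold at the level of objects in $D^b$, not just on cohomology sheaves. For this I would use that the action on $R\pi_*$ factors through the action of the connected group scheme $\Pic^0_{\C/\M}$, which acts trivially on the pushforward, as in standard arguments for abelian scheme actions.

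The main obstacle I expect is Step 3: making the relative Künneth isomorphism and the triviality of the torsion-translation action hold in $D^b(\M,\QQ)$ for non-proper spaces. I would handle it via the $\mathbb G_m$-contraction or compactly supported cohomology plus duality, as above. The Step 1 check that $q$ is an isomorphism onto the quotient in families is the secondary difficulty.
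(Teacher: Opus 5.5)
Your proposal is correct and follows essentially the paper's argument. You realize $M_{r,d}(\C_\gamma)$ as the $\Gamma$-quotient of $M_{1,0}(\C)\times_\M M_\pi(\C_\gamma)$ (your antidiagonal action is the one that actually makes the product map invariant), take $\Gamma$-invariants, apply relative K\"unneth, and use that $\Gamma$ acts trivially on $R\pi_*\QQ(M_{1,0}(\C))$ because it acts through translations by the connected group $\Pic^0$. The paper justifies K\"unneth more simply than you do: it uses only that $M_{1,0}(\C)=\Pic^0_{\C/\M}\times_\M|\pi_*\omega_{\C/\M}|$ is topologically locally trivial over $\M$, together with smooth relative K\"unneth \cite[Lemma 3.16]{SP}, so no local-constancy statement for $M_\pi(\C_\gamma)$ via the $\mathbb{G}_m$-contraction is needed, and this local constancy of the $M_{1,0}$ factor is also exactly what your compactly-supported-plus-Verdier-duality variant must invoke to dualize the tensor product.
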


\begin{proof}
We start by noting that tensor product with $L_\delta$ for $\delta \in \Gamma$ (respectively, $\pi_\gamma^* L_\delta$) gives an action of $\Gamma$ on $M_{1,0}(\C)$ (respectively, $M_\pi(\C_\gamma)$), fiberwise with respect to the projection maps to $\M$. Moreover, as in the fiberwise case, the product map
$$M_{1,0}(\C) \times_{\M} M_\pi(\C_\gamma) \to M_{r,d}(\C_\gamma), \,\,((L, s), (E, \theta)) \mapsto (\pi_\gamma^* L \otimes E, \pi_\gamma^* s \otimes 1 + 1 \otimes \theta)$$
is a quotient by the diagonal action of $\Gamma$. Thus, we have
\begin{equation} \label{eq: step 1-1}
R\pi_* \QQ(M_{r,d}(\C_\gamma)) \cong (R\pi_*\QQ (M_{1,0}(\C) \times_{\M} M_\pi(C_\gamma))^\Gamma.
\end{equation}
Next, we note that $M_{1,0}(\C) = \Pic^0_{\C/\M} \times_\M |\pi_* \omega_{\C/\M}|$ is topologically locally trivial over $\M$, and so $R^i\pi_* \QQ(M_{1,0}(\C))$ is a local system for each $i$. Since $\M$ and the fibers $M_{1,0}(C) \cong \Pic^0(C) \times H^0(C, \omega_C)$ and $M_\pi(C)$ are smooth \cite[Proposition 1.1]{MS}, by smooth relative K\"unneth (see, e.g., \cite[Lemma 3.16]{SP}) we have
\begin{equation} \label{eq: step 1-2}
R\pi_*\QQ (M_{1,0}(\C) \times_{\M} M_\pi(\C_\gamma) \cong R\pi_* \QQ(M_{1,0}(\C)) \otimes R\pi_*\QQ(M_\pi(\C_\gamma)).
\end{equation}
Now, we note that $R\pi_* \QQ(M_{1,0}(\C))$ is $\Gamma$-invariant: indeed, by the local triviality, it suffices to see that fiberwise, the $\Gamma$ action extends to an action of the connected group $\Pic^0(C)$ and thus is trivial on cohomology. Taking $\Gamma$ invariants of both sides of (\ref{eq: step 1-2}) in combination with (\ref{eq: step 1-1}), the statement follows.
\end{proof}

\begin{prop} \label{prop: project}
We have
$$R\pi_* \QQ(M_{1,0}(\C)) \cong \oplus_{i=0}^{2g} \bigwedge\nolimits^i R^1\pi_* \QQ(M_{1,0}(\C))[-i]$$
in $D^b(\M, \QQ)$.
In particular, there is a well-defined, split map
$$R\pi_* \QQ(M_{1,0}(\C)) \otimes (R\pi_* \QQ(M_\pi(\C_\gamma)))^\Gamma \to (R\pi_* \QQ(M_\pi(\C_\gamma)))^\Gamma.$$
\end{prop}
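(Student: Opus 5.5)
The plan is to reduce to the relative Picard scheme and then apply Deligne's decomposition theorem for smooth projective families, using a multiplication-by-$k$ argument to identify the summands canonically.

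\textbf{Step 1: reduction to the Jacobian.} Write $M_{1,0}(\C) = \Pic^0_{\C/\M} \times_\M |\pi_*\omega_{\C/\M}|$. The second factor is a vector bundle over $\M$, so its fibers are contractible. Hence pullback along the projection $p: M_{1,0}(\C) \to J := \Pic^0_{\C/\M}$ gives an isomorphism
$$R\pi_*\QQ(J) \xrightarrow{\sim} R\pi_*\QQ(M_{1,0}(\C)).$$
This follows because $Rp_*\QQ = \QQ$, by homotopy invariance for vector bundles. It therefore suffices to prove the statement for the abelian scheme $J \to \M$ of relative dimension $g$.

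\textbf{Step 2: decomposition for the abelian scheme.} Since $J \to \M$ is smooth and projective, Deligne's decomposition theorem gives $R\pi_*\QQ(J) \cong \oplus_i R^i\pi_*\QQ(J)[-i]$. To make this canonical and compatible with the cup-product description, I will instead use the multiplication-by-$k$ endomorphism $[k]: J \to J$ for an integer $k \ge 2$. It acts on $R^i\pi_*\QQ$ by $k^i$, because fiberwise $H^*(\Pic^0(C))$ is the exterior algebra on $H^1$ and $[k]^*$ is multiplication by $k$ on $H^1$. Then $[k]^*$ acts on $R\pi_*\QQ(J)$, and the standard argument of Deligne, later formalized by Beauville and Denninger–Murre, splits the complex into generalized eigen-pieces. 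Since the eigenvalues $k^i$ are distinct, the polynomial $\prod_{j\ne i}([k]^*-k^j)/(k^i-k^j)$ defines an idempotent endomorphism of $R\pi_*\QQ(J)$ in $D^b(\M,\QQ)$. Its image is concentrated in the single degree $i$.

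\textbf{Main obstacle: the idempotents.} The delicate point is checking that these polynomials in $[k]^*$ really are idempotents in the derived category, and not just on cohomology sheaves. I would handle this by first using Deligne's degeneration to obtain some splitting of $R\pi_*\QQ(J)$ as a sum of shifted local systems. With that splitting in hand, the endomorphism $[k]^*$ becomes upper triangular, with off-diagonal terms in
$$\Hom(R^i\pi_*\QQ[-i],\, R^j\pi_*\QQ[-j]), \quad j<i,$$
that is, in $\operatorname{Ext}^{i-j}$ groups between local systems. The polynomial $\prod_{j}([k]^*-k^j)$ then vanishes on $R\pi_*\QQ(J)$. To see this, observe that the product of the diagonal factors kills each graded piece, and the resulting nilpotent error can be eliminated by raising to a high power. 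Because the minimal polynomial has distinct roots after this argument, the eigen-decomposition is well defined. The summand of degree $i$ is then $R^i\pi_*\QQ(J)[-i]$.

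\textbf{Step 3: identifying the summands.} Finally, $R^i\pi_*\QQ(J) \cong \bigwedge^i R^1\pi_*\QQ(J)$ via cup product. This map of local systems can be checked on stalks, where it is the classical statement for the complex torus $\Pic^0(C)$; there is no issue with transferring the conclusion from stalks to the whole sheaf.

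For the ``in particular'', I take the projection of $R\pi_*\QQ(M_{1,0}(\C))$ onto its $i=0$ summand $R^0\pi_*\QQ = \QQ(\M)$. Tensoring this projection with $(R\pi_*\QQ(M_\pi(\C_\gamma)))^\Gamma$ gives the desired map. It is split by the inclusion of the unit $\QQ(\M) \to R\pi_*\QQ(M_{1,0}(\C))$, namely pullback along the structure map.
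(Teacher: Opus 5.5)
Your argument is correct and is essentially the paper's: the paper likewise drops the contractible vector-bundle factor to reduce to the abelian scheme $\Pic^0_{\C/\M}$, and then leaves the standard decomposition $R\pi_*\QQ \cong \oplus_i \bigwedge^i R^1\pi_*\QQ[-i]$ for abelian schemes implicit; you prove it via Deligne's splitting plus cup product. The $[k]^*$-idempotent step is not needed for the stated isomorphism, since Deligne's splitting already supplies one. Where you do use it, triangularity gives $\prod_j([k]^*-k^j)=0$ exactly, because each factor $[k]^*-k^a$ maps $\bigoplus_{j\le a}R^j\pi_*\QQ[-j]$ into $\bigoplus_{j\le a-1}R^j\pi_*\QQ[-j]$, so no ``high power'' is required.
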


\begin{proof}
Using the description of $M_{1,0}(\C)$ above, we see that
$$R\pi_* \QQ(M_{1,0}(\C)) = R\pi_* \QQ(\Pic^0_{\C/\M}),$$
where $\Pic^0_{\C/\M} \to \M$ is an abelian scheme of dimension $g$.
\end{proof}

Recall that the space $M_\pi(\C_\gamma)$ is equipped with commuting actions of $\Gamma$ (by tensor product with the line bundles $\pi_\gamma^* L_\delta$) and $G_\gamma$ (by pullback under deck transformations, which preserve the line bundles $\pi_\gamma^*L_\delta$ since these are pulled back from $\C$).
Since both groups act fiberwise with respect to the projection $\pi: M_\pi(\C_\gamma) \to \M$, we obtain a decomposition of $R\pi_* \CC(M_\pi(\C_\gamma))$ into characters of $\Gamma \times G_\gamma$.
We begin with a description of the action of $\Gamma$ and $G_\gamma$ on the components of $M_\pi(C_\gamma)$. (Recall that $\kappa \in \hat{\Gamma}$ was defined to be dual to $\gamma \in \Gamma$ under the intersection pairing.)

\begin{lemma} \label{lemma: Ggamma = Gamma/K}
    Fiberwise,
    there is a canonical morphism
    \begin{equation} \label{eq: Gamma pi0}
    \Gamma \twoheadrightarrow \Gamma/\ker(\kappa) \xrightarrow{\sim} \pi_0(\Prym(C_\gamma/C)), \delta \mapsto [\pi_\gamma^* L_\delta^{\otimes r}].
    \end{equation}
    Given the choice of $d$ relatively prime to $n$, there is also a canonical isomorphism
    \begin{equation} \label{eq: Ggamma pi0}
    G_\gamma \xrightarrow{\sim} \pi_0(\Prym(C_\gamma/C)), \xi \mapsto \im(\Pic^d(C_\gamma) \mapsto \Prym(C_\gamma/C), A \mapsto A^{-1} \otimes \xi^* A).    
    \end{equation}
     Moreover, letting $L$ be the restriction of $\ms L$ to $C$, both $\Gamma$ and $G_\gamma$ act on $\pi_0((\det \pi_{\gamma *})^{-1}(L))$ via the map
    $$\pi_0(\Prym(C_\gamma/C)) \acts \pi_0((\det \pi_{\gamma *})^{-1}(L))$$
    induced by the natural $\Prym(C_\gamma/C)$-torsor structure of $(\det \pi_{\gamma*})^{-1}(L) \subset \Pic^d(C_\gamma)$.
\end{lemma}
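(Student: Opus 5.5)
Everything here is fiberwise, so I fix a curve $C$ and the cyclic étale cover $\pi_\gamma: C_\gamma \to C$ of degree $m$ with group $G_\gamma \cong \ZZ/m$. The plan starts from the standard structure of the Prym. Let $\Nm: \Pic(C_\gamma) \to \Pic(C)$ be the norm map. Then $\det \pi_{\gamma*} A \cong \Nm(A) \otimes \det \pi_{\gamma*}\ms O$, and $\det\pi_{\gamma*}\ms O$ is a fixed $2$-torsion bundle. Hence $(\det\pi_{\gamma*})^{-1}(L)$ is a fiber of $\Nm$ on $\Pic^d(C_\gamma)$, and it is a torsor under $\Prym(C_\gamma/C) = \ker(\Nm|_{\Pic^0})$. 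For a cyclic étale cover of degree $m$, $\pi_0(\Prym) \cong \ZZ/m$. I would prove this by dualizing $\pi_\gamma^*: \Pic^0(C) \to \Pic^0(C_\gamma)$, whose kernel is $\langle L_\gamma\rangle \cong \ZZ/m$. Under Weil pairing duality, the component group of $\ker \Nm$ is the Cartier dual of $\ker \pi_\gamma^*$. This already gives the torsor statement in the last assertion: any map to $\pi_0(\Prym)$ acts on $\pi_0$ of the fiber through translation.

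For (\ref{eq: Gamma pi0}), take $\delta \in \Gamma$. Then $\Nm(\pi_\gamma^* L_\delta^{\otimes r}) = L_\delta^{\otimes rm} = L_\delta^{\otimes n}$, which is trivial, so $\pi_\gamma^*L_\delta^{\otimes r} \in \Prym$. This gives a homomorphism $\Gamma \to \pi_0(\Prym)$, and $\Gamma$ acts on $M_\pi(C_\gamma)$ by $\pi_\gamma^*L_\delta$. On the determinant-of-pushforward fiber, tensoring a rank $r$ bundle by $\pi_\gamma^*L_\delta$ changes $\det E$ by $\pi_\gamma^*L_\delta^{\otimes r}$, which is exactly the claimed action. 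To identify the kernel with $\ker\kappa$, I would compose with the Cartier duality above. The class of $\pi_\gamma^*L_\delta^{\otimes r}$ in $\pi_0$ is detected by a Weil-pairing type pairing against the generator $L_\gamma$ of $\ker\pi_\gamma^*$. Computing it should give $\langle \delta, \gamma\rangle_{n}$, up to a unit. The factor $r$ and the pullback combine to land in $\mu_m$, because $e_n(\delta,\gamma)$ has order dividing $m$ since $\gamma$ has order $m$. This shows the kernel is $\ker \kappa$ and that the image is all of $\ZZ/m$, since $\kappa$ surjects onto $\mu_m$ by nondegeneracy. I expect this pairing computation to be the main obstacle. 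I would try the standard formula: a degree-zero divisor class of the form $\Nm$-trivial is, up to the identity component, $(1-\xi)$ applied to something. Then I would evaluate the connecting map of $0 \to \Prym^0 \to \ker\Nm \to \pi_0 \to 0$ via the snake lemma applied to multiplication by $n$.

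For (\ref{eq: Ggamma pi0}), I first check that $A \mapsto A^{-1}\otimes \xi^*A$ maps $\Pic^d(C_\gamma)$ into $\ker \Nm$, which holds since $\Nm \circ \xi^* = \Nm$. Because $\Pic^d$ is connected, the image lies in a single component, so the map is well defined. It is a homomorphism in $\xi$ because $A^{-1}\otimes (\xi\eta)^*A = (A^{-1}\otimes\eta^*A)\otimes \eta^*(A^{-1}\otimes \xi^*A)$, and $\eta^*$ acts trivially on $\pi_0(\Prym)$: it is isotopic to the identity through the connected group of translations after a degree normalization, or one can use that $G_\gamma$ acts trivially on $\ZZ/m$ via the duality. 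For $d=0$ this map is trivial, since $A = \ms O$ gives $\ms O$. For general $d$, replacing $A$ by $\ms O(dx)$ for a point $x$ gives $\ms O(d(\xi x - x))$. Its class is $d$ times the class of $\ms O(\xi x - x)$, and the latter generates $\pi_0$, the standard fact for cyclic covers, which can again be checked via the pairing with $L_\gamma$. Since $(d,n)=1$ forces $(d,m)=1$, this is an isomorphism. Finally, the action of $\xi$ on the fiber is $E \mapsto \xi^*E$, which changes $\det E$ by $(\det E)^{-1}\otimes\xi^*\det E$. This is the defining element with $A = \det E \in \Pic^d$, which gives the last assertion for $G_\gamma$.
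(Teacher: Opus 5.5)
Your argument is correct in outline but takes a different route from the paper. The paper's proof is almost entirely by citation. It takes the map $\Gamma \to \pi_0(\Prym(C_\gamma/C))$, together with the identification of its kernel with $\ker(\kappa)$, directly from \cite[Proposition 1.3]{MS}. For $G_\gamma$ it only checks well-definedness and the translation formula $\xi\cdot A = A\otimes(A^{-1}\otimes\xi^*A)$. It then gets bijectivity by cardinality, using the fact from \cite{HT} that $G_\gamma$ acts transitively on the $m$ components of the norm fiber.

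Your $G_\gamma$ argument replaces that citation with a direct computation. Using $\ker\Nm = (1-\xi)\Pic(C_\gamma)$, with components indexed by degree mod $m$, the image of a generator is $\pm d$ times a generating class. Note that $\xi^*\ms O(dx) = \ms O(d\,\xi^{-1}x)$, so your sign is off, but harmlessly. This gains two things the paper leaves implicit:
\begin{enumerate}
\item it shows the map is a group homomorphism, which the paper does not spell out;
\item it pinpoints exactly where $(d,m)=1$ enters; for $d=0$ the map is trivial, as you note.
\end{enumerate}
One justification there should be replaced. $\eta^*$ is not ``isotopic to the identity through translations'' as a group automorphism. Use instead $\eta^*(1-\xi)D - (1-\xi)D = (1-\xi)(\eta^*-1)D \in (1-\xi)\Pic^0(C_\gamma)$, or your duality remark (since $G_\gamma$ fixes $\ker\pi_\gamma^*$ pointwise).

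The one step you leave open is identifying the kernel of $\delta\mapsto[\pi_\gamma^*L_\delta^{\otimes r}]$ with $\ker(\kappa)$. This needs no snake-lemma computation; it follows from functoriality of the Cartier pairing within your own framework. For a composite $g\circ f$ one has $\langle \hat g(x), y\rangle_f = \langle x, y\rangle_{g f}$ for $y\in\ker f$. The reason is that the $\ker f$-linearization of $f^*g^*\mathcal P_x$ is the restriction of the $\ker(gf)$-linearization.

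Apply this to $[m] = \Nm\circ\pi_\gamma^*$ on $\Pic^0(C)$. Here $\widehat{\Nm} = \pi_\gamma^*$, and the Cartier pairing of $[m]$ is the Weil pairing $e_m$. So for $z\in\Pic^0(C)[m]$, the class of $\pi_\gamma^*z$ in $\pi_0(\ker\Nm)\cong(\ker\pi_\gamma^*)^\vee$ is the character $L_\gamma\mapsto e_m(z, L_\gamma)$. Take $z = L_\delta^{\otimes r}$ and use $L_\gamma\in\Pic^0(C)[m]$. Then this character is $e_m(L_\delta^{\otimes r}, L_\gamma) = e_n(L_\delta, L_\gamma) = \kappa(\delta)^{\pm 1}$. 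This gives kernel $\ker(\kappa)$ and image all of $\mu_m$ at once.

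With that filled in, your proof is complete and self-contained. The paper's proof is shorter but rests entirely on \cite{MS} and \cite{HT}.
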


\begin{proof}
    The description of morphism (\ref{eq: Gamma pi0}) comes from \cite[Proposition 1.3]{MS}. By definition, we say that $\Gamma$ acts on $(\det \pi_{\gamma*})^{-1}(L)$ by tensor product with $\pi_\gamma^* L_\delta^{\otimes r}$, and so the compatibility with $\Gamma \to \pi_0(\Prym(C_\gamma/C))$ is clear. (Also, the description of $(\det \pi_{\gamma *})^{-1}(L)$ as a torsor follows from \cite[Proposition 3.10]{HP}.)
    
    As for $G_\gamma$, we note first that morphism (\ref{eq: Ggamma pi0}) is well-defined because $\Pic^d(C_\gamma)$ is connected. We also note that the $G_\gamma$ action on $\pi_0((\det \pi_{\gamma *})^{-1}(L))$ does factor as described: indeed, choosing any $A \in (\det \pi_{\gamma *})^{-1}(L)$, we have
    $$\xi \cdot A = A \otimes (A^{-1} \otimes \xi^* A)$$
    where by definition $[A^{-1} \otimes \xi^* A] = \im(\xi) \in \pi_0(\Prym(C_\gamma/C))$. Finally, since the image of (\ref{eq: Ggamma pi0}) in $\pi_0(\Prym(C_\gamma/C)) \cong \ZZ/m$ acts transitively on the set $\pi_0((\det \pi_{\gamma *})^{-1}(L))$ of cardinality $m$ \cite[Facts on p.~20]{HT}, the morphism (\ref{eq: Ggamma pi0}) must be surjective, thus (again for cardinality reasons) an isomorphism.
    
\end{proof}
In particular, using the identification $\Gamma/\ker(\kappa) \cong G_\gamma$ given by the lemma (which depends nontrivially on $d$ but is otherwise canonical), we identify the character of $\Gamma/\ker(\kappa)$ induced by $\kappa \in \hat{\Gamma}$ with a character $\kappa_{G} \in \widehat{G_\gamma}$.

\begin{prop} \label{prop: st = kappa}
After passing to a finite cover $S \to \M$, there is an automorphism of $R\pi_* \CC(M_\pi(C_\gamma))$ inducing isomorphisms
$$(R\pi_* \CC(M_\pi(\C_\gamma)))_{(\alpha, \beta)} \cong (R\pi_* \CC(M_\pi(\C_\gamma)))_{(\alpha + \kappa,\beta + \kappa_G)}$$
in $D^b(S, \CC)$, for each $(\alpha, \beta) \in \hat{\Gamma} \times \hat{G}_\gamma$.
\end{prop}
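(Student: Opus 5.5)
Following Maulik--Shen, I would realize the automorphism as cup product with a fundamental-class-type element coming from the component structure of $M_\pi(\C_\gamma)$.

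Fiberwise, the map $(E,\theta)\mapsto \det(\pi_{\gamma*}\text{-preimage})$, i.e.\ the composite of $M_\pi(C_\gamma)\subset M_{r,d}(C_\gamma)\xrightarrow{\det}\Pic^d(C_\gamma)$, lands in $(\det\pi_{\gamma*})^{-1}(L)$. This gives a map
$$M_\pi(C_\gamma)\to \pi_0\big((\det \pi_{\gamma*})^{-1}(L)\big),$$
a set of cardinality $m$ that is a torsor for $\pi_0(\Prym(C_\gamma/C))\cong \ZZ/m$. By Lemma \ref{lemma: Ggamma = Gamma/K}, both $\Gamma$ and $G_\gamma$ act on this set through $\pi_0(\Prym)$, via the maps (\ref{eq: Gamma pi0}) and (\ref{eq: Ggamma pi0}), and these actions are compatible with the maps from $M_\pi(C_\gamma)$. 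Relatively over $\M$, the finite étale scheme $\pi_0((\det\pi_{\gamma*})^{-1}(\ms L))\to\M$ need not be trivial. I would therefore pass to the finite cover $S\to\M$ that trivializes it, so that we may choose a bijection $\pi_0\cong \ZZ/m$ globally, equivariant for $\pi_0(\Prym)\cong\ZZ/m$. After this trivialization, $M_\pi(\C_\gamma)_S=\bigsqcup_{j\in\ZZ/m} M^j$ decomposes into open and closed pieces $M^j$, permuted simply transitively.

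Let $\zeta$ be the primitive $m$-th root of unity determined by $\kappa_G$ (equivalently, by $\kappa$ on $\Gamma/\ker\kappa\cong\ZZ/m$). Define the locally constant function $f$ on $M_\pi(\C_\gamma)_S$ by $f|_{M^j}=\zeta^{j}$. Multiplication by $f$ is an automorphism of $\CC(M_\pi(\C_\gamma)_S)$, hence induces an automorphism $\Phi$ of $R\pi_*\CC(M_\pi(\C_\gamma))$ in $D^b(S,\CC)$. To identify this correctly I would normalize the trivialization compatibly with the identifications in Lemma \ref{lemma: Ggamma = Gamma/K}.

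Next I would compute how $\Phi$ interacts with the group actions. For $\delta\in\Gamma$, the action $\delta$ maps $M^j$ to $M^{j+\bar\delta}$, where $\bar\delta$ is the image in $\ZZ/m$. Hence $\delta^*f=\zeta^{\bar\delta}f=\kappa(\delta)f$, giving $\delta^*\circ\Phi=\kappa(\delta)\,\Phi\circ\delta^*$. Therefore $\Phi$ sends the $\alpha$-isotypic part to the $(\alpha+\kappa)$-isotypic part; the sign convention is fixed by replacing $f$ with $f^{-1}$ if needed. Similarly, $\xi\in G_\gamma$ shifts components by the image of $\xi$ under (\ref{eq: Ggamma pi0}), and $\kappa_G$ was defined precisely through the identification $\Gamma/\ker\kappa\cong G_\gamma$. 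Consequently the same $f$ twists the $G_\gamma$-character by $\kappa_G$ simultaneously. Since $\Phi$ is invertible with inverse multiplication by $f^{-1}$, it yields the claimed isomorphisms of the $(\alpha,\beta)$ and $(\alpha+\kappa,\beta+\kappa_G)$ summands.

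The main obstacle is the relative setting. One must check that the component map is defined over $\M$, that $\pi_0$ forms a finite étale torsor under the finite étale group scheme $\pi_0(\Prym(\C_\gamma/\C))$, and that a finite cover trivializes it; over $\M$, the level structure already trivializes $\Gamma$, and hence trivializes the group. A further point is to confirm that the $\Gamma$ and $G_\gamma$ actions shift the component index by exactly the same element under the isomorphism of Lemma \ref{lemma: Ggamma = Gamma/K}, so that a single $f$ works for both characters.
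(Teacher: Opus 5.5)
Your proposal is correct and follows the paper's proof. In both, you pass to a finite cover that trivializes the torsor $\pi_{0,rel}(\Nm^{-1}(\ms L))$ under $\pi_{0,rel}(\Prym(\C_\gamma/\C)) \cong \Gamma/\ker(\kappa)$, and the automorphism scales each piece by the corresponding value of $\kappa$ (equivalently $\kappa_G$). Your commutation relation $\delta^*\Phi = \kappa(\delta)\,\Phi\,\delta^*$ is a slightly cleaner way to verify the character shift than the paper's explicit description of isotypic parts via a generator $\eta$. Using $\det$ instead of the paper's map $q = (\det, \Tr)$ changes nothing, since your argument never needs the pieces $M^j$ to be connected.
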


\begin{proof}
Given a smooth family $X \to \M$, we let $\pi_{0, rel}(X) \to \M$ denote the cover whose fiber over $m \in M$ is $\pi_0(X_m)$. As described in \cite[Proof of Proposition 1.1]{MS}, the morphism
$$q: M_{r,d}(\C_\gamma) \to M_{1,d}(\C_\gamma), (E, \theta) \mapsto (\det E, \Tr \theta)$$
restricted to $M_\pi(\C_\gamma)$
induces a homeomorphism $\pi_{0, rel}(M_\pi(\C_\gamma)) \cong \pi_{0, rel}(\im(q))$, where $\im(q)$ is an affine bundle over $\Nm^{-1}(\ms L) := \Pic^d(\C_\gamma/\M) \times_{(\det \pi_{\gamma *}), \Pic^d(\C/\M), \ms L} \M$.
In particular $\pi_{0,rel}(\im(q)) = \pi_{0, rel}(\Nm^{-1}(\ms L))$, where the latter is a torsor of the relative group $\pi_{0, rel}(\Prym(\C_\gamma/\C)) = (\Gamma/\ker(\kappa)) \times \M$ by Lemma \ref{lemma: Ggamma = Gamma/K}. After passing to a finite cover $S \to \M$ (say, of order $\le m = |\Gamma/\ker(\kappa)|$), we may assume that this torsor is trivial, so that $M_\pi(\C_{\gamma, S})$ splits into $m$ components.
Furthermore, after arbitrarily choosing a preferred component $M_0$ to trivialize the torsor, we may identify the set of components with the group $\Gamma/\ker(\kappa) \cong G_\gamma$. (Note that the morphism $q$ and the projection $\im(q) \to \Nm^{-1}(\ms L)$ are $\Gamma$- and $G_\gamma$-equivariant, where $\Gamma$ acts on $M_{1,d}(\C_\gamma)$ and $\Nm^{-1}(\ms L)$ by $\pi_\gamma^* L_\delta^{\otimes r}$, as described in Lemma \ref{lemma: Ggamma = Gamma/K}. In particular, the identification $\pi_{0, rel}(M_\pi(\C_\gamma)) \cong \pi_{0, rel}(\Nm^{-1}(\ms L))$ is $\Gamma \times G_\gamma$-equivariant.)

We now consider the automorphism $f$ of
$$R\pi_* \CC(M_{\pi}(\C_{\gamma, S})) = \oplus_{\beta \in \Gamma/\ker(\kappa)} R\pi_* \CC(\beta \cdot M_0) \in D^b(S, \CC)$$
given by scaling each $R\pi_* \CC(\beta \cdot M_0)$ by $\kappa(\beta) \in \CC^*$. To see that this induces the needed $\Gamma$-character shift, choose $\eta \in \Gamma$ whose image generates the cyclic group $\Gamma/\ker(\kappa)$. Then for each $0 \le i < m$ we take $\eta^i: M_0 \to [\eta^i] \cdot M_0$ as our preferred isomorphism, inducing an identification
$$R\pi_* \CC(M_\pi(\C_{\gamma, S})) \cong \oplus_{i=0}^{m-1} R\pi_* \CC(M_0) \in D^b(S, \CC).$$
Given a character $\alpha \in \hat{\Gamma}$, under this identification we have
$$ (R\pi_* \CC(M_\pi(\C_{\gamma, S})))_\alpha \cong (1, \alpha(\eta), \ldots, \alpha(\eta)^{m-1}) \cdot (R\pi_* \CC(M_0))_{\alpha|_{\ker(\kappa)}} \subset \oplus_{i=0}^{m-1} R\pi_* \CC(M_0)$$
as $\ker(\kappa)$ acts trivially on each component, $\eta$ permutes the components as described, and these commute and generate $\Gamma$. Since $f$ acts on the $i^{\mathrm{th}}$ factor $R\pi_* \CC(M_0)$ by $\kappa(\eta)^i$, it is clear from this formula that there is a character shift $\alpha \to \alpha + \kappa$. Finally, the proof for the $G_\gamma$-character shift is the same, except that there is no kernel of the action on $\pi_0(M_\pi(\C_{\gamma,S}))$ to consider, and that via the identifications of Lemma \ref{lemma: Ggamma = Gamma/K}, now $f$ acts according to the character $\kappa_G$ of $G_\gamma$.
\end{proof}

\begin{prop} \label{prop: project Ggamma}
    There is a split projection map
    $$(R\pi_* \CC(M_\pi(\C_\gamma))_\kappa \to (R\pi_*\CC(M_\pi(\C_\gamma)))_\kappa^{G_\gamma}$$
    in $D^b(\M, \CC)$.
\end{prop}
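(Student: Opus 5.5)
The plan is to build the projection by averaging over the finite group $G_\gamma$, which acts fiberwise on $M_\pi(\C_\gamma)$ over $\M$. Since $G_\gamma$ acts through automorphisms of $M_\pi(\C_\gamma)$ over $\M$, it acts on $R\pi_*\CC(M_\pi(\C_\gamma))$ by automorphisms in $D^b(\M,\CC)$.

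First, I will check that this action commutes with the $\Gamma$ action. Both actions are defined over $\M$: $\Gamma$ acts by tensoring with $\pi_\gamma^*L_\delta$, and $G_\gamma$ acts by pullback under deck transformations. Deck transformations fix the line bundles $\pi_\gamma^*L_\delta$, so the two actions commute. Hence the $\Gamma$-isotypic summand $(R\pi_*\CC(M_\pi(\C_\gamma)))_\kappa$ is preserved by $G_\gamma$. This uses that $D^b(\M,\CC)$ is Karoubian, or concretely that the summand is the image of the idempotent
$$e_\kappa = \frac{1}{|\Gamma|}\sum_{\delta\in\Gamma} \kappa(\delta)^{-1}\,\delta_*.$$
This idempotent commutes with each $\xi_*$ for $\xi\in G_\gamma$.

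Next, I will take the idempotent
$$e_{G} := \frac{1}{m}\sum_{\xi \in G_\gamma} \xi_*$$
acting on $(R\pi_*\CC(M_\pi(\C_\gamma)))_\kappa$. Since $\QQ\subset\CC$ and $m$ is invertible, and the $\xi_*$ compose as a group action, $e_G$ is an idempotent endomorphism in $D^b(\M,\CC)$. The group relation $\xi_*\eta_* = (\xi\eta)_*$ holds on the nose, not merely up to homotopy, because the maps come from pullback along actual automorphisms of spaces. Idempotents split in $D^b(\M,\CC)$, since it is a bounded derived category with a t-structure and is therefore Karoubian (Balmer–Schlichting, or Le–Chen). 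So the object decomposes as $\mathrm{im}(e_G)\oplus \mathrm{im}(1-e_G)$. I take $\mathrm{im}(e_G)$ as the definition of the $G_\gamma$-invariants $(\cdot)^{G_\gamma}_\kappa$, and the desired split projection is the corresponding projection onto this summand. Equivalently, the splitting can be made explicit cohomology-sheaf-by-cohomology-sheaf: each $R^i\pi_*$ decomposes by characters of $\Gamma\times G_\gamma$, and the statement is compatible with that decomposition.

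The main subtlety I expect is making sure the invariants taken here agree with the notation $(\cdot)_\kappa^{G_\gamma}$ used later, for instance in the Yun-type comparison. The issue is that the $G_\gamma$ action should be the natural one, with no twist by $\kappa_G$. By Proposition \ref{prop: st = kappa}, after a finite cover a twist would only shift the $G_\gamma$-character by $\kappa_G$, so the convention matters. I will simply fix the natural pullback action, and the averaging argument then goes through without passing to any cover. The argument otherwise only requires that the actions are genuine actions on the spaces, and that was already established before Proposition \ref{prop: st = kappa}.
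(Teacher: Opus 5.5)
Your proposal is correct and takes the same approach as the paper. The paper's proof just observes that the commuting $\Gamma$- and $G_\gamma$-actions give a $G_\gamma$-action on the $\kappa$-isotypic summand, and leaves implicit that invariants under a finite group form a split summand in characteristic zero; you make this explicit through the averaging idempotent $\frac{1}{m}\sum_{\xi} \xi_*$ and the fact that idempotents split in $D^b(\M,\CC)$.
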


\begin{proof}
    As described previously, the actions of $\Gamma$ and $G_\gamma$ on $M_\pi(\C_\gamma)$ commute, and so the isotypic component $(R\pi_* \CC(M_\pi(\C_\gamma)))_\kappa$ is equipped with an action of (the finite group) $G_\gamma$. 
\end{proof}

\begin{prop} \label{prop: Gpi quotient}
We have an isomorphism
$$(R\pi_* \CC(M_\pi(\C_\gamma)))^{G_\gamma}_\kappa \cong (R\pi_* \CC(M_\gamma(\C)))_\kappa$$
in $D^b(\M, \CC)$.
\end{prop}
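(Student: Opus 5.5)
The plan is to relativize the fiberwise argument of \cite{MS}, which identifies $M_\gamma(C)$ with the quotient $M_\pi(C_\gamma)/G_\gamma$. The pushforward map $M_\pi(\C_\gamma) \to \widecheck{M}_{n,\ms L}(\C)$, $(E,\theta)\mapsto(\pi_{\gamma*}E,\pi_{\gamma*}\theta)$, is defined over $\M$. It is $G_\gamma$-invariant, since $\pi_{\gamma*}\xi^*E \cong \pi_{\gamma*}E$ canonically. It is also $\Gamma$-equivariant: by the projection formula, $\pi_{\gamma*}(E\otimes\pi_\gamma^*L_\delta)\cong \pi_{\gamma*}E\otimes L_\delta$, and $\Gamma$ acts on $\widecheck M_{n,\ms L}(\C)$ by tensoring with $L_\delta$.

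\textbf{Step 1: the image is $M_\gamma(\C)$.} Pushforwards are fixed by $\otimes L_\gamma$, because $\pi_{\gamma*}E\otimes L_\gamma\cong\pi_{\gamma*}(E\otimes\pi_\gamma^*L_\gamma)$ and $\pi_\gamma^*L_\gamma$ is trivial. So the map lands in $M_\gamma(\C)$. Conversely, a fixed point $(F,\phi)$ with $F\otimes L_\gamma\cong F$ descends to a Higgs bundle on $\C_\gamma$ whose pushforward is $(F,\phi)$. Checking this on each fiber is the fiberwise statement of \cite[\S1]{MS}, which I would cite. Since both spaces are flat (indeed smooth) over $\M$ and these identifications hold fiber by fiber, we get a map $\M$-morphism $M_\pi(\C_\gamma)\to M_\gamma(\C)$ which on each fiber is the quotient by the free $G_\gamma$-action. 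By stability, $F$ has only scalar automorphisms, so an $L_\gamma$-linearization is unique up to $\mu_m$; this gives freeness.

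\textbf{Step 2: the map is a relative quotient.} A finite, $G_\gamma$-invariant morphism that is fiberwise a free $G_\gamma$-quotient between spaces smooth over $\M$ is a $G_\gamma$-torsor, i.e.\ $M_\gamma(\C)\cong M_\pi(\C_\gamma)/G_\gamma$ over $\M$. Denote this map by $p$. Then
$$R\pi_*\CC(M_\gamma(\C)) \cong R\pi_*\bigl((p_*\CC)^{G_\gamma}\bigr)\cong \bigl(R\pi_*\CC(M_\pi(\C_\gamma))\bigr)^{G_\gamma}.$$
The first isomorphism holds because $\CC(M_\gamma(\C)) = (p_*\CC)^{G_\gamma}$ for a finite quotient in characteristic $0$. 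The second holds because taking invariants of a finite group is an exact idempotent projector, and so commutes with $R\pi_*$.

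\textbf{Step 3: take $\kappa$-isotypic parts.} This isomorphism is compatible with the $\Gamma$-actions, since $p$ is $\Gamma$-equivariant and the $\Gamma$ and $G_\gamma$ actions commute. Taking $\kappa$-isotypic components, again an exact projector, gives the claim.

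The main obstacle is Step 2: upgrading the fiberwise bijection to a relative isomorphism of coarse moduli spaces. Coarse spaces do not obviously commute with base change, and in general one only gets a bijective morphism. I would first try to deal with this as follows. Both spaces are smooth over $\M$, since their fibers are smooth by \cite[Proposition 1.1]{MS}. By Zariski's main theorem, a bijective, fiberwise-\'etale finite morphism from the quotient onto a normal target is an isomorphism. Even bijectivity alone would suffice for the sheaf-theoretic statement, since a finite universal homeomorphism induces an isomorphism on $R\pi_*\CC$.
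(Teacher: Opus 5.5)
Your proposal is correct and follows essentially the same route as the paper. The pushforward map $f\colon(E,\theta)\mapsto(\pi_{\gamma*}E,\pi_{\gamma*}\theta)$ is $\Gamma$-equivariant by the projection formula and is a quotient by the free $G_\gamma$-action onto $M_\gamma(\C)$ (fiberwise by \cite[(29)]{MS}), so $(f_*\CC)^{G_\gamma}=\CC(M_\gamma(\C))$ as $\Gamma$-equivariant sheaves; one then pushes forward to $\M$ and takes $\kappa$-isotypic parts. The paper passes from the fiberwise quotient statement to the relative one without comment, and your Step~2 observation (finiteness plus fiberwise bijectivity already gives a homeomorphism, hence an isomorphism of constant sheaves) is a reasonable way to justify that passage.
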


\begin{proof}
It follows from the fiberwise result \cite[(29)]{MS} that the pushforward map
$$f: M_\pi(\C_\gamma) \to M_\gamma(\C), (E, \theta) \mapsto (\pi_{\gamma *} E, \pi_{\gamma *} \theta)$$
is a quotient map by the (free) action of $G_\gamma$. Also note that $f$ is $\Gamma$-equivariant by the projection formula. Then we have $(Rf_* \CC(M_\pi(\C_\gamma)))^{G_\gamma} = \CC(M_\gamma(\C))$ as $\Gamma$-equivariant sheaves, and so pushing forward to $\M$ and taking $\kappa$-isotypic components, we recover the isomorphism
$$(R\pi_* \CC(M_\gamma(\C_\gamma)))^{G_\gamma}_\kappa = R\pi_* (Rf_*  \CC(M_\pi(\C_\gamma)))^{G_\gamma})_\kappa \cong (R\pi_* \CC(M_\gamma(\C)))_\kappa.$$
\end{proof}

\subsection{The Ng\^{o}-Yun correspondence} \label{subsec: Yun}
In this section we construct the relative version of the Ng\^{o}-Yun isomorphism for $\ms D$-Higgs bundles, when $\ms D$ is of relative degree $> 2g-2$ and even.

We recall the definitions of moduli spaces of $\ms D$-Higgs in Remark \ref{remark: D defs}. We also define the following:
\begin{itemize}
    \item Let $\ms C_{spec}^{\ms D}(\C,n) \subset |\ms D| \times_\M \AA^\ms D(\C, n)$ be the universal spectral curve defined, as usual, by the polynomial in the tautological section on $|\ms D|$ with coefficients given by $\AA^{\ms D}(\C,n)$. Let $p_\C: \ms C_{spec}^{\ms D}(\C,n) \to \C \times_{\M} \AA^{\ms D}(\C,n)$ be the finite map induced by the projection $|\ms D| \to \C$.
    \item Recall that the Hitchin morphism $M^{\ms D}_{n,d}(\C) \to \AA^{\ms D}(\C,n)$ identifies $M^{\ms D}_{n,d}(\C)$ as a compactified relative Jacobian of the family $\ms C^{\ms D}_{spec}(\C,n) \to \AA^{\ms D}(\C,n)$. We use $M^{\ms D}_{n,d}(\C)^\circ$ to denote the open subvariety parametrizing line bundles on the spectral curves.
    \item There is a $G_\gamma$ action on $\AA^{\ms D}(\C_\gamma, r)$ described as follows: the action of $G_\gamma$ on $\C_\gamma$ induces an action on $(\pi_{\gamma *} \ms O_{\C_\gamma}) \otimes \ms D \cong  \pi_{\gamma *} \pi_\gamma^* \ms D$, therefore on $\pi_* (\pi_\gamma^* \ms D)$, therefore on $\AA^{\ms D}(\C_\gamma, r)$.
    \item Let $\AA^\heartsuit_\pi(\C_\gamma) \subset \AA^{\ms D}(\C_\gamma, r)$ be the intersection of the locus where
    $\ms C_{spec}^{\ms D}(\C_\gamma, r) \to \AA^{\ms D}(\C_\gamma,r)$ is smooth and the locus where $G_\gamma$ acts freely. (This is a Zariski open subset, whose restriction to a fiber is the open subset $\AA^\heartsuit(\pi)$ of \cite[\S 3.2]{MS}.)
    \item By the fiberwise result \cite[Lemma 5.1]{HP}, the quotient $\AA_\pi^{\ms D}(\C_\gamma)/G_\gamma$ injects into $\AA_0(\C, n)$ with image $\AA_\gamma(\C)$, compatibly with the pushforward $M_\pi(\C_\gamma) \to M_\gamma(\C)$, which is also a $G_\gamma$ quotient map.
    Let $\AA^{\heartsuit}_\gamma(\C)$ be the image of $\AA^{\heartsuit}_\pi(\C_\gamma)$ under this map.
    \item The quotient map $\AA_\pi^{\ms D}(\C_\gamma) \to \AA_\gamma^\ms D(\C)$ has the following interpretation in terms of spectral curves \cite[\S 5.1]{HP}: Identifying $a \in \AA_\pi^{D}(C_\gamma)$ with the section $s_a \in H^0(C_\gamma, \pi_\gamma^* D^{\otimes r})$ whose zero set is the corresponding spectral curve, we have 
    $$a \mapsto \prod_{g \in G_\gamma} g^* s_a \in H^0(C_\gamma, \pi_\gamma^* D^{\otimes n})^{G_\gamma} = H^0(C, D^{\otimes n}).$$
    In particular, the $G_\gamma \times G_\gamma$-quotient map
    $$\AA_\pi^{\ms D}(\C_\gamma) \times_\M |\pi^*_\gamma \ms D| \to \AA_\gamma^\ms D(\C) \times_\M |\ms D|$$
    induces a morphism
    \begin{equation} \label{eq: normalization}
       \ms C_{spec}^\ms D(\C_\gamma, r)|_{\AA_\pi^\ms D(\C_\gamma)} \to \ms C^\ms D_{spec}(\C, n)|_{\AA_\gamma^\ms D(\C)} 
    \end{equation}
    over $\AA_\gamma^\ms D(\C)$.
    \item For the purposes of the next proposition, we define
    $$\ms C^\heartsuit_\pi := \ms C^{\ms D}_{spec}(\C_\gamma, r)|_{\AA_\pi^\heartsuit(\C_\gamma)} \,\,\,\,\textrm{and}\,\,\,\, \ms C^\heartsuit_\gamma := \ms C_{spec}^\ms D(\C, n)|_{\AA^\heartsuit_\gamma(\C)}$$
    and consider the natural morphisms
    $$p_\pi: \ms C^\heartsuit_\pi \to \C_\gamma \times_\M \AA_\pi^\heartsuit(\C_\gamma) \to \C \times_\M \AA_\pi^\heartsuit(\C_\gamma) \,\,\,\,\textrm{and}$$
    $$p_\gamma: \ms C^\heartsuit_\gamma \times_{\AA^\heartsuit_\gamma(\C)} \AA^\heartsuit_\pi(\C_\gamma) \to (\C \times_\M \AA^\heartsuit_\gamma(\C)) \times_{\AA^\heartsuit_\gamma(\C)} \AA^\heartsuit_\pi(\C_\gamma) = \C \times_\M \AA^\heartsuit_\pi(\C_\gamma).$$
\end{itemize}

\begin{prop} \label{prop: N}
    There is a line bundle $\ms N \in \Pic(\C)$ pulling back to
    \begin{equation} \label{eq: N}
        \det(p_{\gamma *} \ms O) \otimes \det(p_{\pi *} \ms O)^{-1} \in \Pic(\C \times_\M \AA^\heartsuit_\pi(\C_\gamma)).
    \end{equation}
\end{prop}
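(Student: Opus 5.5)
The plan is to compute both determinants explicitly, using the standard description of the pushforward of the structure sheaf of a spectral curve. The key point is that these pushforwards decompose canonically, over any base, into sums of powers of line bundles pulled back from $\C$ (or $\C_\gamma$), with no dependence on the point of the Hitchin base. We then read off $\ms N$ directly.

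\textbf{Step 1: the family spectral cover for $\C$.} Let $a$ be a point of $\AA^{\ms D}(\C,n)$, or more generally an $S$-point for any base $S$. The spectral curve is the zero locus in $|\ms D|$ of a monic polynomial
$$t^n + a_1 t^{n-1} + \cdots + a_n, \qquad a_i \in \pi_*\ms D^{\otimes i},$$
where $t$ is the tautological section. Hence, as an $\ms O_{\C \times_\M \AA}$-algebra,
$$p_{\C *}\ms O = \mathrm{Sym}(\ms D^{-1})/(t^n + a_1 t^{n-1} + \cdots + a_n).$$
The monomials $1, t, \ldots, t^{n-1}$ give a canonical isomorphism of $\ms O$-modules
$$p_{\C *}\ms O \cong \bigoplus_{i=0}^{n-1} \ms D^{-i}.$$
This holds in families, since it only uses that the polynomial is monic. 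Taking determinants gives
$$\det(p_{\C *}\ms O) \cong \ms D^{-n(n-1)/2},$$
pulled back from $\C$. Pushforward along a finite flat map commutes with base change, so the same holds after pulling back along $\AA^\heartsuit_\pi(\C_\gamma) \to \AA^\heartsuit_\gamma(\C) \subset \AA^{\ms D}(\C,n)$. This gives
$$\det(p_{\gamma *}\ms O) \cong \ms D^{-n(n-1)/2}$$
on $\C \times_\M \AA^\heartsuit_\pi(\C_\gamma)$. We must check that $\ms C^\heartsuit_\gamma \times_{\AA^\heartsuit_\gamma} \AA^\heartsuit_\pi$ really is the spectral curve of the pulled-back characteristic polynomial. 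This follows from the product formula $a \mapsto \prod_g g^* s_a$ recalled above.

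\textbf{Step 2: the composite cover $p_\pi$.} Apply the same argument on $\C_\gamma$, with $\pi_\gamma^*\ms D$ and degree $r$. The first map in $p_\pi$ has pushforward
$$\bigoplus_{i=0}^{r-1} \pi_\gamma^*\ms D^{-i}.$$
Pushing forward further along $\pi_\gamma$ and using the projection formula gives
$$p_{\pi *}\ms O \cong \bigoplus_{i=0}^{r-1} \ms D^{-i} \otimes \pi_{\gamma *}\ms O_{\C_\gamma}.$$
The bundle $\pi_{\gamma*}\ms O_{\C_\gamma}$ is the rank-$m$ bundle on $\C$ with its eigen-decomposition $\bigoplus_{j=0}^{m-1} L_\gamma^{-j}$ under $G_\gamma$; the sign convention is irrelevant here. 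It is in any case a vector bundle on $\C$. Therefore
$$\det(p_{\pi *}\ms O) \cong \ms D^{-mr(r-1)/2} \otimes \det(\pi_{\gamma*}\ms O_{\C_\gamma})^{\otimes r},$$
again pulled back from $\C$.

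\textbf{Step 3: conclusion and main difficulty.} Combining the two steps, we set
$$\ms N := \ms D^{-n(n-1)/2 + mr(r-1)/2} \otimes \det(\pi_{\gamma*}\ms O_{\C_\gamma})^{\otimes -r} \in \Pic(\C).$$
This pulls back to (\ref{eq: N}).

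The main point requiring care is Step 1's claim that the base-changed family $\ms C^\heartsuit_\gamma \times_{\AA^\heartsuit_\gamma} \AA^\heartsuit_\pi$ is cut out in $|\ms D|$ by a monic degree-$n$ polynomial. Equivalently, the relative Hitchin base and the universal spectral curve must commute with the base change along $\AA^\heartsuit_\pi(\C_\gamma) \to \AA^\heartsuit_\gamma(\C)$. Once that is in place, everything else is formal, because the decompositions above are canonical and not merely fiberwise.
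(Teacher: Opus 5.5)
Your argument is correct, but it takes a different route from the paper. The paper never computes either determinant. It notes that $\C\times_\M\AA^\heartsuit_\pi(\C_\gamma)$ is an open subset of $\C\times_\M\AA_\pi(\C_\gamma)$, which is an affine bundle over $\C$. Homotopy invariance and the localization sequence for Chow groups, together with smoothness (to identify $\CH^1$ with $\Pic$), then make the restriction $\Pic(\C)\to\Pic(\C\times_\M\AA^\heartsuit_\pi(\C_\gamma))$ surjective. So every line bundle there, in particular (\ref{eq: N}), comes from $\C$.

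That argument is shorter and does not care what the line bundle is, but it has three costs. It determines $\ms N$ only up to the kernel of that restriction, which is a subgroup of $\pi^*\Pic(\M)$ coming from divisors over the complement of the $\heartsuit$-locus. It uses smoothness and the affine-bundle structure. And it leaves the relative degree of $\ms N$, needed in Proposition \ref{prop: change of determinant}, to be imported from Lemma 3.5 of \cite{MS}.

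Your computation instead gives a canonical
$$\ms N=\ms D^{\otimes -n(n-r)/2}\otimes(\det\pi_{\gamma*}\ms O_{\C_\gamma})^{\otimes -r}.$$
It is valid over all of $\AA^{\ms D}_\pi(\C_\gamma)$, with no smoothness or $\heartsuit$ hypothesis. Since $\det\pi_{\gamma*}\ms O_{\C_\gamma}$ is a power of the torsion bundle $L_\gamma$, you can read off directly that $\ms N$ has relative degree $-\tfrac{n(n-r)}{2}\deg\ms D$, which is divisible by $n$ when $\deg\ms D$ is even.

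The step you flag as the main difficulty is in fact automatic. The universal spectral curve is the zero locus of the universal monic polynomial, and zero loci commute with arbitrary base change, as does pushforward along the affine map $p_\C$. The product formula only tells you which monic polynomial you get, not that it is monic.
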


\begin{proof}
    Since $p_\gamma$ and $p_\pi$ are both finite, flat, proper maps, the sheaves $p_{\gamma *} \ms O$ and $p_{\pi* }  \ms O$ are vector bundles on $\C \times_\M \AA^\heartsuit_\pi(\C_\gamma)$ 
    \cite[Theorem 25.1.6]{Vakil}, and so (\ref{eq: N}) indeed defines an element of $\Pic(\C \times_\M \AA^\heartsuit_\pi(\C_\gamma))$. Now it suffices to note that $\C \times_\M \AA^{\heartsuit}_\pi(\C_\gamma)$ is an open subset of $\C \times_\M \AA_\pi(\C_\gamma)$, which is an affine bundle over $\C$, and so we obtain a surjection of Chow groups $\CH^1(\C) \to \CH^1(\C \times_\M \AA^\heartsuit_\pi(\C_\gamma))$ \cite[Propositions 1.8, 1.9]{Fulton}. Since all spaces involved are smooth, this is equivalently a surjection $\Pic(\C) \to \Pic(\C \times_\M \AA^\heartsuit_\pi(\C_\gamma))$.
\end{proof}

\begin{prop} \label{prop: g equivariant}
There is a $G_\gamma$- and $\Gamma$-equivariant morphism
$$ g: \widecheck{M}_{n, \ms L}^{\ms D}(\C)^\circ \times_{\AA_0^{\ms D}(\C, n)} \AA^\heartsuit_\pi(\C_\gamma) \to M_{\pi, \ms L \otimes \ms N}^{\ms D}(\C_\gamma)^\circ $$
over $\AA^\heartsuit_\pi(\C_\gamma)$,
restricting over each point of $\M$ to the morphism of \cite[Corollary 3.7]{MS}.
\end{prop}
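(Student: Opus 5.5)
We will construct $g$ by relativizing the spectral-curve construction of \cite[\S 3]{MS} directly over $\AA^\heartsuit_\pi(\C_\gamma)$. Every step there is given by a universal formula (pullback, pushforward, tensoring with a fixed line bundle) and so makes sense in families. The only non-canonical input in the fiberwise construction is the line bundle used to correct the determinant. Proposition \ref{prop: N} supplies a global version of it, $\ms N$.

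\textbf{Step 1: compare spectral curves.} The morphism (\ref{eq: normalization}), base changed along $\AA^\heartsuit_\pi(\C_\gamma) \to \AA^\heartsuit_\gamma(\C)$, gives a finite morphism
$$\nu: \ms C^\heartsuit_\pi \to \ms C^\heartsuit_\gamma \times_{\AA^\heartsuit_\gamma(\C)} \AA^\heartsuit_\pi(\C_\gamma)$$
over $\C \times_\M \AA^\heartsuit_\pi(\C_\gamma)$, compatible with $p_\pi$ and $p_\gamma$. Fiberwise this is the partial normalization of \cite[\S 3.2]{MS}. Since $\ms C^\heartsuit_\pi$ is smooth over the base and $G_\gamma$ acts freely there, $\nu$ is birational on each fiber.

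\textbf{Step 2: define $g$.} A point of the source of $g$ is a line bundle $F$ on a fiber of $\ms C^\heartsuit_\gamma$. We define
$$g(F) := (\nu^* F \otimes \ms N',\ \text{tautological Higgs field}),$$
where $\ms N'$ is a suitable pullback of $\ms N$ to $\ms C^\heartsuit_\pi$. We view $\nu^*F \otimes \ms N'$ as a Higgs bundle on $\C_\gamma$ via pushforward along $\ms C^\heartsuit_\pi \to \C_\gamma \times_\M \AA^\heartsuit_\pi$. This is a morphism of moduli spaces because $\nu^*$ and $\otimes \ms N'$ are defined on families, so they act on the universal line bundle, which exists étale locally. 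The target lands in $M^{\ms D}_{\pi, \ms L \otimes \ms N}(\C_\gamma)^\circ$ by the determinant computation
$$\det(p_{\pi*}\nu^*F) = \det(p_{\gamma*}F) \otimes \det(p_{\gamma*}\ms O)^{-1} \otimes \det(p_{\pi*}\ms O).$$
This follows from Grothendieck–Riemann–Roch, or from the norm formalism for finite flat maps of the same degree $n$ onto $\C \times_\M \AA^\heartsuit_\pi$. Combined with the definition of $\ms N$ in Proposition \ref{prop: N}, this gives $\det = \ms L \otimes \ms N$. Twisting by $\ms N'$ also accounts for any normalization discrepancy. Over each point of $\M$, the construction literally reproduces \cite[Corollary 3.7]{MS}, because $\ms N$ restricts to the line bundle used there.

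\textbf{Step 3: equivariance.} For $\Gamma$, tensoring $F$ with $p^* L_\delta$ commutes with $\nu^*$, and the compositions of projections to $\C$ agree. So $g(F \otimes L_\delta) = g(F) \otimes \pi_\gamma^* L_\delta$ by the projection formula. For $G_\gamma$, the action on $\AA^\heartsuit_\pi(\C_\gamma)$ lifts to $\ms C^\heartsuit_\pi$, and $\nu$ is $G_\gamma$-invariant by construction of (\ref{eq: normalization}) as a $G_\gamma \times G_\gamma$-quotient. Hence $\xi^*\nu^*F = \nu^*F$ over the translated point. Since $\ms N$ comes from $\C$, its pullback is $G_\gamma$-invariant, which gives equivariance.

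\textbf{Main obstacle.} The hardest part is making the determinant identity hold globally rather than fiberwise. This requires checking that $\ms N'$, pulled back from $\C$ via Proposition \ref{prop: N}, is exactly the correction needed over all of $\C \times_\M \AA^\heartsuit_\pi$, and that no extra line bundle from $\AA^\heartsuit_\pi$ appears. If the determinant identity holds only up to a line bundle pulled back from the base, we would absorb it using the rigidification along the section $s$, as in the identification of $\Pic_{\C/\M}(\M)$ in the proof of the first proposition.
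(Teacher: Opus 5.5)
\textbf{The twist by $\ms N'$ in Step 2 is a genuine error.} Your overall route is the paper's. You realize the source and target of $g$ inside relative Picard schemes over $\AA^\heartsuit_\pi(\C_\gamma)$ and pull back along the map $\nu$ coming from (\ref{eq: normalization}). But in the paper, and in \cite[Corollary 3.7]{MS}, the map is $g=\nu^*$ and nothing else.

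The change of determinant is produced by the pullback itself. That is exactly what your identity $\det(p_{\pi*}\nu^*F)\cong\det(p_{\gamma*}F)\otimes\det(p_{\gamma*}\ms O)^{-1}\otimes\det(p_{\pi*}\ms O)$ says, and it computes the determinant of $\nu^*F$, not of $\nu^*F\otimes\ms N'$. Every pullback of $\ms N$ to $\ms C^\heartsuit_\pi$ is $p_\pi^*\ms N$. Once you tensor with it, the projection formula $p_{\pi*}(G\otimes p_\pi^*\ms N)\cong p_{\pi*}G\otimes\ms N$, applied to a rank $n$ bundle, multiplies the determinant by a further $\ms N^{\otimes n}$.

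This factor is nontrivial for $m>1$. Fiberwise, $\ms N$ has degree $-\delta$, where $\delta=p_a(C_a)-g(C'_{a'})=\deg\ms D\cdot mr^2(m-1)/2>0$ for the spectral curves $C'_{a'}\to C_a$. Consequently:
\begin{itemize}
\item your $g$ does not land in $M^{\ms D}_{\pi,\ms L\otimes\ms N}(\C_\gamma)^\circ$;
\item your claim that it "literally reproduces" \cite[Corollary 3.7]{MS} over a point of $\M$ is false, since that morphism is plain pullback along the normalization.
\end{itemize}

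\textbf{The fix is to delete the twist.} Proposition \ref{prop: N} does not supply a correction to insert by hand. Its role is to show that the discrepancy $\det(p_{\gamma*}\ms O)\otimes\det(p_{\pi*}\ms O)^{-1}$, a priori a line bundle on $\C\times_\M\AA^\heartsuit_\pi(\C_\gamma)$, is pulled back from $\C$. That makes the target a fixed-determinant space defined over $\M$.

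This also removes your "main obstacle." For finite flat $q$, the identity $\det(q_*G)\cong\Nm_q(G)\otimes\det(q_*\ms O)$ is an isomorphism of line bundles compatible with base change; Grothendieck--Riemann--Roch alone would only give it on Chern classes. Moreover, $\Nm_{p_\pi}(\nu^*F)\cong\Nm_{p_\gamma}(F)$ because $\nu$ is fiberwise birational. With the twist removed, your Steps 1 and 3 are a more explicit version of what the paper imports fiberwise from \cite[Corollary 3.7, Lemma 3.9]{MS}.

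A side remark on signs: with $\ms N$ normalized exactly as in Proposition \ref{prop: N}, your identity gives determinant $\ms L\otimes\ms N^{-1}$, not $\ms L\otimes\ms N$. This is consistent with the degree going up by $\delta$. The same sign slip is already in the statement. It is harmless, since Proposition \ref{prop: change of determinant} only uses that the relative degree of $\ms N$ is divisible by $n$.
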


\begin{proof}
First, we note that by \cite[(18)]{MS} the space $\AA^\heartsuit_\pi(\C_\gamma)$ is independent of the choice of $\ms L$ or $\ms L \otimes \ms N$; the same then holds for the spectral curves.
Identifying the source and target of $g$ as subvarieties of
$$\Pic_{\ms C_\gamma^\heartsuit/\AA_\gamma^\heartsuit(\C)} \times_{\AA_\gamma^{\heartsuit}(\C)} \AA^\heartsuit_\pi(\C_\gamma)
\cong \Pic_{\ms C_\gamma^\heartsuit \times_{\AA_\gamma^\heartsuit(\C)} \AA^\heartsuit_\pi(\C_\gamma)/\AA^\heartsuit_\pi(\C_\gamma)} \,\,\,\,\textrm{and} \,\,\,\,\Pic_{\ms C^\heartsuit_{\pi}/\AA^\heartsuit_{\pi}(\C_\gamma)}$$
respectively, we define $g$ as follows: Consider the pullback map on $\Pic_{-/\AA^\heartsuit_\pi(\C_\gamma)}$ induced by the morphism
$$\ms C^\heartsuit_\pi \to \ms C_\gamma^\heartsuit \times_{\AA^\heartsuit_\gamma(\C)} \AA_\pi^\heartsuit(\C_\gamma)$$
given by (\ref{eq: normalization}) on the first factor and the projection to the Hitchin base on the second. By the fiberwise result \cite[Corollary 3.7, Lemma 3.9]{MS}, the restriction of this morphism to the source of $g$ gives a $G_\gamma$- and $\Gamma$-equivariant morphism with the correct target.
\end{proof}

Consider the graph
$$ \textrm{Graph}(g) \subset (\widecheck{M}_{n, \ms L}^{\ms D}(\C)^\circ \times_{\AA_0^{\ms D}(\C, n)} \AA^\heartsuit_\pi(\C_\gamma)) \times_{\AA^\heartsuit_\pi(\C_\gamma)} M_{\pi, \ms L \otimes \ms N}^{\ms D}(\C_\gamma)^\circ.$$
We will define a cohomological correspondence using its closure
$$\Sigma_\M := \overline{\textrm{Graph}(g)} \subset (\widecheck{M}_{n, \ms L}^{\ms D}(\C) \times_{\AA_0^{\ms D}(\C, n)} \AA^{\ms D}_\pi(\C_\gamma)) \times_{\AA^\ms D_\pi(\C_\gamma)} M_{\pi, \ms L \otimes \ms N}^{\ms D}(\C_\gamma).$$

\begin{prop}
Let $f: X \to Y$ be a finite type morphism of schemes and $S \subset X$ a constructible set. Then there exists a dense open subset $U \subset Y$ such that $\overline{S} \times_Y y = \overline{S \times_Y y}$ for $y \in U$.
\end{prop}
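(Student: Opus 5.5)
The inclusion $\overline{S \times_Y y} \subseteq \overline{S} \times_Y y$ holds for every $y$, since $\overline{S} \times_Y y$ is closed in $X_y$ and contains $S \times_Y y$. So the content is the reverse inclusion over a dense open $U \subset Y$. I will assume, as in all our applications, that $X$ and $Y$ are Noetherian (e.g.\ of finite type over $\CC$). This makes constructible sets finite unions of locally closed sets and keeps the number of irreducible components finite.

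\textbf{Reductions.} First I reduce to $Y$ irreducible. Let $Y_1, \ldots, Y_k$ be the irreducible components of $Y$. Shrink $Y$ to the dense open complement of the pairwise intersections $Y_i \cap Y_j$; this is a disjoint union of irreducible opens, and the statement is local on $Y$. Next, replace $X$ by $Z := \overline{S}$ with its reduced structure. Let $Z_1, \ldots, Z_l$ be its irreducible components. If some $Z_i$ does not dominate $Y$, then $\overline{f(Z_i)}$ is a proper closed subset, and I remove it from $U$. Over the remaining open set every component dominates $Y$.

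\textbf{Key step.} Since $S$ is constructible and dense in $Z$, it contains a dense open subset $W \subset Z$, namely one meeting each $Z_i$. Set $B := Z \setminus W$. Then $B \cap Z_i$ is a proper closed subset of $Z_i$, so $\dim(B \cap Z_i) < \dim Z_i$. Write $e = \dim Y$ and $d_i = \dim Z_i - e$. By the theorem on generic fiber dimension (generic flatness together with EGA IV 9.2.6 / Stacks, Tag 05F7), after shrinking $U$ further the following hold for each $i$ and each $y \in U$:
\begin{itemize}
\item the fiber $(Z_i)_y$ is either empty or equidimensional of dimension $d_i$;
\item every irreducible component of $B \cap Z_i$ that dominates $Y$ has fibers of dimension $< d_i$;
\item the non-dominant components of $B \cap Z_i$ have been removed from $U$.
\end{itemize}
Consequently, for $y \in U$ we have $\dim (B \cap Z_i)_y < d_i$.

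\textbf{Conclusion.} Take $y \in U$ and an irreducible component $K$ of $(Z_i)_y$. Then $\dim K = d_i > \dim B_y \cap K$, so $K \not\subseteq B_y$. Hence $K \cap W_y$ is a nonempty open subset of the irreducible space $K$, and therefore dense in $K$. This gives $K \subseteq \overline{W_y} \subseteq \overline{S_y}$. Taking the union over all $K$ and all $i$ yields $Z_y = \overline{S}\times_Y y \subseteq \overline{S \times_Y y}$.

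The main obstacle I expect is the fiber-dimension step. It requires that, generically over $Y$, the fibers of each dominant $Z_i$ are \emph{equidimensional} of the expected dimension, and not merely of dimension at most $d_i$. This is exactly what rules out a component of $Z_y$ hiding inside the boundary $B_y$. I would handle it by citing the standard generic equidimensionality result for dominant morphisms of integral schemes of finite type.
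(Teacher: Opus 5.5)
Your proof is correct for schemes of finite type over a field, which is all the application to $\Sigma_\M$ requires. The key step, however, is done differently from the paper.

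Both arguments begin by choosing a dense open $W \subset \overline{S}$ contained in $S$. The paper then looks only at the generic point $\eta$ of each component of $Y$. Every generic point of an irreducible component of $(\overline{S})_\eta$ is a generic point of a component of $\overline{S}$, because any generization of a point lying over $\eta$ again lies over $\eta$. Such a point therefore lies in $W$, so $W_\eta$ is dense in $(\overline{S})_\eta$. A spreading-out lemma from the Stacks Project (Tag 054X) then carries this density from the generic fiber to the fibers over a dense open.

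You instead prove that spreading-out step directly. You play the generic equidimensionality of the fibers of each dominant $Z_i$ against the strictly smaller fiber dimension of $B \cap Z_i$; in effect you unpack the cited lemma. The paper's route is shorter and needs no dimension formula. Yours is self-contained modulo the fiber-dimension theorem, and it makes transparent why no component of $Z_y$ can hide inside $B_y$.

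One correction to your stated generality: Noetherianity alone does not give $d_i = \dim Z_i - \dim Y$. For a DVR $R$ with fraction field $K$, the dominant finite-type map $\operatorname{Spec} K \to \operatorname{Spec} R$ has $\dim Z - \dim Y = -1$, yet its generic fiber is a point. You can either keep to varieties, or set $d_i := \dim (Z_i)_\eta$. In the latter case, a dominant component $T$ of $B \cap Z_i$ has $T_\eta$ a proper closed subset of the irreducible finite-type $\kappa(\eta)$-scheme $(Z_i)_\eta$, so $\dim T_\eta < d_i$, and the rest of your argument goes through unchanged.
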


\begin{proof}
Because $S$ is constructible, it contains some dense open subset $S'$ of $\overline{S}$.
Also, if $\eta$ is the generic point of an irreducible component of $Y$ and $s$ the generic point of an irreducible component of $(\overline{S})_{\eta}$, then $s$ must in fact be a generic point of $\overline{S}$, thus contained in $S'$.
Now, by \cite[Tag 054X]{stacks-project} applied to each irreducible component of $Y$, there is a dense open subset $U \subset Y$ such that $S'_y \subset S_y$ is dense in $(\overline{S})_y$ for all $y \in U$, and so we have $\overline{S_y} = (\overline{S})_y$.
\end{proof}

\begin{corollary} \label{cor: restrict Sigma}
There exists an open subset $U \subset \M$ such that over each point of $U$, $\Sigma_\M$ restricts to the $\Sigma$ defined in \cite[(75)]{MS}.
\end{corollary}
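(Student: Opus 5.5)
We apply the preceding proposition with $X$ the ambient fiber product
$$X := (\widecheck{M}_{n, \ms L}^{\ms D}(\C) \times_{\AA_0^{\ms D}(\C, n)} \AA^{\ms D}_\pi(\C_\gamma)) \times_{\AA^\ms D_\pi(\C_\gamma)} M_{\pi, \ms L \otimes \ms N}^{\ms D}(\C_\gamma),$$
with $Y = \M$, $f$ the structure map, and $S = \textrm{Graph}(g)$. Every space in sight is built from Simpson's GIT moduli spaces, relative Hitchin bases and Picard schemes over $\M$ by fiber products and images, so $f$ is of finite type.

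\textbf{Constructibility of $S$.} By Proposition \ref{prop: g equivariant}, $g$ is a morphism between finite type $\M$-schemes. Its graph is closed in $\textrm{source}(g) \times_{\AA^\heartsuit_\pi(\C_\gamma)} M_{\pi, \ms L \otimes \ms N}^{\ms D}(\C_\gamma)^\circ$. That space is a locally closed subscheme of $X$, being cut out by the open conditions $(-)^\circ$ and $\AA^\heartsuit_\pi$. So $S$ is locally closed, hence constructible, in $X$. The proposition then gives a dense open $U \subset \M$ with $(\Sigma_\M)_y = \overline{S_y}$ for all $y \in U$.

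\textbf{Identifying the fiber closure.} It remains to check that, for $y \in U$ with fiber curve $C$, the set $S_y$ is exactly the graph used in \cite[(75)]{MS}.

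First, the fiber of the source of $g$ is $\widecheck{M}^D_{n,L}(C)^\circ \times_{\AA_0^D(C,n)} \AA^\heartsuit_\pi(C_\gamma)$. This holds because every construction commutes with base change to $y$. The relevant inputs are:
\begin{itemize}
\item the relative moduli spaces are defined so that base change recovers those of \cite{MS} (see the remark after Remark \ref{remark: D defs});
\item $\AA^\heartsuit_\pi(\C_\gamma)$ restricts to $\AA^\heartsuit(\pi)$;
\item $\ms N$ restricts to the line bundle $N$ of \cite{MS}. This follows from Proposition \ref{prop: N}, since $\det p_*\ms O$ commutes with base change for finite flat maps.
\end{itemize}
Second, $g$ restricts over $y$ to the morphism of \cite[Corollary 3.7]{MS}, which is part of the statement of Proposition \ref{prop: g equivariant}. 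Since the graph of a morphism commutes with base change, $S_y$ is the graph of the Maulik--Shen morphism. Its closure inside the fiber $X_y$ is then, by definition, the $\Sigma$ of \cite[(75)]{MS}.

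\textbf{Main obstacle.} The delicate point is matching ambient spaces. Maulik--Shen take the closure inside the fiber $X_y$, possibly with a scheme structure such as the reduced one. Since the closure of a subset of the closed subscheme $X_y \subset X$ is the same whether computed in $X_y$ or in $X$, the set-theoretic statement follows, and we take reduced structures on both sides. If \cite{MS} instead use the normalized convention, with $\ms L \otimes \ms N$ replaced by some fixed degree-$d$ bundle $L'$, we would shrink $U$ further and use the translation isomorphism by a line bundle to compare. I expect this bookkeeping to be the only non-formal part of the argument.
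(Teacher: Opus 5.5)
Your proposal is correct and follows the paper's (implicit) argument. The paper gives no separate proof: the corollary comes from applying the preceding proposition to the structure map of the ambient fiber product over $\M$ with $S = \mathrm{Graph}(g)$, together with the fact from Proposition \ref{prop: g equivariant} that $g$ restricts fiberwise to the morphism of \cite[Corollary 3.7]{MS}. Your additional checks (local closedness of the graph, base change of $\ms N$, and closure in $X_y$ versus in $X$) spell out details the paper leaves unstated.
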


Next, we check that the cohomological correspondence induced by a Borel-Moore fundamental class of $\Sigma_\M$ agrees fiberwise with the cohomological correspondence induced by $\Sigma$ in \cite[Theorem 3.10]{MS}.

\begin{prop} \label{prop: fund classes}
Consider the diagram of morphisms of complex varieties
\[\begin{tikzcd}
	& C \\
	X && Y \\
	& S \\
	& B
	\arrow[from=1-2, to=2-1, "a"]
	\arrow[from=1-2, to=2-3, "b"]
	\arrow[from=2-1, to=3-2, "c"]
	\arrow[from=2-3, to=3-2, "d"]
	\arrow[from=3-2, to=4-2, "e"]
\end{tikzcd}\]
where we assume that $C \to X \times_S Y$ and $c, d$ are proper; $e \circ c$ and $B$ are smooth; and $C$ is flat over $B$. Consider a morphism $\alpha: B' \to B$, where $B'$ is smooth, and let $X', Y', C', S'$ be the corresponding base changes, with maps $\alpha_X: X' \to X$, etc. Then the pullback by $\alpha_S^*$ of the morphism in $\Hom(Rd_* \QQ, Rc_* \QQ[2\dim X - 2\dim C])$ induced by a Borel-Moore
fundamental class of $C$ is induced by a fundamental class of $C'$.
\end{prop}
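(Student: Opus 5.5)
First I would make precise how a Borel--Moore fundamental class of $C$ yields the morphism in question. Write $h := e\circ c\circ a = e \circ d \circ b : C \to B$ and $\rho := (a,b): C \to X\times_S Y$. Since $C \to X\times_S Y$ is proper, $[C] \in H^{BM}_{2\dim C}(C) = H^{-2\dim C}(C, \omega_C)$ pushes forward to a class in $H^{-2\dim C}(X\times_S Y, \omega_{X\times_S Y})$. Because $e\circ c$ and $B$ are smooth, $X$ is smooth, so $\omega_X = \QQ[2\dim X]$. By the standard identification
$$\Hom(Rd_*\QQ, Rc_*\QQ[k]) \cong H^{k}(X\times_S Y, \, p_Y^{!}\QQ_Y \otimes p_X^*\QQ_X)$$
(using proper base change along the proper maps $c,d$ and $\omega_Y = p^!$ arguments), together with $\omega_{X\times_S Y} = p_X^! \omega_X = p_X^!\QQ[2\dim X]$, the class $[C]$ gives the desired element of $\Hom(Rd_*\QQ, Rc_*\QQ[2\dim X-2\dim C])$. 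So I would first set this up in a form functorial in the base: the construction is natural with respect to base change maps $\alpha_S^*Rd_* \to Rd'_*\alpha_Y^*$, which are isomorphisms by proper base change since $c,d$ are proper.

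Second, I would reduce the claim to a statement about fundamental classes: it suffices to show that the refined Gysin pullback $\alpha^![C] \in H^{BM}_{2\dim C - 2(\dim B-\dim B')}(C')$ equals $[C']$ (a fundamental class of $C'$), and that pulling back the correspondence along $\alpha_S$ corresponds to applying $\alpha^!$ to the cycle. The second point I would obtain by factoring $\alpha$ as the graph embedding $B' \hookrightarrow B'\times B$ (a regular embedding between smooth spaces) followed by the smooth projection $B'\times B\to B$. For the smooth projection, pullback of correspondences is flat pullback of cycles and clearly sends fundamental class to fundamental class. For the regular embedding, the specialization/Gysin map is computed by the cap product with the orientation class of $B'$ in $B'\times B$, which is compatible with $!$-pullback of the sheaf-theoretic class; note the shift $2\dim X - 2\dim C$ is preserved since both $\dim X$ and $\dim C$ drop by $\dim B - \dim B'$ (for $C$ this uses flatness, and for $X$ smoothness of $e\circ c$).

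Third, the key geometric input: since $C$ is flat over $B$, the Gysin pullback of $[C]$ along the regular embedding $B'\to B'\times B$ equals $[C\times_B B'] = [C']$ by Fulton's result that refined Gysin pullback commutes with flat pullback and that for flat families the Gysin pullback of the fundamental class is the fundamental class of the (scheme-theoretic) fiber (Fulton, Prop.~7.1 / Ex.~10.1.x), transported to Borel--Moore homology via the cycle class map, which is compatible with Gysin maps. I expect the main obstacle to be this compatibility step --- matching the sheaf-theoretic pullback $\alpha_S^*$ of the morphism with the Chow/Borel--Moore refined Gysin pullback of the cycle, keeping track of shifts and of the base change isomorphisms; I would handle it by working locally over $S$ and invoking the compatibility of the cycle class map with refined Gysin homomorphisms (Fulton, Ch.~19).
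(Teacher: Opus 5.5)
Your argument is correct in outline, but it takes a different route from the paper.

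\textbf{The paper's route.} The paper stays entirely inside the sheaf-theoretic correspondence formalism. It views $[C]$ as an element of $\Corr(C;\QQ_X[2(\dim X-\dim C)],\QQ_Y)=\Hom(b^*\QQ_Y,a^!\QQ_X[2(\dim X-\dim C)])$. It then invokes Yun's Lemma A.4.1 to reduce to showing that the correspondence pullback is again a fundamental class; here the pullback means $\alpha_C^*[C]$ followed by the base change map $\alpha_C^*a^!\to (a')^!\alpha_X^*$. Finally it uses the loose definition of a fundamental class: a map $\QQ_{C'}\to\DD_{C'}[-2\dim C']$ that is an isomorphism on a dense open subset. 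On the locus where $C'\to B'$ is smooth (the preimage of the smooth locus of $C\to B$, by flatness), that composite is an isomorphism. Generic smoothness is then used to make this locus dense.

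\textbf{Your route.} You pass to Chow groups. You identify the correspondence pullback with Fulton's refined Gysin map, via the graph factorization and the compatibility of the cycle class map with Gysin maps. You then compute $\alpha^![C]=[C']$ from flatness: the base-changed embedding $C'\hookrightarrow B'\times C$ is regular of the same codimension, so there is no excess bundle.

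\textbf{What each buys.} The paper's route is much lighter. It needs no cycle class map, and it avoids exactly the step you flag as the main obstacle, namely matching $\alpha_C^*a^!\to (a')^!\alpha_X^*$ with cap product by the orientation class. Your route gives a sharper conclusion. It identifies the pulled-back class as the class of the scheme-theoretic fiber, $[C']=\sum m_i[C'_i]$ with all $m_i>0$, for an arbitrary smooth $B'\to B$. The paper's density step instead needs $C'\to B'$ to be smooth on a dense open subset of every component, i.e.\ generically reduced fibers. This holds over a general point of $B$, which is all Corollary~\ref{cor: fund class} uses. It fails, for instance, over a point where the fiber of $C\to B$ is non-reduced.

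\textbf{One small correction.} The identification you want is $\Hom(Rd_*\QQ,Rc_*\QQ[k])\cong H^k(X\times_S Y,p_X^!\QQ_X)$, which follows from adjunction plus proper base change $c^*Rd_*\cong Rp_{X*}p_Y^*$. It is not the expression involving $p_Y^!\QQ_Y\otimes p_X^*\QQ_X$. Since you use $p_X^!$ in the very next line, this is harmless.
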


\begin{proof}
First, unraveling definitions, we have
$$[C] \in H^{BM}_{2 \dim C}(C) :=
\Hom(\QQ_C, \DD_C[-2\dim C]) $$
$$= \Hom(b^* \QQ_Y, a^! \DD_X[-2\dim C]) =: \Corr(C; \QQ_X[2(\dim X - \dim C)], \QQ_Y)$$
and similarly for $C'$, using that $X$ and $X'$ are smooth by the assumptions. (Here, we assume $X$ is irreducible; otherwise apply the argument to each connected component separately, with $\dim X$ varying as necessary.) We note that $\dim X - \dim C = \dim X' - \dim C'$ by the flatness assumption. Now, by \cite[Lemma A.4.1]{Yun} it suffices to check that the pullback
$$\alpha^*: \Corr(C; \QQ_X[2 (\dim X -  \dim C)], \QQ_Y) \to \Corr(C'; \QQ_{X'}[2 (\dim X - \dim C)], \QQ_{Y'})$$
takes a fundamental class of $C$ to a fundamental class of $C'$.

Recall that by definition, a morphism $\QQ_C \to \DD_C[-2\dim C]$ is a fundamental class if it is an isomorphism on the nonsingular locus $C^\circ$ of $C$ (or on any open subset dense among the components of top dimension). By definition \cite[\S A.4]{Yun} we see that
$$\alpha^* [C]: (b')^* \QQ_{Y'} =  \alpha_C^* b^* \QQ_Y \xrightarrow{\alpha_C^* [C]} \alpha_C^* a^{!} \QQ_X[2(\dim X - \dim C)]$$
$$\xrightarrow{*! \to !*} (a')^! \alpha_X^* \QQ_X[2(\dim X - \dim C)] = (a')^! \QQ_{X'}[(2(\dim X' - \dim C')]$$
is an isomorphism over $\alpha_C^{-1}(C^\circ) \cap (C')^\circ$, which contains the open subset on which $\pi_B': C' \to B'$ is smooth (which is the preimage of the subset on which $\pi_B$ is smooth, by flatness of $\pi_B$ \cite[Tag 02V4]{stacks-project}). Finally, by generic smoothness of
each component of $C' \to B'$, we see that this subset is dense in each component, as needed.
\end{proof}

\begin{corollary} \label{cor: fund class}
The restriction of the cohomological correspondence induced by a fundamental class of $\Sigma_\M$ to a general fiber is induced by a fundamental class of the restriction of $\Sigma_\M$ to the fiber.
\end{corollary}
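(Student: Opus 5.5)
The plan is to deduce Corollary \ref{cor: fund class} directly from Proposition \ref{prop: fund classes}, with base $B = \M$, and then use Corollary \ref{cor: restrict Sigma} to identify the restricted cycle with the $\Sigma$ of \cite[(75)]{MS}. I would set up the diagram as follows:
\[ C = \Sigma_\M, \qquad X = \widecheck{M}_{n, \ms L}^{\ms D}(\C) \times_{\AA_0^{\ms D}(\C, n)} \AA^{\ms D}_\pi(\C_\gamma), \qquad Y = M_{\pi, \ms L \otimes \ms N}^{\ms D}(\C_\gamma), \qquad S = \AA^{\ms D}_\pi(\C_\gamma). \]
Here $a, b$ are the two projections, $c, d$ are the relative Hitchin maps to $S$, and $e : S \to \M$ is the structure map.

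The hypotheses are checked in turn.
\begin{enumerate}
\item $\Sigma_\M \to X \times_S Y$ is a closed immersion, hence proper.
\item $c$ and $d$ are proper, since relative Hitchin maps are proper and properness is preserved under the base change along $\AA^{\ms D}_\pi(\C_\gamma) \to \AA_0^{\ms D}(\C,n)$.
\item $B = \M$ is smooth.
\item $e \circ c$ is smooth because $X$ is fiberwise smooth over $\M$. Fiberwise, $X$ is a base change of the smooth $\SL$-moduli space along the finite map $\AA_\pi \to \AA_\gamma \subset \AA_0$. If smoothness of this base change is in doubt, I would instead use $X = \widecheck{M}^{\ms D}_{n,\ms L}(\C)$ itself, with $C$ mapping to it. The diagram only requires $X$ to be smooth, because the correspondence is read on $X$.
\item Flatness of $\Sigma_\M$ over $\M$ is the one genuinely non-formal point. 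By generic flatness, $\Sigma_\M$ is flat over a dense open $U \subset \M$. Since the statement concerns only a \emph{general} fiber, I would shrink $\M$ to $U$, intersected with the open set of Corollary \ref{cor: restrict Sigma}.
\end{enumerate}

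Next, apply Proposition \ref{prop: fund classes} with $\alpha : B' = \{t\} \hookrightarrow U$ the inclusion of a point, which is smooth. The base changes $X', Y', S'$ are then the fiberwise spaces of \cite{MS}. The restriction $\alpha_S^*$ of the correspondence $Rd_*\QQ \to Rc_*\QQ[2\dim X - 2\dim \Sigma_\M]$ is induced by a fundamental class of $C' = \Sigma_\M \times_\M t$. By Corollary \ref{cor: restrict Sigma}, for $t \in U$ this fiber equals the $\Sigma$ of \cite[(75)]{MS}, which proves the claim.

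I expect the main obstacle to be checking smoothness of $X$ and $e \circ c$, together with flatness of $\Sigma_\M$. Flatness is handled by generic flatness, and smoothness of $X$ by a fiberwise argument via \cite{MS}. If the chosen $X$ is not smooth, the fix is to take the smooth space $\widecheck{M}^{\ms D}_{n,\ms L}(\C)$ as the source of the correspondence.
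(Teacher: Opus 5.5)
You have the roles of $X$ and $Y$ in Proposition \ref{prop: fund classes} reversed, and this matters. That proposition is asymmetric. The fundamental class is read as a map $\QQ_C \to \DD_C[-2\dim C] = a^!\QQ_X[2\dim X - 2\dim C]$, which is where smoothness of $X$ (i.e.\ of $e\circ c$) is used. The induced morphism then runs $Rd_*\QQ_Y \to Rc_*\QQ_X[2\dim X - 2\dim C]$, from $Y$ to $X$. The correspondence in the statement is the one used in Proposition \ref{prop: relative correspondence} (and \cite[Theorem 3.10]{MS}). It goes from the $\SL$ side to the $M_\pi$ side, with shift $2(\dim M_\pi - \dim \Sigma) = -2d^{\ms D}_\gamma$. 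So one must take $X = M^{\ms D}_{\pi, \ms L\otimes\ms N}(\C_{\gamma,B})$ and $Y = \widecheck{M}^{\ms D}_{n,\ms L}(\C_B)\times_{\AA^{\ms D}_0(\C_B,n)}\AA^{\ms D}_\pi(\C_{\gamma,B})$, which is the paper's choice.

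With your labeling, hypothesis (4) fails, and shrinking $\M$ cannot fix it because the problem lies in the fiber direction. Restricting the smooth total space $\widecheck{M}^{\ms D}_{n,\ms L}$ to the closed endoscopic locus $\AA^{\ms D}_\gamma$, and then base changing along the finite map $\AA^{\ms D}_\pi\to\AA^{\ms D}_\gamma$, does not give a smooth space. The spectral curves over $\AA^{\ms D}_\gamma$ are singular and stay singular along that locus, so the restricted family is singular at the non-locally-free points of the compactified Pryms.

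Your fallback $X = \widecheck{M}^{\ms D}_{n,\ms L}(\C)$ is smooth, but it has no map to $S = \AA^{\ms D}_\pi(\C_\gamma)$. Even if you replace $S$ by $\AA^{\ms D}_0(\C,n)$, it produces the transposed correspondence, from the $M_\pi$ side to the $\SL$ side, which is not the one the statement concerns.

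The repair is to swap $X$ and $Y$. The smoothness input then becomes the fiberwise smoothness of $M^{\ms D}_{\pi,\ms L\otimes\ms N}(\C_\gamma)$ over $\M$ \cite[p.~9]{MS}. The rest of your argument then coincides with the paper's proof: generic flatness to pass to a dense open $B$, $\alpha$ the inclusion of a point, and Corollary \ref{cor: restrict Sigma} to identify the fiber.
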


\begin{proof}
Let $B \subset \M$ be a dense open subset over which $\Sigma$ is flat,
$S := \AA_\pi^{\ms D}(\C_{\gamma, B})$, $X := M_{\pi, \ms L \otimes \ms N}(\C_{\gamma, B})$, $Y := \widecheck{M}_{n, \ms L}^{\ms D}(\C_B) \times_{\AA_0(\C_B,n)} \AA_\pi^{\ms D}(\C_{\gamma, B})$, $C := \Sigma_B$, and $B' = * \hookrightarrow B$. By construction $C \subset X \times_S Y$ is a closed subset, and $c,d$ are derived from Hitchin morphisms, therefore proper; also $B \subset \M$ is smooth, and $M_{\pi, \ms L \otimes \ms N}(\C_{\gamma, B}) \to B$ has smooth fibers \cite[p. 9]{MS}. Now we apply Proposition \ref{prop: fund classes}.
\end{proof}

Putting these together, we get:
\begin{prop} \label{prop: relative correspondence}
Assume $\ms D$ is of relative degree $> 2g-2$. Restricting to a dense open subset $U \subset \M$, there is an isomorphism
$$(Rh_* \CC(\widecheck{M}_{n, \ms L}^\ms D(\C_U)))_\kappa \cong (Rh_* \CC(M_{\gamma, \ms L \otimes \ms N}^{\ms D}(\C_U)))_\kappa[-2d_\gamma^\ms D] \in D^b(\AA_0^{\ms D}(\C_U, n), \CC),$$
where we use $h$ to denote the projections from $\widecheck{M}_{n, \ms L}^\ms D(\C_U)$ and $M_{\gamma, \ms L \otimes \ms N}^{\ms D}(\C_U)$ to the Hitchin base $\AA_0^\ms D(\C_U, n)$ and $d_\gamma^\ms D := \dim \AA_0^\ms D(\C,n) - \dim \AA_\gamma^\ms D(\C)$.
\end{prop}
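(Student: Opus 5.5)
The plan is to build a morphism in $D^b(\AA_0^{\ms D}(\C_U,n),\CC)$ from the relative correspondence $\Sigma_\M$, and then prove it is an isomorphism by restricting to fibers over points of $U$, where \cite[Theorem 3.10]{MS} applies.

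First, I take the Borel--Moore fundamental class $[\Sigma_\M]$. Since $\Sigma_\M$ lies in the fiber product over $\AA^{\ms D}_\pi(\C_\gamma)$ and both projections are proper (they come from Hitchin maps), $[\Sigma_\M]$ induces a cohomological correspondence. This gives a morphism
\[
Rh_*\CC(M_{\pi,\ms L\otimes\ms N}^{\ms D}(\C_{\gamma})) \to Rh_*\CC(\widecheck{M}^{\ms D}_{n,\ms L}(\C)\times_{\AA_0^{\ms D}(\C,n)}\AA^{\ms D}_\pi(\C_\gamma))[2(\dim X-\dim\Sigma_\M)]
\]
over $\AA^{\ms D}_\pi(\C_\gamma)$, in the notation of Proposition \ref{prop: fund classes}. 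The morphism $g$ of Proposition \ref{prop: g equivariant} is $G_\gamma$- and $\Gamma$-equivariant, so $\Sigma_\M$ is stable under $G_\gamma\times\Gamma$ and so is its fundamental class. I then push forward along the finite quotient map $\AA^{\ms D}_\pi(\C_\gamma)\to\AA^{\ms D}_\gamma(\C)\subset\AA^{\ms D}_0(\C,n)$, take $G_\gamma$-invariants, and take $\kappa$-isotypic parts.

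On the $M_\pi$ side, the analogue of Proposition \ref{prop: Gpi quotient} for $\ms D$-Higgs bundles identifies the $G_\gamma$-invariants with $Rh_*\CC(M^{\ms D}_{\gamma,\ms L\otimes\ms N}(\C))$. On the $\widecheck{M}$ side, the invariants of the pushforward from the base change to $\AA^{\ms D}_\pi$ recover $Rh_*\CC(\widecheck{M})$ restricted to $\AA^{\ms D}_\gamma$, because $G_\gamma$ acts trivially on that factor and the base change is a $G_\gamma$-quotient. Composing with restriction to $\AA^{\ms D}_\gamma$, or the adjoint of this map, and tracking the shift, which should equal $-2d^{\ms D}_\gamma$, gives a morphism between the two sides of the claimed isomorphism. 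I also need the $\kappa$-part of $Rh_*\CC(\widecheck{M})$ to be supported on $\AA^{\ms D}_\gamma$. This is Ng\^o's support statement, and fiberwise it is \cite[Theorem 3.10]{MS} itself. I would take $U$ small enough that this holds on each fiber, and verify it stalkwise.

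To see the morphism is an isomorphism it suffices to check on stalks, hence on the restriction to each fiber over $u\in U$. I shrink $U$ so that three things hold: $\Sigma_\M$ is flat over $U$; Corollary \ref{cor: restrict Sigma} applies, so $\Sigma_\M|_u=\Sigma$ of \cite[(75)]{MS}; and Corollary \ref{cor: fund class} applies, so the restricted correspondence is induced by a fundamental class of $\Sigma$. Proper base change along $u\hookrightarrow U$ commutes with $Rh_*$, with finite-group invariants and with isotypic projections. The restricted morphism is then exactly the map of \cite[Theorem 3.10]{MS} (for $\ms D$ of degree $>2g-2$), which is an isomorphism.

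I expect the main obstacle to be the bookkeeping in the first step. One has to check that the relative correspondence really descends along $\AA_\pi\to\AA_\gamma$ compatibly with the $G_\gamma$-invariants and the $\kappa$-isotypic projections, and that the degree shift and the identification of the determinant twist $\ms L\otimes\ms N$ agree with those in \cite{MS}. Once the morphism is correctly defined over $U$, the isomorphism statement is formal from the fiberwise theorem.
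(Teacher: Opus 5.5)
Your proposal follows the paper's proof essentially step for step. The shared steps are: a fundamental class of $\Sigma_\M$ over a generic open $U$ (flatness plus Corollaries \ref{cor: restrict Sigma} and \ref{cor: fund class}); pushforward along the $G_\gamma$-quotient $q_\AA\colon\AA^{\ms D}_\pi(\C_{\gamma,U})\to\AA^{\ms D}_\gamma(\C_U)$ followed by $G_\gamma$-invariants (projection formula on the $\widecheck{M}$ side, the quotient $M^{\ms D}_{\pi}\to M^{\ms D}_{\gamma}$ on the other); support of the $\kappa$-part on $\AA^{\ms D}_\gamma$; and a stalkwise check by proper base change against the single-curve result of \cite{MS}.

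However, your first display has the correspondence pointing the wrong way, and this is not cosmetic. Take $X=M^{\ms D}_{\pi,\ms L\otimes\ms N}(\C_{\gamma,U})$ and $Y=\widecheck{M}^{\ms D}_{n,\ms L}(\C_U)\times_{\AA^{\ms D}_0(\C_U,n)}\AA^{\ms D}_\pi(\C_{\gamma,U})$. Proposition \ref{prop: fund classes} then produces $Rd_*\CC_Y\to Rc_*\CC_X[2(\dim X-\dim\Sigma_U)]$, i.e.\ a map from the $\widecheck{M}$ side to the $M_\pi$ side. The construction rewrites $[\Sigma_U]\in\Hom(\CC_{\Sigma_U},\DD_{\Sigma_U}[-2\dim\Sigma_U])$ as $b^*\CC_Y\to a^!\CC_X[2\dim X-2\dim\Sigma_U]$ using $\DD_X\cong\CC_X[2\dim X]$, and this is exactly where smoothness of $M_\pi$ enters. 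Your arrow would instead need $\DD_Y\cong\CC_Y[2\dim Y]$. But $Y$ is essentially $h^{-1}(\AA^{\ms D}_\gamma)$, the part of $\widecheck{M}$ lying over a locus of singular spectral curves, which is in general singular. So the class only lands in Borel--Moore homology of $Y$, not in $Rh_*\CC(Y)$. The shift also exposes the problem: with your arrow and shift $-2d^{\ms D}_\gamma$, an isomorphism on $\kappa$-parts would say $(Rh_*\CC(\widecheck{M}))_\kappa\cong(Rh_*\CC(M_\gamma))_\kappa[2d^{\ms D}_\gamma]$, which is the opposite of the statement.

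Reversing the arrow fixes this and removes your hedge about adjoints. Precompose the correspondence with the restriction $Rh_*\CC(\widecheck{M})\to Rh_*\CC(\widecheck{M}|_{\AA^{\ms D}_\gamma})$, which is an isomorphism on $\kappa$-parts by the support statement; this is exactly what the paper does, and the shift is then $2(\dim M_\pi-\dim\Sigma_U)=-2d^{\ms D}_\gamma$. Note also that the correspondence only yields a map over $\AA^{\ms D}_\gamma$. The support statement is therefore a separate fiberwise input (the paper cites \cite[Corollary 2.2]{MS}), not something the correspondence gives you. By proper base change it holds over all of $\M$, so no shrinking of $U$ is needed for it.
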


\begin{proof}
Let $U \subset \M$ be a dense open set over which Corollaries \ref{cor: restrict Sigma} and \ref{cor: fund class} apply. Then choosing a fundamental class for $\Sigma_U$, we get a $G_\gamma$- and $\Gamma$-equivariant (by Proposition \ref{prop: g equivariant}) morphism
\begin{multline*}
    Rh_* \CC(\widecheck{M}_{n, \ms L}^{\ms D}(\C_U) \times_{\AA_0^{\ms D}(\C_U,n)} \AA_\pi^{\ms D}(\C_{\gamma,U}) ) \to \\
    Rh_* \CC(M_{\pi, \ms L \otimes \ms N}(\C_{\gamma, U}))[2(\dim M_{\pi, \ms L \otimes \ms N}(\C_{\gamma, U}) - \dim \Sigma_U)] \in D^b(\AA_\pi^{\ms D}(\C_{\gamma, U}), \CC),
\end{multline*}
where again we use $h$ to denote both (proper) morphisms to the Hitchin base $\AA_\pi^{\ms D}(\C_{\gamma, U})$.
(Note that the degree shift is $-2d_\gamma^\ms D$ as defined above because all the spaces involved are equidimensional over $U$, and so all codimensions match those in \cite[Theorem 3.10]{MS}.)

Pushing forward along the proper $G_\gamma$ quotient map $q_\AA: \AA_\pi^{\ms D}(\C_{\gamma, U}) \to \AA_\gamma^\ms D(\C_U)$ and taking invariants, we obtain a morphism
\begin{multline} \label{eq: MS iso}
    Rh_* \CC(\widecheck{M}_{n, \ms L}^{\ms D}(\C_U)|_{\AA_\gamma^\ms D(\C_U)}) \otimes (q_{\AA *} q_\AA^*\CC)^{G_\gamma} = Rh_* \CC(\widecheck{M}_{n, \ms L}^\ms D(\C_U)|_{\AA_\gamma^\ms D(\C_U)}) \\
    \to (Rh_{*} q_{M *} \CC(M^\ms D_{\pi, \ms L \otimes \ms N}(\C_{\gamma, U}))[-2d_\gamma^\ms D])^{G_\gamma} = Rh_* \CC(M^{\ms D}_{\gamma, \ms L \otimes \ms N}(\C_U))[-2d_\gamma^\ms D]
\end{multline}
of complexes on $\AA_\gamma^\ms D(\C_U) \subset \AA_0^\ms D(\C_U, n)$,
where in the last equality we use the fact that $q_M: M_{\pi, \ms L \otimes \ms N}^\ms D(\C_{\gamma,U}) \to M_{\gamma, \ms L \otimes \ms N}^\ms D(\C_U)$ is a $G_\gamma$-quotient map \cite[Proposition 7.1]{HT}. Now, the key point is that (\ref{eq: MS iso}) induces an isomorphism on $\kappa$-components: it suffices to check this on fibers, and by proper base change and the assumption on $U$, the fiberwise statement is exactly \cite[Corollary 3.11]{MS}.

To complete the proof, we note that
$(Rh_* \CC(\widecheck{M}_{n, \ms L}^\ms D(\C_U)))_\kappa$ is supported on $\AA_\gamma^\ms D(\C_U)$; since $h$ is proper, this follows from the fiberwise result, see, e.g., \cite[Corollary 2.2]{MS}. (In fact, this corollary gives a much stronger statement about supports; for our purposes, a refined form of the argument of \cite[Theorem 1.4]{HP} would suffice.) In other words, we have
$$(Rh_* \CC(\widecheck{M}^\ms D_{n, \ms L}(\C_U)))_\kappa \cong (Rh_* \CC(\widecheck{M}^\ms D_{n, \ms L}(\C_U)|_{\AA_\gamma^\ms D(\C_U)}))_\kappa.$$
Combining this with the restriction of (\ref{eq: MS iso}) to $\kappa$-components, we obtain the needed isomorphism.
\end{proof}

\begin{prop} \label{prop: change of determinant}
If $\ms D$ is of even degree, then after passing to a finite cover $S$ of $\M$, there is an isomorphism
$$(Rh_* \CC(M_{\gamma, \ms L \otimes \ms N}^\ms D(\C_S)))_\kappa \cong (Rh_* \CC(M_{\gamma, \ms L}^\ms D(\C_S)))_\kappa$$
in $D^b(\AA_\gamma^\ms D(\C_S), \CC)$.
\end{prop}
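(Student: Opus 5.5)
The plan is to relativize the fiberwise change-of-determinant argument of Maulik--Shen: realize the two spaces $M_{\gamma,\ms L\otimes\ms N}^{\ms D}$ and $M_{\gamma,\ms L}^{\ms D}$ as isomorphic via tensoring by a suitable line bundle, and then check that this isomorphism intertwines the $\Gamma$-actions up to something that preserves the $\kappa$-isotypic parts.

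\emph{Step 1: find a square root.} The line bundle $\ms N$ of Proposition \ref{prop: N} should have relative degree divisible by $n$ (at least after using that $\ms D$ has even degree; fiberwise $\deg N$ is expressed through $\deg D$ in \cite{MS}, and evenness is what makes it divisible by $n$). Granting this, I would look for $\ms N' \in \Pic(\C_S)$ with $(\ms N')^{\otimes n} \cong \ms N$ up to a line bundle pulled back from the base $S$. The obstruction lives in the relative Picard scheme: the $n$-th roots of $\ms N|_{C}$ form a torsor under $\Pic(C)[n]=\Gamma$. After a finite étale cover $S\to\M$ this torsor acquires a section. The section $s$ lets us rigidify, so such a section gives an honest $\ms N'\in\Pic(\C_S)$. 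Replacing $\ms N$ by $\ms N\otimes\pi^*(\text{base bundle})$ does not change the determinant condition relative to $\Pic_{\C/\M}$.

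\emph{Step 2: the tensor isomorphism.} Tensoring by $\ms N'$ gives an isomorphism
$$\widecheck{M}^{\ms D}_{n,\ms L}(\C_S)\xrightarrow{\sim}\widecheck{M}^{\ms D}_{n,\ms L\otimes\ms N}(\C_S),\qquad (E,\theta)\mapsto(E\otimes\ms N',\theta).$$
This map commutes with the Hitchin map, since $\theta$ is unchanged. It also commutes with the $\Gamma$-action, since tensor products commute. Hence it carries fixed loci of $L_\gamma$ to fixed loci, yielding an isomorphism $M^{\ms D}_{\gamma,\ms L}(\C_S)\cong M^{\ms D}_{\gamma,\ms L\otimes\ms N}(\C_S)$ over $\AA^{\ms D}_\gamma(\C_S)$. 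Being $\Gamma$-equivariant, it induces the desired isomorphism on $\kappa$-isotypic parts of $Rh_*\CC$.

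\emph{Main obstacle.} The expected difficulty is Step 1's degree claim, namely that $\deg(\ms N|_C)\equiv 0 \bmod n$. This must be extracted from the construction of $\ms N$ as $\det p_{\gamma*}\ms O\otimes\det p_{\pi*}\ms O^{-1}$ and from the fiberwise computation of \cite{MS}, where this uses that $\deg D$ is even. If only divisibility by $m$ (or by $r$) holds, I would instead try one of two alternatives. The first is to twist by an $r$-th root on $\C_\gamma$ before pushing forward. The second is to use a root of $\ms N$ together with a line bundle $\ms O(k\,s(\M))$ and a correction coming from the $\Gamma$-action. In the latter case one must verify that the correction preserves the $\kappa$-part, possibly after twisting by the character automorphism of Proposition \ref{prop: st = kappa}.
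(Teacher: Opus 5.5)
Your proposal is correct and follows essentially the same route as the paper. The paper takes divisibility of $\deg \ms N$ by $n$ for even $\deg \ms D$ directly from \cite[Lemma 3.5]{MS}, so your fallback alternatives are not needed. It then trivializes the $\Gamma$-torsor of $n$-th roots of $\ms N$ on a finite cover $S$ and tensors by a chosen root, which gives a $\Gamma$-equivariant isomorphism compatible with the Hitchin maps; you additionally make explicit that this isomorphism preserves the $L_\gamma$-fixed loci (your Step 1 heading should read ``$n$-th root'', not ``square root'').
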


\begin{proof}
If $\ms D$ is of even relative degree, then $\ms N$ is of relative degree divisible by $n$ \cite[Lemma 3.5]{MS}. In particular, there is a (nonempty) relative $\Pic^0_{\C/\M}[n] \cong \Gamma \times \M$ torsor of $n$th roots of $\ms N$ given by
$$\Pic^{\deg \ms N/n}_{\C/\M} \times_{\otimes^n, \Pic^{\deg \ms N}_{\C/\M}, \ms N} \M.$$
Let $S$ be a finite cover on which this torsor is trivialized and $\ms K$ a global section, considered as an element of $\Pic^{\deg \ms N/n}_{\C_S/S}(S)$. Then tensoring by $\ms K$ defines a $\Gamma$-equivariant isomorphism $\widecheck{M}_{n, \ms L}^\ms D(\C_S) \xrightarrow{\sim} \widecheck{M}_{n, \ms L \otimes \ms N}^\ms D(\C_S)$ over the two projections $h$.
\end{proof}

\subsection{The vanishing cycles trick}
In this section, we explain how to construct the isomorphisms of the last section in the non-$\ms D$-twisted case. Again, we start with definitions.

\begin{definition}
Given a family $X/S$ and line bundles $A, B \in \Pic(X)$, we define a stack $\widecheck{\M}_{n, A}^B(X/S)$ over $S$ as follows: the groupoid corresponding to $T \to S$ has objects $\{(E, \theta), \alpha: \det E \xrightarrow{\sim} A_T\}$, where $E$ is a bundle of rank $n$ on $X_T$ and $\theta: E \to E \otimes B_T$ a linear map with $\Tr \theta = 0$ such that the restriction of $(E, \theta)$ to each geometric fiber is (Gieseker) semistable, with morphisms given by isomorphisms of this data.
\end{definition}
We note the following three special cases:
\begin{enumerate}
    \item Given $\ms D \in \Pic(\C)$, the stack $\widecheck{\M}_{n, \ms L}^\ms D(\C/\M)$ is a $\mu_n$-gerbe over the coarse space $\widecheck{M}_{n, \ms L}^\ms D(\C)$. (Here, we use that $\ms L$ is of relative degree coprime to $n$, and so fiberwise semistable implies fiberwise stable.)
    The same holds for any morphism $T \to \M$.
    \item Recall that there is a section $s: \M \to \C$; we will consider $\widecheck{\M}^\ms D_{n, \ms L}(s(\M)/\M)$, together with the map
    $$\widecheck{ev}_s: \widecheck{\M}_{n, \ms L}^\ms D(\C/\M) \to \widecheck{\M}^\ms D_{n, \ms L}(s(\M)/\M)$$
    given by restricting along $s$.
    (Note that in this case, the fibers are of dimension 0, and so the semistability condition is vacuous.) 
    \item If $U \subset \M$ is an open subset on which $\ms L$ and $\ms D$ are trivialized, then we note that
    $$\widecheck{\M}^\ms D_{n, \ms L}(s(U)/U) \cong \widecheck{\M}^\ms O_{n, \ms O}(s(U)/U) \cong U \times \widecheck{\M}_{n, \ms O}^\ms O(*/*) \cong U \times [\mathfrak{sl}_n/\SL_n].$$
\end{enumerate}
We will also need the following more general definition:
\begin{definition}
    Given a finite morphism of families $X \xrightarrow{p} Y/S$, and given line bundles $A,B \in \Pic(Y)$, we define a stack $\M_{\pi, A}^B(X \to Y/S)$ as follows: the groupoid corresponding to $T \to S$ has objects $\{(E, \theta), \alpha: \det p_*E \xrightarrow{\sim} A_T\}$, where $E$ is a bundle of rank $r$ on $X_T$ and $\theta: E \to E \otimes p^*B_T$ a linear map with $\Tr p_* \theta = 0$ such that the restriction of $(E, \theta)$ to each geometric fiber is (Gieseker) semistable, with morphisms given by isomorphisms of this data.
\end{definition}
Again, we are interested in three cases:
\begin{enumerate}
    \item Given $\ms D \in \Pic(\C)$, the stack $\M_{\pi, \ms L}^\ms D(\C_\gamma \to \C/\M)$ is a $\mu_n$-gerbe over the coarse space $M_{\pi, \ms L}^\ms D(\C_\gamma)$, and similarly for any morphism $T \to \M$.
    \item We will consider the stack $\M_{\pi, \ms L}^\ms D(\pi_\gamma^{-1}(s(\M)) \to s(\M)/\M)$, together with the map
    $$ev^\pi_s: \M_{\pi, \ms L}^\ms D(\C_\gamma \to \C/\M) \to \M_{\pi, \ms L}^\ms D(\pi_\gamma^{-1}(s(\M)) \to s(\M)/\M)$$
    given by restricting to $\pi_\gamma^{-1}(s(\M)).$
    \item If $S \to \M$ is an open subset of a finite cover of $\M$ on which the line bundles $\ms L, \ms D$ and the $m$-fold cover $\pi_{\gamma}^{-1}(s(\M)) \to s(\M)$ are trivialized, then we have
    $$\M_{\pi, \ms L}^{\ms D}(\pi_\gamma^{-1}(s(S)) \to s(S)/S) \cong S \times \M_{\pi, \ms O}^\ms O(\sqcup_{i=1}^m * \to */*) \cong S \times [\h_\pi/H_\pi],$$
    where $\h_\pi \subset \mathfrak{sl}_n$ and $H_\pi \subset \SL_n$ are defined in \cite[\S 4.2]{MS}.
\end{enumerate}
Note that these recover the previous three examples in the case $\gamma = id$.

Given $S \to \M$ as described in case (3), we will consider the morphism
\begin{multline} \label{eq: mu}
    \mu: \M_{\pi, \ms L}^\ms D(\C_{\gamma, S} \to \C_S/S) \xrightarrow{ev^\pi_s} \M_{\pi, \ms L}^{\ms D}(\pi_\gamma^{-1}(s(S)) \to s(S)/S) \\ \cong S \times [\h_\pi/H_\pi] \xrightarrow{p_2} [\h_\pi/H_\pi] \xrightarrow{c_2} \AA^1,
\end{multline}
where $p_2$ is the projection onto the second factor, and $c_2(g)$ is the $t^{n-2}$ coefficient of the characteristic polynomial of $g \in \h_\pi \subset \mathfrak{sl}_n$.

\begin{prop} \label{prop: vanishing cycles}
    Let $\ms D_i :=\omega_{\C/\M} \otimes \ms O_{\C}(is(\M)) $ for $i \in \ZZ_{>0}$.
    Applying the vanishing cycles functor of $\mu$, we get an isomorphism
    $$\varphi_\mu(\QQ(\M_{\pi, \ms L}^{\ms D_i}(\C_{\gamma, S} \to \C_S/S))) \cong \QQ(\M_{\pi, \ms L}^{\ms D_{i-1}}(\C_{\gamma, S} \to \C_S/S))[-\dim \h_\pi]$$
    of complexes in $D^b(\M_{\pi, \ms L}^{\ms D_{i-1}}(\C_{\gamma, S} \to \C_S/S), \QQ).$ This isomorphism is $\Gamma$-equivariant. It is not $G_\gamma$-equivariant but rather shifts characters by $\QQ_-^{\otimes r(m+1)}$, where $\QQ_-$ is the sign representation $G_\gamma \to \{\pm 1\}, \xi \mapsto (-1)^{|\xi|}$.
\end{prop}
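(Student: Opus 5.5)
The plan is to reduce to the fiberwise vanishing-cycles statement of Maulik--Shen \cite[\S 4]{MS}, which is itself modeled on the standard ``dimensional reduction'' (Davison) argument, and to check that every step can be carried out relatively over $S$.

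\textbf{Step 1: realize $\mu$ as a function on a vector bundle.} Write $\M_i := \M_{\pi, \ms L}^{\ms D_i}(\C_{\gamma,S} \to \C_S/S)$. Since $\ms D_i = \ms D_{i-1}(s(S))$, restriction to the section gives a short exact sequence of Higgs-field deformation spaces:
$$0 \to \Hom(E, E \otimes \pi_\gamma^*\ms D_{i-1})_0 \to \Hom(E, E \otimes \pi_\gamma^*\ms D_i)_0 \to (\text{fiber at } \pi_\gamma^{-1}s) \cong \h_\pi \to 0.$$
First I will show that the forgetful map $\M_i \to \{\text{bundles } E\}$ exhibits $\M_i$, over the locus relevant for vanishing cycles, as a vector bundle. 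After dualizing via Serre duality, $\mu$ should become a function of the form $\langle \theta|_s, \theta'\rangle$: a quadratic form that is nondegenerate on the $\h_\pi$-direction and pairs it with the cokernel. I would follow \cite[\S 4]{MS} here. There one finds that the critical locus of $\mu$ on $\M_i$ is exactly the image of $\M_{i-1}$, namely the Higgs fields vanishing at $s$ in the $\h_\pi$-component. Moreover, $\mu$ is Morse--Bott along it, with normal rank $\dim \h_\pi$ (counted as a real quadratic form on a complex space of that dimension).

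\textbf{Step 2: relative Morse--Bott/Thom--Sebastiani.} For a function that is a nondegenerate quadratic form in normal directions of complex rank $N$, vanishing cycles are supported on the critical locus and are isomorphic to $\QQ[-N]$, twisted by the orientation (determinant) local system of the quadratic form. Applied smoothly over $S$, this gives the displayed isomorphism with $N = \dim \h_\pi$, since everything here is smooth over $S$ and the normal form holds in families. Over $S$ the point-fibers $\pi_\gamma^{-1}(s)$ and $\ms L, \ms D$ are trivialized, so $\mu$ is defined globally. The orientation local system is the remaining issue. It is the determinant of $\h_\pi$ (or a square root of its discriminant), and any potential monodromy from $S$ is killed by this trivialization.

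\textbf{Step 3: equivariance.} The group $\Gamma$ acts by tensoring with $\pi_\gamma^* L_\delta$. This action does not change $\theta$ and commutes with restriction to $s$, where it acts by scalars in $H_\pi$ centralizing $\h_\pi$. Hence $\mu$ is $\Gamma$-invariant and the orientation class is fixed, which gives $\Gamma$-equivariance. The group $G_\gamma$ instead acts on $\pi_\gamma^{-1}(s) = \sqcup_{i=1}^m *$ by cyclically permuting the $m$ points. On $\h_\pi$, which consists of $m$ blocks of $\mathfrak{gl}_r$ with total trace zero, it therefore permutes blocks. The character by which a generator acts on the orientation of the quadratic form is the determinant of this permutation action on $\h_\pi$ (of dimension $mr^2 - 1$), corrected by the Serre-dual piece.

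The main obstacle is this sign bookkeeping. An $m$-cycle permuting $r^2$ copies has determinant $(-1)^{(m-1)r^2}$, and the trace-zero correction and the pairing with the dual direction must combine to give $(-1)^{r(m+1)}$. Note that $r^2 \equiv r \pmod 2$ and $m - 1 \equiv m + 1 \pmod 2$, so $(-1)^{(m-1)r^2} = (-1)^{r(m+1)}$. I would first compute this sign on a single fiber, following the corresponding computation in \cite{MS}, and then observe that it is locally constant over $S$.
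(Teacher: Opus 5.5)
Your overall architecture can work: a relative Morse--Bott lemma plus an orientation/determinant computation is a legitimate alternative to the paper's route. As written, though, Step 1 rests on a wrong description of $\mu$, and the load-bearing input is missing.

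\textbf{Step 1.} By definition $\mu = c_2\circ p_2\circ ev^\pi_s$. It is just the quadratic form $c_2$ evaluated on $\theta|_{\pi_\gamma^{-1}(s)}\in\h_\pi$. There is no Serre-dual variable $\theta'$ and no pairing with a cokernel. Nor is $\M^{\ms D_i}_{\pi,\ms L}$ a vector bundle over a moduli of bundles near the critical locus. Points of $\M^{\ms D_{i-1}}_{\pi,\ms L}$ can have unstable underlying $E$. For fixed $E$, the map $H^0(\text{Higgs fields})\to\h_\pi$ need not be surjective, because the obstruction lies in an $H^1$.

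What you need is that $ev^\pi_s$, and hence $p_2\circ ev^\pi_s$, is smooth. This is \cite[Proposition 4.1]{MS} fiberwise, spread out over the smooth base $S$, and it uses deformations of $E$ as well as of $\theta$. Then $\mu$ is the pullback of a nondegenerate quadratic form along a smooth map. Its critical locus is therefore $(p_2\circ ev^\pi_s)^{-1}[0/H_\pi]=\M^{\ms D_{i-1}}_{\pi,\ms L}$, and it is Morse--Bott with normal bundle $ev^*\h_\pi$. The paper at this point simply applies smooth base change for $\varphi$ together with \cite[Lemma 4.3]{MS}.

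\textbf{Step 2.} The orientation local system is not trivialized by your choices over $S$. Those choices do not trivialize $E|_{\pi_\gamma^{-1}(s)}$, and the monodromy of the orientation system along loops in the moduli stack is given by $\det$ of the $H_\pi$-action on $\h_\pi$. That determinant is trivial because $H_\pi$ is connected, which is exactly the paper's remark.

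\textbf{Step 3.} The same confusion undermines the sign computation. Suppose there really were a dual direction paired hyperbolically with $\h_\pi$. Its contribution would be $\det(\sigma|_{\h_\pi})^{-1}$, and the $G_\gamma$-character shift would come out trivial, contradicting the statement. Saying the corrections ``must combine to give'' the answer reverse-engineers it rather than proving it.

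The correct principle is as follows. For a nondegenerate quadratic form $q$ on $V$ and $g\in O(q)$, $g$ acts on the one-dimensional $\tilde H^{\dim V-1}$ of the Milnor fiber, which is homotopic to $S^{\dim V-1}$, by $\det g$. This holds because $SO(q)$ is connected and a reflection has degree $-1$. In $\h_\pi\oplus\CC\cong(\CC^m)^{\oplus r^2}$ the removed trace line is $G_\gamma$-invariant. So the character is exactly $\det(\sigma|_{\h_\pi})=(-1)^{(m-1)r^2}=(-1)^{r(m+1)}$, with no corrections.

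Stated this way, your determinant shortcut is shorter than the paper's argument. The paper decomposes $\h_\pi$ into $G_\gamma$-eigenspaces and applies Thom--Sebastiani. It kills the $\h_i\oplus\h_{m-i}$ pieces by extending the action to $\GG_m$, and computes the sign only on $\h_{m/2}$, of dimension $r^2$. Your $\Gamma$-equivariance argument is fine: scalars act trivially on $\h_\pi$ by conjugation.
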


\begin{proof}
    First, the map $ev^\pi_s$ is smooth by the fiberwise result \cite[Proposition 4.1]{MS}, as is the projection $p_2: S \times [\h_\pi/H_\pi] \to [\h_\pi/H_\pi]$ since $S$ is smooth over $\M$. Then by smooth base-change for vanishing cycles \cite[Lemma 4.4(c)]{MS}, we have 
    \begin{equation} \label{eq: smooth pullback}
        \varphi_\mu(\QQ(\M_{\pi, \ms L}^{\ms D_i}(\C_{\gamma, S} \to \C_S/S))) \cong (p_2 \circ ev_s^\pi)^* \varphi_{c_2}(\QQ([\h_\pi/H_\pi])),
    \end{equation}
    where
    \begin{equation} \label{eq: vanishing hpi}
        \varphi_{c_2}(\QQ([\h_\pi/H_\pi])) \cong \QQ([0/H_\pi])[-\dim \h_\pi]
    \end{equation}
    by \cite[Lemma 4.3]{MS} and the fact that $H_\pi$ is connected and so acts trivially on the vanishing cycles sheaf. Now, as in \cite[Lemma 4.2]{MS}, note that
    $$\M_{\pi, \ms L}^{\ms D_{i-1}}(\C_{\gamma, S} \to \C_S/S) \subset \M_{\pi, \ms L}^{\ms D_{i}}(\C_{\gamma, S} \to \C_S/S)$$
    is described by the condition that $\theta$ has a zero along $s$, i.e., this is the preimage under $p_2 \circ ev_s^\pi$ of $[0/H_\pi] \subset [\h_\pi/H_\pi]$. Thus, combining (\ref{eq: smooth pullback}) and (\ref{eq: vanishing hpi}), we get
    $$\varphi_\mu(\QQ(\M_{\pi, \ms L}^{\ms D_i}(\C_{\gamma, S} \to \C_S/S))) \cong (p_2 \circ ev_s^\pi)^* \QQ([0/H_\pi])[-\dim \h_\pi]$$
    $$\cong \QQ(\M_{\pi, \ms L}^{\ms D_{i-1}}(\C_{\gamma, S} \to \C_S/S))[-\dim \h_\pi].$$

    For the $\Gamma$-equivariance, we note that the argument above applies equally well to $[\M^{\ms D_i}_{\pi, \ms L}(\C_{\gamma, S} \to \C_S/S)/\Gamma]$ (where $ev^\pi_s$ factors through this quotient since the line bundles $s^* L_\delta$ are trivial).

    For the compatibility with $G_\gamma$, we note that $p_2 \circ ev_s^\pi$ is $G_\gamma$-equivariant, where $G_\gamma$ acts by cyclically permuting the points of $\sqcup_{i=1}^m *$ compatibly with its action on the $m$ components of $\pi_\gamma^{-1}(s(S))$. It therefore suffices to prove the statement for $[\h_\pi/H_\pi] \to \AA^1$, or in fact, after taking another $G_\gamma$-equivariant smooth cover, for $c_2: \h_\pi \to \AA^1$. Following the argument of \cite[Proposition 4.6]{MS}, we consider the decomposition $\h_\pi \cong \oplus_{i=0}^{m-1} \h_i$ by characters of the dual group $\hat{G}_\gamma \cong \ZZ/m$. Since $c_2$ is a $G_\gamma$-invariant quadratic form on $\h_\pi$, it must decompose as a sum of functions $c_2^{i, m-i}$ on $\h_i + \h_{m-i}$, and so by the Thom-Sebastiani theorem, we have
    $$\varphi_{c_2}(\QQ(\h_\pi)) \cong \otimes \varphi_{c_2^{i, m-i}}(\QQ(\h_i + \h_{m-i})).$$
    Moreover, the $G_\gamma$ action on $\varphi_{c_2^{0,m}} \QQ(\h_0)$ is trivial, as the $G_\gamma$ action on $\h_0$ is trivial, and the $G_\gamma$ action on $\varphi_{c_2^{i, m-i}} \QQ(\h_i \oplus \h_{m-i})$ is trivial for $i \ne m-i$ since it can be extended to an action of the connected group $\GG_m$ with weights $(1, -1)$ compatible with the function $c_2^{i, m-i}$.

    It remains to determine the $G_\gamma$ action on $\varphi_{c_2}^{m/2, m/2}(\QQ(\h_{m/2}))$ in the case that $m$ is even. By \cite[Lemma 4.3]{MS}, the form $c_2$ is nondegenerate, and by $G_\gamma$-invariance, the same is true of $c_2^{m/2, m/2}$, i.e., this is of the form $x_1^2 + \ldots + x_{\dim \h_{m/2}}^2$ for some basis $\{x_i\}$ of $\h_{m/2}$.
    Recall that for $x^2: \AA^1 \to \AA^1$ with $G_\gamma$ acting on the source via the sign character, we have
    $$\phi_{x^2}(\QQ(\AA^1)) \cong \QQ(0)[-1] \otimes \QQ_-$$
    since multiplying the coordinate of $\AA^1$ by $-1$ exchanges the two points of a nearby fiber $\{\pm \lambda\}$. Again by Thom-Sebastiani, this implies that
    $$\varphi_{c_2^{m/2,m/2}}(\QQ(\h_{m/2})) \cong \QQ(0)[-\dim \h_{m/2}] \otimes \QQ_-^{\otimes \dim \h_{m/2}}.$$
    Since $\h_\pi \oplus \CC_{triv} \cong V^{\oplus r^2}$, where $V$ is the standard $\CC$-representation of $G_\gamma$, we see that $\dim \h_{m/2} = r^2$. In other words, taking vanishing cycles introduces a nontrivial multiple of the sign character exactly when $m \equiv 0, r \equiv 1$ mod 2. This completes the argument that
    $$\varphi_{\mu}(\QQ(\M^{\ms D_i}_{\pi, \ms L}(\C_{\gamma, S} \to \C_S/S))) \cong \QQ(\M^{\ms D_{i-1}}_{\pi, \ms L}(\C_{\gamma, S} \to \C_S/S)) \otimes \QQ_-^{\otimes r(m+1)}$$
    and now we note that tensoring by any character $\QQ_\chi$ of $G_\gamma$ commutes with taking vanishing cycles.
\end{proof}

\begin{corollary} \label{cor: last step}
After passing to an appropriate Zariski open subset $S$ of a finite cover of $\M$, there is an isomorphism
$$R\pi_* \CC(\widecheck{M}_{n, \ms L}(\C_S))_\kappa \cong R\pi_* \CC(M_\gamma(\C_S))_\kappa[-2d_\gamma]$$
of complexes in $D^b(S, \CC)$, where $d_\gamma$ is defined to be $d_\gamma^{\omega_{\C/\M}}$ from Proposition \ref{prop: relative correspondence}.
\end{corollary}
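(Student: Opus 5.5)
We follow the fiberwise strategy of \cite[\S 4]{MS}: pass from the $\ms D_i$-twisted isomorphism of Proposition \ref{prop: relative correspondence} (for $i$ large and even degree) down to $\ms D_0 = \omega_{\C/\M}$ by repeatedly applying the vanishing cycles functor of $\mu$, using Proposition \ref{prop: vanishing cycles} at each step.

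\textbf{Starting point.} Fix $i$ large enough that $\ms D_i = \omega_{\C/\M}(is(\M))$ has relative degree $>2g-2$ and even degree. Combining Propositions \ref{prop: relative correspondence} and \ref{prop: change of determinant}, after passing to a Zariski open subset $S$ of a finite cover of $\M$ we get
$$(Rh_*\CC(\widecheck M^{\ms D_i}_{n,\ms L}(\C_S)))_\kappa \cong (Rh_*\CC(M^{\ms D_i}_{\gamma,\ms L}(\C_S)))_\kappa[-2d^{\ms D_i}_\gamma].$$
We shrink $S$ further so that $\ms L$, $\ms D_j$ and the cover $\pi_\gamma^{-1}(s(S))\to s(S)$ are trivialized, and so that $\mu$ is defined on both sides.

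\textbf{Descending from $\ms D_i$ to $\ms D_{i-1}$.} Lift the isomorphism above to the $\mu_n$-gerbes $\widecheck\M^{\ms D_i}_{n,\ms L}$ and $\M^{\ms D_i}_{\gamma,\ms L}$; rationally the gerbe does not change cohomology. We then want the isomorphism to be one of complexes on the Hitchin base $\AA^{\ms D_i}_0$ over which $\mu$ factors. The function $\mu$ is the $c_2$ of the Higgs field at $s$, which is determined by the characteristic polynomial. Hence $\mu = \bar\mu\circ h$ for a function $\bar\mu$ on the Hitchin base, and the same function is used on both sides, since the pushforward to $\C$ preserves the characteristic polynomial at $s$.

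Since $h$ is proper, $\varphi_{\bar\mu} Rh_* \cong Rh_*\varphi_\mu$. Applying $\varphi_{\bar\mu}$ to both sides and using Proposition \ref{prop: vanishing cycles} twice, once for $\gamma=\mathrm{id}$ and once for $M_\pi$ followed by $G_\gamma$-invariants via $M_\pi\to M_\gamma$, yields an isomorphism of the same shape for $\ms D_{i-1}$ with shifts changed by $\dim\mathfrak{sl}_n$ and $\dim\h_\pi$. The difference of these shifts is $\dim\mathfrak{sl}_n-\dim \h_\pi = (n^2-1)-(mr^2-1)$, which should match $2(d^{\ms D_i}_\gamma - d^{\ms D_{i-1}}_\gamma)$ after the standard dimension count of \cite[\S 4]{MS}.

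The $\Gamma$-equivariance of Proposition \ref{prop: vanishing cycles} preserves $\kappa$-components. The $G_\gamma$-character shift by $\QQ_-^{\otimes r(m+1)}$ must be handled when we take $G_\gamma$-invariants on the $M_\pi$ side. As in \cite[Proposition 4.6]{MS}, we expect this to be absorbed by the $\kappa_G$-twist from Proposition \ref{prop: st = kappa}: the invariant part for $\ms D_i$ maps onto an isotypic part for $\ms D_{i-1}$ that is identified with the invariants by an automorphism on a finite cover. This requires one further finite cover, which is harmless.

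\textbf{Conclusion and main obstacle.} Iterating $i$ times brings us to $\ms D_0=\omega_{\C/\M}$, after passing to a finite cover and shrinking $S$ at each stage. Pushing forward along the Hitchin base to $S$ then gives the claimed isomorphism with shift $-2d_\gamma$.

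The main obstacle is making the vanishing cycles step honestly relative. First, Proposition \ref{prop: relative correspondence} is only stated on a dense open of the base, and we must ensure this open still meets the critical locus $\mu=0$ relevant to $\ms D_{i-1}$. I would arrange this by first shrinking only in the $\M$-direction: the open $U$ there is pulled back from $\M$, so it is compatible with $\mu$. Second, the isomorphism must commute with $\varphi$, which uses that it lives on the Hitchin base rather than on $S$. I would also check the sign-character bookkeeping carefully against the case $m$ even, $r$ odd.
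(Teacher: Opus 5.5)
\textbf{Where the proposal fails: the $G_\gamma$-twist.} Most of your outline is the paper's argument. You start from an even-degree twist, combine Propositions \ref{prop: relative correspondence} and \ref{prop: change of determinant}, and pass to the $\mu_n$-gerbes. You use that $\mu$ factors through the Hitchin base and that $\varphi$ commutes with proper pushforward. Your shift count $\dim\mathfrak{sl}_n-\dim\h_\pi=n^2-mr^2=2(d^{\ms D_i}_\gamma-d^{\ms D_{i-1}}_\gamma)$ is also correct.

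The problem is how you handle the $G_\gamma$-twist $\QQ_-^{\otimes r(m+1)}$ from Proposition \ref{prop: vanishing cycles}. You want each single application of $\varphi_\mu$ to give an isomorphism ``of the same shape'' at level $\ms D_{i-1}$, with the twist absorbed by the automorphism of Proposition \ref{prop: st = kappa}. That cannot work. The automorphism shifts characters of $\Gamma\times G_\gamma$ by $(\kappa,\kappa_G)$ \emph{jointly}. Any power of it that keeps the $\Gamma$-character equal to $\kappa$ is a shift by a multiple of $m(\kappa,\kappa_G)=0$, so it never moves the $G_\gamma$-character by itself.

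Concretely, take $m$ even and $r$ odd. One application of $\varphi_\mu$ to $(Rh_*\CC(\M^{\ms D_i}_{\pi,\ms L}))^{G_\gamma}_\kappa$ lands in the $(\kappa,\QQ_-)$-isotypic summand of $Rh_*\CC(\M^{\ms D_{i-1}}_{\pi,\ms L})$, not the $(\kappa,\mathrm{triv})$-summand. The components of $M_\pi$ form a torsor under $(\Gamma\times G_\gamma)/K$, where $K$ is the stabilizer of a component. So two isotypic summands are matched by permuting components only when their characters agree on $K$. Here they do not, and nothing identifies the $(\kappa,\QQ_-)$-part with $(Rh_*\CC(\M^{\ms D_{i-1}}_{\gamma,\ms L}))_\kappa$. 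Your inductive step is therefore unjustified exactly in the case you flagged, and the proposed repair does not fix it.

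\textbf{The fix.} Do not try to restore $G_\gamma$-invariants after each step. The twist has order at most $2$, so it cancels after an even number of steps. The paper does this as follows:
\begin{itemize}
\item It starts at $\ms D_2$, the smallest $\ms D_i$ of even degree $>2g-2$.
\item It keeps the $G_\gamma$-invariants inside the isomorphism (\ref{eq: pre vanishing}) and applies $\varphi_\mu$ exactly twice.
\item Since $\varphi_\mu$ commutes with projecting onto a $\Gamma\times G_\gamma$-isotypic summand, the two copies of $\QQ_-^{\otimes r(m+1)}$ multiply to the trivial character.
\item Hence the $(\kappa,\mathrm{triv})$-part at level $\ms D_2$ goes to the $(\kappa,\mathrm{triv})$-part at level $\ms D_0=\omega_{\C/\M}$. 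This is $(Rh_*\CC(\M_{\gamma,\ms L}))_\kappa$ via the $G_\gamma$-quotient $M_\pi\to M_\gamma$.
\end{itemize}
With your even $i$ the same argument works if you compose all $i$ steps and use only at the end that $\QQ_-^{\otimes r(m+1)i}$ is trivial. Nothing is gained by starting above $i=2$.
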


\begin{proof}
By Propositions \ref{prop: relative correspondence} and \ref{prop: change of determinant}, since $\ms D_2$ is of relative degree $2g$, after passing to a Zariski open subset $S'$ of a finite cover of $\M$, there is an isomorphism
$$(Rh_*\CC(\widecheck{M}^{\ms D_2}_{n, \ms L}(\C_{S'}))_\kappa \cong (Rh_* \CC(M^{\ms D_2}_{\gamma, \ms L \otimes \ms N}(\C_{S'})))_\kappa[-2d_\gamma^{\ms D_2}] \cong (Rh_* \CC(M^{\ms D_2}_{\gamma, \ms L}(\C_{S'})))_\kappa[-2d_\gamma^{\ms D_2}]$$
in $D^b(\AA_\gamma^{\ms D_2}(\C_{S'}), \CC)$. Moreover, the same holds when the coarse spaces $\widecheck{M}$ and $M_\gamma$ are replaced by the $B\mu_n$-gerbes $\widecheck{\M}$ and $\M_\gamma$. In particular, we see that
\begin{equation} \label{eq: pre vanishing}
(Rh_*\CC(\widecheck{\M}^{\ms D_2}_{n, \ms L}(\C_{S'})))_\kappa  \cong (Rh_* \CC(\M^{\ms D_2}_{\pi, \ms L}(\C_{\gamma,S'})))_\kappa^{G_\gamma}[-2d_\gamma^{\ms D_2}].
\end{equation}

Now, by passing to a further open subset of a finite cover, say $S$, we may assume that the finite cover $\pi_\gamma^{-1}(s(S)) \to s(S)$ and the line bundles $s^* \ms L$ and $s^* \ms O_{\C_S}(s(S))$ are trivialized. Then the function $\mu$ of (\ref{eq: mu}) is well-defined, and applying $\varphi_\mu$ twice to each side of (\ref{eq: pre vanishing}), we get the following:

For the left side of the equation, note that we have a $\Gamma$-equivariant isomorphism
$$\varphi_\mu \CC(\widecheck{M}^{\ms D_i}_{n, \ms L}(\C_S)) \cong \CC(\widecheck{M}^{\ms D_{i-1}}_{n, \ms L}(\C_S))[-\dim \mathfrak{sl}_n]$$
by Proposition \ref{prop: vanishing cycles} applied in the case $\gamma = id$. In particular,
$$\varphi_\mu(\varphi_\mu \CC(\widecheck{\M}^{\ms D_2}_{\gamma, \ms L}(\C_S))) \cong \CC(\widecheck{\M}^{\ms D_0}_{\gamma, \ms L}(\C_S))[-2\dim \mathfrak{sl}_n] = \CC(\widecheck{\M}_{\gamma, \ms L}(\C_S))[-2\dim \mathfrak{sl}_n].$$
Since $\mu$ factors through $h$ and vanishing cycles commute with proper pushforward, we obtain
\begin{equation} \label{eq: LHS vanishing cycles}
    \varphi_\mu(\varphi_\mu (Rh_*\CC(\widecheck{\M}^{\ms D_2}_{n, \ms L}(\C_S)))_\kappa)) \cong (Rh_*\CC(\widecheck{\M}_{n, \ms L}(\C_S)))_\kappa[-2\dim \mathfrak{sl}_n].
\end{equation}

On the right side of the equation, we have a $\Gamma$-  and $G_\gamma$-equivariant isomorphism
$$\varphi_\mu(\varphi_\mu(\CC(\M^{\ms D_2}_{\pi, \ms L}(\C_{\gamma, S})))) \cong \CC(\M_{\pi, \ms L}(\C_{\gamma, S}))[-2\dim \h_\pi]$$
by Proposition \ref{prop: vanishing cycles}, where the $G_\gamma$-equivariance follows from the fact that the two factors of $\QQ_-^{\otimes r(m+1)}$ combine to give a trivial character shift. As before, we have
\begin{multline} \label{eq: RHS vanishing cycles}
    \varphi_\mu(\varphi_\mu(Rh_* \CC(\M^{\ms D_2}_{\pi, \ms L}(\C_{\gamma, S})))_\kappa^{G_\gamma}) \cong (Rh_* \CC(\M_{\pi, \ms L}(\C_{\gamma, S})))_\kappa^{G_\gamma}[-2\dim \h_\pi] \\ = (Rh_* \CC(\M_{\gamma, \ms L}(\C_S)))_\kappa[-2\dim \h_\pi].
\end{multline}

Thus, applying $\varphi_\mu$ twice to each side of the isomorphism (\ref{eq: pre vanishing}), from (\ref{eq: LHS vanishing cycles}) and (\ref{eq: RHS vanishing cycles}) we get
$$(Rh_* \CC(\widecheck{\M}_{\gamma, \ms L}(\C_S)))_\kappa \cong (Rh_* \CC(\M_{\gamma, \ms L}(\C_S)))_\kappa[2(\dim \mathfrak{sl}_n - \dim \h_\pi - d_\gamma^{\ms D_2})].$$
The needed statement follows from switching back to the coarse spaces and pushing forward to $S$, since by the fiberwise result \cite[\S 4.2]{MS} we have
$\dim \mathfrak{sl}_n - \dim \h_\pi - d_\gamma^{\ms D_2} = -d_\gamma$.
\end{proof}

\begin{theorem} \label{thm: relative}
There exists a Zariski open subset of a finite cover of $\M$, say $W$, over which there is a morphism with right inverse
$$R\pi_* \CC(M_{r,d}(\C_{\gamma, W})) \rightarrow (R\pi_* \CC(\widecheck{M}_{n, \ms L}(\C_W))_\kappa[2d_\gamma] \in D^b(W, \CC).$$
\end{theorem}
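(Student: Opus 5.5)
We will obtain the map by composing the relative results already established, each of which gives either an isomorphism or a split projection in the derived category of a suitable base. Throughout, we pass freely to Zariski open subsets of finite covers of $\M$. Each of these operations can be performed finitely many times in succession, and all earlier statements are stable under such base change (by proper and smooth base change). So we fix $W$ to be a common refinement of the bases produced by Proposition \ref{prop: st = kappa} and Corollary \ref{cor: last step}.

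\textbf{The chain of maps.} Over $W$ we form the following composite:
\begin{align*}
R\pi_*\CC(M_{r,d}(\C_{\gamma,W})) &\cong R\pi_*\CC(M_{1,0}(\C_W))\otimes (R\pi_*\CC(M_\pi(\C_{\gamma,W})))^\Gamma \\
&\to (R\pi_*\CC(M_\pi(\C_{\gamma,W})))^\Gamma \\
&\cong (R\pi_*\CC(M_\pi(\C_{\gamma,W})))_{\kappa} \\
&\to (R\pi_*\CC(M_\pi(\C_{\gamma,W})))^{G_\gamma}_\kappa \\
&\cong (R\pi_*\CC(M_\gamma(\C_W)))_\kappa \\
&\cong (R\pi_*\CC(\widecheck{M}_{n,\ms L}(\C_W)))_\kappa[2d_\gamma].
\end{align*}
The steps are justified as follows.
\begin{enumerate}
\item The first isomorphism is Proposition \ref{prop: step 1}, with coefficients extended from $\QQ$ to $\CC$.
\item The second arrow is the split projection of Proposition \ref{prop: project}. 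It comes from the decomposition $R\pi_*\CC(\Pic^0_{\C/\M})\cong\bigoplus_i \bigwedge^i R^1[-i]$, which lets us project onto the summand $i=0$, namely $\CC$.
\item The third isomorphism is Proposition \ref{prop: st = kappa}, applied with $(\alpha,\beta)=(0,0)$. It identifies the trivial-$\Gamma$ summand with the $\kappa$-isotypic summand.
\item The fourth arrow is the split projection onto $G_\gamma$-invariants from Proposition \ref{prop: project Ggamma}.
\item The fifth isomorphism is Proposition \ref{prop: Gpi quotient}.
\item The last isomorphism is Corollary \ref{cor: last step}, shifted by $[2d_\gamma]$.
\end{enumerate}
Since every arrow is either an isomorphism or a split surjection, the composite has a right inverse: compose the inverse isomorphisms with the chosen sections of the two projections.

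\textbf{The main obstacle: step 3.} The delicate point is that this step sends $\Gamma$-invariants into the $\kappa$-isotypic part without scrambling the $G_\gamma$-characters. Proposition \ref{prop: st = kappa} shifts the pair $(\alpha,\beta)$ to $(\alpha+\kappa,\beta+\kappa_G)$, so the $\Gamma$-invariant part maps onto the $\kappa$-isotypic part as a direct sum over $\beta$ of shifted $G_\gamma$-pieces. The subsequent projection to $G_\gamma$-invariants then picks out exactly one $\beta$-component, namely $\beta=-\kappa_G$ before the shift. That component is still a direct summand, so the composite remains split.

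To make this precise, we will write the automorphism $f$ of Proposition \ref{prop: st = kappa} as a block-diagonal isomorphism with respect to the $\hat\Gamma\times\hat G_\gamma$-decomposition. Such a decomposition exists over $\CC$ because both groups are finite and act fiberwise. The key facts are that isotypic projections are split idempotents in $D^b(W,\CC)$, and that $f$ intertwines them up to the shift. Hence restricting $f$ to the $(0,\ast)$ summands gives the desired isomorphism onto the $(\kappa,\ast)$ summands, compatibly with the remaining projections.

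The only other care needed is bookkeeping. We must check that the open sets and covers from Corollary \ref{cor: last step} and Proposition \ref{prop: st = kappa} can be taken compatibly. This holds because we can pass to a common refinement and pull back all the isomorphisms to it.
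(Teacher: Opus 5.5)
Your proposal is correct and is essentially the paper's proof. It uses the same six-step chain over a common refinement $W$: Propositions \ref{prop: step 1}, \ref{prop: project}, \ref{prop: st = kappa}, \ref{prop: project Ggamma}, \ref{prop: Gpi quotient} and Corollary \ref{cor: last step}, with the right inverse built from the inverse isomorphisms and the sections of the two split projections. Your extra bookkeeping for step 3 also agrees with how the paper later uses this chain in Lemma \ref{lemma: lower}: one takes $\alpha=0$ with all $\beta$ (not just $(\alpha,\beta)=(0,0)$), $f$ permutes the isotypic blocks rather than being block-diagonal, and the $G_\gamma$-invariant projection then picks out the $(0,-\kappa_G)$ piece.
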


\begin{proof}
Taking $W$ to be the fiber product of the various Zariski open subsets of finite covers appearing in the propositions cited below, we have morphisms with right inverses
$$R\pi_* \CC(M_{r,d}(\C_{\gamma, W})) \overset{\ref{prop: step 1}}{\cong} R\pi_* \CC(M_{1,0}(\C_W)) \otimes (R\pi_* \CC(M_\pi(\C_{\gamma,W})))^\Gamma$$
$$ \overset{\xrightarrow{\ref{prop: project}}}{\leftarrow} (R\pi_* \CC(M_\pi(\C_{\gamma,W})))^\Gamma
\overset{\ref{prop: st = kappa}}{\cong} (R\pi_* \CC(M_\pi(\C_{\gamma, W})))_\kappa$$
$$ \overset{\xrightarrow{\ref{prop: project Ggamma}}}{\leftarrow} (R\pi_* \CC(M_\pi(\C_{\gamma,W})))_\kappa^{G_\gamma} \overset{\ref{prop: Gpi quotient}}{\cong} (R\pi_* \CC(M_\gamma(\C_W)))_\kappa \overset{\ref{cor: last step}}{\cong} (R\pi_* \CC(\widecheck{M}_{n, \ms L}(\C_W)))_\kappa[2d_\gamma].$$
\end{proof}

\section{Algebraic monodromy} \label{sec: monodromy}
Either because of the existence of a smooth compactification with smooth boundary (see \cite[Proposition 3.8.1]{Compactification}) or by a relative form of the nonabelian Hodge correspondence (see Proposition \ref{prop: relative NAH}), the relative moduli spaces of Higgs bundles $M_{r,d}(\C_\gamma)$ and $\widecheck{M}_{n, \ms L}(\C)$ defined in the previous section are topologically locally trivial over the base $\M$ (a finite cover of $\M_{g,1}$). In particular, choosing a point $x \in \M$ corresponding to a curve $\C_x =: C$ with cover $\C_{\gamma, x} =: C_\gamma$, it makes sense to discuss the monodromy representations of $\pi_1(\M,x)=: \pi_1(\M)$ on $H^*(M_{r,d}(C_\gamma))$ and $H^*(\widecheck{M}_{n,L}(C))$.

We use $\ker(\pi_1(\M) \acts H^*(M_{r,d}(C_\gamma)))$, etc. to mean the kernel of the monodromy representation of $\pi_1(\M)$ on $\oplus_i R^i\pi_* \CC(M_{r,d}(C_\gamma))$. (Note that the kernel of the monodromy representation is independent of the choice of $\CC$ or $\QQ$ coefficients, and so we suppress the field from the notation.)

\begin{lemma} \label{lemma: GL}
For each $\gamma \in \Gamma$, we have
$$\ker(\pi_1(\M) \acts H^*(M_{r,d}(C_\gamma))) = \ker(\pi_1(\M) \acts H^1(C_\gamma)).$$
\end{lemma}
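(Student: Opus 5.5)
The plan is to run the argument sketched for Fact \ref{fact: GL}, now applied to the family of curves $\C_\gamma \to \M$ rather than $\C \to \M$. I would prove the two inclusions separately.

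\textbf{The inclusion $\supseteq$.} The first step is to relate $H^*(M_{r,d}(C_\gamma))$ to tautological classes. The space $M_{r,d}(\C_\gamma)$ is the relative moduli space of stable rank $r$, degree $d$ Higgs bundles on the family of smooth projective curves $\C_\gamma/\M$. Here $(r,d)=1$, since $r \mid n$ and $(n,d)=1$. So there is a (twisted) universal Higgs bundle on $\C_\gamma \times_\M M_{r,d}(\C_\gamma)$, and it can be normalized. Markman's theorem \cite[Theorem 3]{Markman-integral} applies fiberwise and says that the cohomology of $M_{r,d}(C_\gamma)$ is generated by the Künneth components of the Chern classes of this universal bundle against $H^*(C_\gamma)$.

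Because the universal object exists relatively over $\M$, its Chern classes are global sections of the relevant local systems. The generation statement therefore gives a $\pi_1(\M)$-equivariant surjection onto $\bigoplus_i R^i\pi_*\QQ(M_{r,d}(\C_\gamma))$. The source is a polynomial algebra on the local system $R^*\pi_*\QQ(\C_\gamma)^\vee \otimes (\text{trivial})$, up to shifts. An element acting trivially on $H^1(C_\gamma)$ acts trivially on $H^0$ and $H^2$ of $C_\gamma$ (the orientation is preserved, since monodromy is holomorphic). Hence it acts trivially on all tautological generators, and so on $H^*(M_{r,d}(C_\gamma))$.

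One subtlety needs care: the universal bundle is only canonical up to twisting by a line bundle from the base. I would fix a normalization, for instance trivializing the determinant along the section $\pi_\gamma^{-1}(s(\M))$ or using Markman's normalized classes. This keeps the classes monodromy-invariant. Alternatively, the twisting ambiguity does not affect the algebra the classes generate.

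\textbf{The inclusion $\subseteq$.} The plan is to realize $H^1(C_\gamma)$ equivariantly inside $H^*(M_{r,d}(C_\gamma))$. The map $\det: M_{r,d}(\C_\gamma) \to \Pic^d(\C_\gamma/\M)$, or the map $q$ of Proposition \ref{prop: st = kappa} to $M_{1,d}(\C_\gamma)$ (an affine bundle over $\Pic^d$), is defined over $\M$. The fiberwise decomposition $M_{r,d} \simeq (M_{1,0}\times M^{\SL\text{-type}})/\Gamma$ used in Proposition \ref{prop: step 1} shows that pullback $H^1(\Pic^d(C_\gamma)) \to H^1(M_{r,d}(C_\gamma))$ is injective; in fact it is an isomorphism, since $M_\pi$-type fibers are simply connected after the finite quotient, but injectivity suffices. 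Since $H^1(\Pic^d(C_\gamma)) \cong H^1(C_\gamma)$ equivariantly, via the Abel--Jacobi map on the family, an element acting trivially on $H^*(M_{r,d})$ acts trivially on $H^1(C_\gamma)$.

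\textbf{Main obstacle.} I expect the hard part to be making Markman's generation result genuinely relative: producing the universal bundle over the family, checking that the normalization is monodromy-invariant, and confirming that the Higgs-bundle monodromy matches the topological action. The relative nonabelian Hodge or local triviality statements cited at the start of this section should handle the last point.
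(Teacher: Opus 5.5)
Your strategy uses the same two ingredients as the paper: Markman's tautological generation for $\supseteq$, and an equivariant copy of $H^1(C_\gamma)$ inside $H^1(M_{r,d}(C_\gamma))$ for $\subseteq$. Where you differ is in how generation is made relative. You work directly with a universal $\GL$ Higgs bundle on $\C_\gamma \times_\M M_{r,d}(\C_\gamma)$. The paper instead first splits $R\pi_*\QQ(M_{r,d}(\C_\gamma)) \cong R\pi_*\QQ(M_{1,0}(\C_\gamma)) \otimes R\pi_*\QQ(\widehat{M}_{r,d}(\C_\gamma))$ and uses $R^i\pi_*\QQ(M_{1,0}(\C_\gamma)) = \wedge^i V$. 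It then quotes relative generation for the $\PGL$ factor from \cite[Corollary 4.8]{SP}, where the normalized universal class is canonical.

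The normalization step you flag is not closed by what you wrote. Your ``alternatively'' clause gives no equivariance: transport around a loop $\sigma$ may replace $\mathcal E$ by $\mathcal E \otimes L_\sigma$, which changes the generators by terms in $c_1(L_\sigma)$. Triviality of $\sigma$ on $H^1(C_\gamma)$ therefore no longer forces triviality on the generators. The twist-invariant class $\mathrm{ch}(\mathcal E)\exp(-\ell)$, with $\ell = c_1(\det \mathcal E|_{\pi_\gamma^{-1}(s(\M))})/n$, is indeed monodromy-invariant. However, Markman's theorem is stated for genuine universal families. Showing that the K\"unneth components of this normalized class still generate (equivalently, that $\ell$ lies in the algebra they generate) is essentially the $\PGL$ generation statement the paper imports.

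A direct repair is available: a genuine universal Higgs bundle exists over the whole family. On Simpson's parameter scheme, consider the scalar $\GG_m$. It acts on $\det R\pi_*\mathcal E$ with weight $d - n(g-1)$. It acts on the determinant of $\mathcal E$ pushed forward from the degree-$m$ multisection $\pi_\gamma^{-1}(s(\M))$ with weight $n$. These are coprime because $(n,d)=1$, so some combination has weight one and the universal family descends. Its Chern classes are then honest global sections, and your argument runs verbatim.

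For $\subseteq$ your conclusion is right, but the decomposition you cite is the wrong one. Proposition \ref{prop: step 1} uses $M_{1,0}(C)$ of the base curve together with $M_\pi(C_\gamma)$, and the components of $M_\pi(C_\gamma)$ carry $H^1$ from $\Prym(C_\gamma/C)$. So that decomposition does not by itself give injectivity on all of $H^1(\Pic^d(C_\gamma))$. Your remark that these fibers become simply connected after the finite quotient is also false for $m>1$. Use instead the decomposition over the Jacobian of $C_\gamma$ itself, as the paper does. Alternatively, compose $\det$ with the orbit map $\Pic^0(C_\gamma) \to M_{r,d}(C_\gamma)$ to get the isogeny $A \mapsto A^{\otimes r} \otimes \det E$, as in the proof of Lemma \ref{lemma: lower}.
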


\begin{proof}
We recall several well-known facts, referring to \cite[\S 3-4]{SP} for proofs or references. First of all, we have
$$R\pi_* \QQ(M_{r,d}(\C_\gamma)) \cong R\pi_* \QQ(M^{\GL}_{1,0}(\C_\gamma)) \otimes R\pi_* \QQ(\widehat{M}_{r,d}(\C_\gamma))$$
\cite[(13), (17)]{SP}, where $\widehat{M}_{r,d}(\C_\gamma) := M_{r,d}(\C_\gamma)/M_{1,0}(\C_\gamma)$ is a relative moduli space of $\PGL$-Higgs bundles. If $V := R^1\pi_* \QQ(\C_\gamma)$ is the standard symplectic local system, we have $R^i\pi_* \QQ(M_{1,0}(\C_\gamma)) = \wedge^i V$ \cite[Proposition 4.9]{SP}, and by Markman's tautological generation result \cite[Theorem 7]{Markman} there is a surjection 
$$A \otimes \bigwedge\nolimits^\bullet(\oplus_{j=2}^r V) \twoheadrightarrow \oplus_i R^i\pi_* \QQ(\widehat{M}_{r,d}(\C_\gamma))$$
\cite[Corollary 4.8]{SP}, where $A$ is a direct sum of trivial local systems. This implies that $\ker(\pi_1(\M) \acts H^1(C_\gamma)) \subset \ker(\pi_1(\M) \acts H^*(M_{r,d}(C_\gamma)))$.
On the other hand, since also
$$R^1\pi_* \QQ(M_{r,d}(\C_\gamma)) \supset R^1\pi_* \QQ(M_{1,0}(\C_\gamma)) \otimes R^0\pi_* \QQ(\widehat{M}_{r,d}(\C_\gamma)) = V \otimes \QQ = V$$
we have the opposite inclusion $\ker(\pi_1(\M) \acts H^1(\C_\gamma)) \supset \ker(\pi_1(\M) \acts H^*(M_{r,d}(\C_\gamma))).$
\end{proof}

Let $C_n \to C$ be the \'etale cover corresponding to the canonical map
$$\pi_1(C) \to H_1(C, \ZZ) \to H_1(C, \ZZ/n)$$
and $\C_n \to \C$ the corresponding relative cover over $\M$.

\begin{corollary} \label{cor: upper}
For $W$ as in Theorem \ref{thm: relative}, we have
$$\ker(\pi_1(W) \acts H^1(C_n)) \subset \ker(\pi_1(W) \acts H^*(\widecheck{M}_{n,L}(C))).$$
\end{corollary}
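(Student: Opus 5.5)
The plan is to decompose $H^*(\widecheck{M}_{n,L}(C))$ into its $\hat{\Gamma}$-isotypic pieces and to control each piece by a cover $C_\gamma$ that is dominated by $C_n$. Since $\Gamma$ acts on $\widecheck{M}_{n,\ms L}(\C_W)$ fiberwise over $W$, we have a decomposition
$$\oplus_i R^i\pi_* \CC(\widecheck{M}_{n,\ms L}(\C_W)) = \oplus_{\kappa \in \hat{\Gamma}} \big(\oplus_i R^i\pi_* \CC(\widecheck{M}_{n,\ms L}(\C_W))\big)_\kappa$$
as local systems on $W$. The kernel of the monodromy on the total space is therefore the intersection of the kernels on the summands. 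The intersection pairing on $\Gamma = H^1(C,\ZZ/n)$ is perfect, so every $\kappa \in \hat{\Gamma}$ has the form $\langle \gamma, -\rangle$ for a unique $\gamma \in \Gamma$. It thus suffices to show, for each $\gamma$, that $\ker(\pi_1(W) \acts H^1(C_n))$ acts trivially on the $\kappa$-summand.

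For fixed $\gamma$ of order $m$, I would apply Theorem \ref{thm: relative}. It gives a morphism with a right inverse from $R\pi_*\CC(M_{r,d}(\C_{\gamma,W}))$ onto the shifted $\kappa$-summand. Taking cohomology sheaves, the $\kappa$-summand is a direct summand of, and hence a quotient of, $\oplus_i R^i\pi_*\CC(M_{r,d}(\C_{\gamma,W}))$ as local systems on $W$. So any element acting trivially on the source acts trivially on the $\kappa$-summand. By Lemma \ref{lemma: GL}, applied over $W$ by restricting along $W \to \M$, the kernel on the source is $\ker(\pi_1(W) \acts H^1(C_\gamma))$.

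It then remains to show $\ker(\pi_1(W) \acts H^1(C_n)) \subset \ker(\pi_1(W)\acts H^1(C_\gamma))$. The cover $C_\gamma \to C$ corresponds to the composite $\pi_1(C) \to H_1(C,\ZZ/n) \to G_\gamma$, so $C_n \to C$ factors through $C_\gamma$, and this factorization holds relatively over $\M$ because both covers are canonically defined from the level structure. Pullback gives an injection $H^1(C_\gamma,\QQ) \hookrightarrow H^1(C_n,\QQ)$, which is equivariant for the monodromy action of $\pi_1(W)$; it is split by $\frac{1}{\deg}$ times the transfer. An element acting trivially on $H^1(C_n)$ therefore acts trivially on the invariant subspace $H^1(C_\gamma)$.

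The main thing to check is this equivariance: that the relative covers $\C_n \to \C_\gamma \to \C$ over $W$ are compatible, so that $\pi_1(W)$ really acts on $H^1(C_\gamma)$ as a subspace of $H^1(C_n)$. This follows because the cover $\C_\gamma$ was built from $L_\gamma$, which is defined canonically over $\M$ via the level-$n$ structure. Taking the intersection over all $\gamma$ then gives the corollary.
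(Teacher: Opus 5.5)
Your proposal is correct and follows essentially the same route as the paper: Theorem \ref{thm: relative} gives a split surjection from $H^*(M_{r,d}(C_\gamma))$ onto each $\kappa$-isotypic piece, Lemma \ref{lemma: GL} reduces the kernel on that source to $\ker(\pi_1(W) \acts H^1(C_\gamma))$, and $H^1(C_\gamma)$ sits $\pi_1(W)$-equivariantly inside $H^1(C_n)$. The only difference is that the paper proves the equality $\ker(\pi_1(W) \acts H^1(C_n)) = \bigcap_\gamma \ker(\pi_1(W) \acts H^1(C_\gamma))$ via the character decomposition of $H^1(C_n)$, whereas you use only the inclusion coming from injectivity of pullback, which is all the corollary needs.
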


\begin{proof}
Recall from Theorem \ref{thm: relative} that we have a surjection $\oplus_{\gamma \in \Gamma} H^*(M_{r,d}(C_\gamma)) \twoheadrightarrow H^*(\widecheck{M}_{n,L}(C))$ of (complex) $\pi_1(W)$-representations, and so
\begin{equation} \label{eq: upper LHS 1}
    \ker(\pi_1(W) \acts \oplus_\gamma H^*(M_{r,d}(C_\gamma))) \subset \ker(\pi_1(W) \acts H^*(\widecheck{M}_{n,L}(C))) .
\end{equation}
Next, we have
\begin{multline} \label{eq: upper LHS 2}
    \ker(\pi_1(W) \acts \oplus_\gamma H^*(M_{r,d}(C_\gamma)))) = \cap_\gamma \ker(\pi_1(W) \acts H^*(M_{r,d}(C_\gamma))) \\
    = \cap_\gamma \ker(\pi_1(W) \acts H^1(C_\gamma)))
\end{multline}
by Lemma \ref{lemma: GL}.

Now, for each $\gamma \in \Gamma = H_1(C, \ZZ/n)^*$ we have a covering map $C_n \to C_\gamma$ corresponding to $H_1(C, \ZZ/n) \xrightarrow{\gamma} \CC^*$. This gives a $\hat{\Gamma} = H_1(C, \ZZ/n) = \Gal(C_n/C)$ and $\pi_1(W)$-equivariant identification $H^1(C_\gamma) \cong H^1(C_n)^{\ker \gamma}$. Moreover, the $\pi_1(W)$ and $\hat{\Gamma}$ actions on $H^1(C_n)$ commute, as $\hat{\Gamma}$ is trivialized over $W$. Breaking down $H^1(C_n)$ according to characters of $\hat{\Gamma}$, we
conclude that
$$\ker(\pi_1(W) \acts H^1(C_n))) = \cap_{\gamma \in \Gamma} \ker(\pi_1(W) \acts (H^1(C_n)_\gamma) = \cap_\gamma \ker(\pi_1(W) \acts H^1(C_\gamma)),$$
as for each $\gamma$ we have
$$\ker(\pi_1(W) \acts H^1(C_\gamma)) = \cap_{i=1}^{|\gamma|} \ker(\pi_1(W) \acts H^1(C_n)_{i \gamma}).$$
Combining this with (\ref{eq: upper LHS 1}) and (\ref{eq: upper LHS 2}) above, the result follows.
\end{proof}

\begin{lemma} \label{lemma: lower}
For any $\kappa \ne 0 \in \hat{\Gamma}$ we have
$$\ker(\pi_1(W) \acts H^*(\widecheck{M}_{n,L}(C))^\Gamma \oplus H^*(\widecheck{M}_{n,L}(C))_\kappa) \subset \ker(\pi_1(W) \acts H^1(C_\gamma)_{-\kappa_G}),$$
where $\kappa_G \in \hat{G}_\gamma$ is as defined before Proposition \ref{prop: st = kappa}.
\end{lemma}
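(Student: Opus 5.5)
The strategy is to pull the chain of identifications in the proof of Theorem \ref{thm: relative} back to a specific $G_\gamma$-isotypic piece of $H^*(M_{r,d}(C_\gamma))$, and then locate $H^1(C_\gamma)_{-\kappa_G}$ inside it. Here $\gamma$ is the element dual to $\kappa$, so $\kappa = \langle\gamma,-\rangle$.

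\textbf{Step 1: reduce to the $\kappa$-isotypic part of $M_\pi(C_\gamma)$.} By Corollary \ref{cor: last step} and Proposition \ref{prop: Gpi quotient}, over $W$ we have $\pi_1(W)$-equivariant isomorphisms
\[
H^*(\widecheck{M}_{n,L}(C))_\kappa \cong H^*(M_\gamma(C))_\kappa[-2d_\gamma] \cong H^*(M_\pi(C_\gamma))_\kappa^{G_\gamma}[-2d_\gamma].
\]
Proposition \ref{prop: st = kappa} gives an automorphism $f$ of $H^*(M_\pi(C_\gamma))$ sending the $(\alpha,\beta)$-isotypic piece to the $(\alpha+\kappa,\beta+\kappa_G)$-piece. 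I will check that $f$ commutes with $\pi_1(W)$. Over $W$ the component torsor is trivialized, so $\pi_1(W)$ preserves each component $\beta\cdot M_0$, and $f$ is a scalar on each component. Applying the inverse shift to the $(\kappa,0)$-piece then identifies $H^*(\widecheck{M}_{n,L}(C))_\kappa$, up to degree shift, with the $(0,-\kappa_G)$-piece $H^*(M_\pi(C_\gamma))^\Gamma_{-\kappa_G}$. Consequently the kernel of $\pi_1(W)$ on the left side of the lemma acts trivially on $H^*(M_\pi(C_\gamma))^\Gamma_{-\kappa_G}$.

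\textbf{Step 2: pass to $M_{r,d}(C_\gamma)$.} By Proposition \ref{prop: step 1},
\[
H^*(M_{r,d}(C_\gamma)) \cong H^*(M_{1,0}(C)) \otimes H^*(M_\pi(C_\gamma))^\Gamma.
\]
This isomorphism is $G_\gamma$-equivariant, and $G_\gamma$ acts trivially on $H^*(M_{1,0}(C))$: deck transformations fix pullbacks from $C$. So the $-\kappa_G$-isotypic parts match,
\[
H^*(M_{r,d}(C_\gamma))_{-\kappa_G} \cong H^*(\Pic^0(C)) \otimes H^*(M_\pi(C_\gamma))^\Gamma_{-\kappa_G}.
\]
The factor $H^*(\Pic^0(C)) = \wedge^\bullet H^1(C)$ is governed by $H^1(C) = H^1(C_\gamma)^{G_\gamma}$.

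This is where the invariant summand $H^*(\widecheck{M}_{n,L}(C))^\Gamma$ enters. I expect it to control $H^1(C)$, by the tautological generation argument of Lemma \ref{lemma: GL} applied with $\gamma=0$. Concretely, $H^1(C)$ appears through the $\kappa=0$ case of Theorem \ref{thm: relative} and Markman's result, or via Fact \ref{fact: GL}. So an element $\sigma$ in the kernel acts trivially on both tensor factors, hence on $H^*(M_{r,d}(C_\gamma))_{-\kappa_G}$.

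\textbf{Step 3: find $H^1(C_\gamma)_{-\kappa_G}$ inside.} Using the decomposition from Lemma \ref{lemma: GL},
\[
H^*(M_{r,d}(C_\gamma)) = \wedge^\bullet V \otimes H^*(\widehat{M}_{r,d}(C_\gamma)), \qquad V = H^1(C_\gamma).
\]
The $-\kappa_G$-part of $H^1(M_{r,d}(C_\gamma))$ contains $V_{-\kappa_G}\otimes H^0$, and $G_\gamma$ acts trivially on $H^0(\widehat{M}_{r,d}(C_\gamma))$ since that space is connected. This summand is $\pi_1(W)$-stable, so $\sigma$ acts trivially on $H^1(C_\gamma)_{-\kappa_G}$, which is the claim.

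\textbf{Main obstacle.} The hardest step is making Step 2's equivariance precise. The Künneth splitting from Lemma \ref{lemma: GL} has to be compatible with the $G_\gamma$-action, and the $H^1(C)$ factor has to be controlled by the $\Gamma$-invariant summand. My first attempt would be to observe that both $M_{1,0}(C_\gamma)$ and $H^1$ of $M_{r,d}(C_\gamma)$ are $G_\gamma$-equivariantly equal to $V$, which reduces everything to the degree-one statement.
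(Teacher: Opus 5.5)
Your Steps 1 and 2 follow the paper's proof: Corollary~\ref{cor: last step}, Proposition~\ref{prop: Gpi quotient} and Proposition~\ref{prop: st = kappa}, then Proposition~\ref{prop: step 1}. Your check that $f$ commutes with $\pi_1(W)$, because the components are trivialized over $W$, is a good addition.

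The gap is in Step~3, at the point you flag yourself. You treat $V_{-\kappa_G}\otimes H^0(\widehat M_{r,d}(C_\gamma))$ as a $G_\gamma$-isotypic summand, but nothing in Lemma~\ref{lemma: GL} makes its K\"unneth splitting compatible with the deck group. Its first factor is the degree-zero space $M_{1,0}(C_\gamma)$, which is tied to $M_{r,d}(C_\gamma)$ only through a choice such as an orbit map $a_E\colon A\mapsto A\otimes E$. No such $E$ can be $G_\gamma$-fixed: its determinant would be a $G_\gamma$-invariant line bundle of degree $d$. But $G_\gamma$-invariant line bundles are pulled back from $C$, so their degree is divisible by $m$, which is prime to $d$. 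So the claim that $H^1(M_{r,d}(C_\gamma))$ is $G_\gamma$-equivariantly $V$ is exactly what needs proof.

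The paper supplies it with $\det\colon M_{r,d}(C_\gamma)\to\Pic^d(C_\gamma)$, which is genuinely $G_\gamma$- and $\pi_1(W)$-equivariant.
\begin{itemize}
\item $\det^*$ is injective on cohomology, because $\det\circ a_E$ is the isogeny $A\mapsto A^{\otimes r}\otimes\det E$; no equivariance of $E$ is needed for this.
\item $H^*(\Pic^d(C_\gamma))\cong H^*(\Pic^0(C_\gamma))$ is $G_\gamma$-equivariant. After identifying the two via a point $A$, $\xi$ acts as on $\Pic^0(C_\gamma)$ composed with translation by $\xi^*A\otimes A^{-1}$, and translation in a connected group is trivial on cohomology.
\end{itemize}
You could also rescue your own route. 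Because $M_{r,d}(C_\gamma)$ is connected, $a_E^*$ on cohomology is independent of $E$. Since $\xi\circ a_E=a_{\xi^*E}\circ\xi$, it follows that $a_E^*$ is $G_\gamma$-equivariant after all.

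Second, your justification in Step~2 points the wrong way. There you need $\ker(\pi_1(W)\acts H^*(\widehat M_{n,d}(C)))\subset\ker(\pi_1(W)\acts H^1(C))$. Tautological generation, Theorem~\ref{thm: relative} at $\kappa=0$, and Fact~\ref{fact: GL} all give the reverse inclusion. Each exhibits $H^*(\widehat M_{n,d}(C))$ as a quotient or summand of something controlled by $H^1(C)$. What you actually need is that $H^1(C)$ occurs inside $H^*(\widehat M_{n,d}(C))$; the paper leans on the proof of Lemma~\ref{lemma: GL} at this same point.

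For this lemma you can avoid the issue entirely, using only the $\kappa$-summand:
\begin{itemize}
\item $\Gamma$ permutes the $m$ components of $M_\pi(C_\gamma)$ transitively, so $H^0(M_\pi(C_\gamma))^\Gamma$ is spanned by the sum of the component classes, which $G_\gamma$ fixes.
\item Hence $H^0(M_\pi(C_\gamma))^\Gamma_{-\kappa_G}=0$ for $\kappa\neq 0$.
\item So the degree-one part of your Step~2 tensor product is $H^1(M_{r,d}(C_\gamma))_{-\kappa_G}\cong H^1(M_\pi(C_\gamma))^\Gamma_{-\kappa_G}$.
\end{itemize}
Thus the $\kappa$-summand alone forces trivial action on $H^1(M_{r,d}(C_\gamma))_{-\kappa_G}$. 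Once the equivariance above is in place, this group contains $H^1(C_\gamma)_{-\kappa_G}$, which gives the lemma.
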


\begin{proof}
By the results of the previous section, we have isomorphisms of complex $\pi_1(W)$-representations
$$H^*(\widecheck{M}_{n,L}(C))_\kappa \overset{\ref{cor: last step}, \ref{prop: Gpi quotient}}{\cong} H^*(M_\pi(C_\gamma))_\kappa^{G_\gamma} \overset{\ref{prop: st = kappa}}{\cong} H^*(M_\pi(C_\gamma))^\Gamma_{-\kappa_G}$$
and
$$H^*(M_{r,d}(C_\gamma))_{-\kappa_G} \overset{\ref{prop: step 1}}{\cong} H^*(M_{1,0}(C)) \otimes H^*(M_\pi(C_\gamma))^\Gamma_{-\kappa_G}.$$
Furthermore, from the proof of Lemma \ref{lemma: GL} (applied in the case of $\kappa = 0$) we see that
$$\ker(\pi_1(W) \acts H^*(\widecheck{M}_{n,L}(C))^\Gamma) = \ker(\pi_1(W) \acts H^1(C)) = \ker(\pi_1(W) \acts M_{1,0}(C)),$$
(where the first equality uses the identification $H^*(\widecheck{M}_{n,L}(C))^\Gamma = H^*(\widehat{M}_{n,d}(C))$),
and so
\begin{equation*}
    \ker(\pi_1(W) \acts H^*(\widecheck{M}_{n,L}(C))^\Gamma \oplus H^*(\widecheck{M}_{n,L}(C))_\kappa) \subset \ker(\pi_1(W) \acts H^*(M_{r,d}(C_\gamma))_{-\kappa_G}).
\end{equation*}
Next, note that the $G_\gamma$-equivariant map
$$\det: M_{r,d}(\C_{\gamma, W}) \to \Pic^d_{C_{\gamma, W}/W}, (E, \theta) \mapsto \det E$$
induces an injection of $\pi_1(W)$-representations $H^*(\Pic^d(C_\gamma)) \hookrightarrow H^*(M_{r,d}(C_\gamma))$. (Indeed, for any choice of $(E, \theta) \in M_{r,d}(C_\gamma)$, we can compose the determinant map with
$$\Pic^0(C_\gamma) \to M_{r,d}(C_\gamma), A \mapsto (A \otimes E, 1 \otimes \theta)$$
to get the isogeny $\Pic^0(C_\gamma) \to \Pic^d(C_\gamma), A \mapsto A^{\otimes r} \otimes \det E$.)
We therefore conclude that
$$\ker(\pi_1(W) \acts H^*(M_{r,d}(C_\gamma))_{-\kappa_G}) \subset \ker(\pi_1(W) \acts H^*(\Pic^d(C_\gamma))_{-\kappa_G})$$
$$\subset \ker(\pi_1(W) \acts H^1(\Pic^d(C_\gamma))_{-\kappa_G}).$$

Finally, recall that choosing local trivializations of the $\Pic^0_{\C_{\gamma,W}/W}$-torsor $\Pic^d_{\C_{\gamma, W}/W}$ gives a natural isomorphism of $\pi_1(W)$-representations $H^*(\Pic^d(C_\gamma)) \cong H^*(\Pic^0(C_\gamma))$ \cite[Lemma 1.3.5]{PW}. We note that this isomorphism is also $G_\gamma$-equivariant: choosing any $A \in \Pic^d(C_\gamma)$ to trivialize the torsor, we see that $G_\gamma$ acts on $\Pic^d(C_\gamma)$ via its action on $\Pic^0(C_\gamma)$ multiplied by $\xi \mapsto \xi(A) \otimes A^{-1} \in \Pic^0(C_\gamma)$, and since $\Pic^0(C_\gamma)$ is a connected group, this second part acts trivially on cohomology. Thus
$$\ker(\pi_1(W) \acts H^1(\Pic^d(C_\gamma))_{-\kappa_G}) = \ker(\pi_1(W) \acts H^1(\Pic^0(C_\gamma))_{-\kappa_G})$$
$$= \ker(\pi_1(W) \acts H^1(C_\gamma)_{-\kappa_G}),$$
completing the proof.
\end{proof}

Recall that subgroups $H_1, H_2 \subset G$ are commensurable if $H_1 \cap H_2$ is of finite index in each $H_i$.
\begin{theorem} \label{thm: monodromy}
The subgroups
$$\ker(\pi_1(\M_{g,1}) \acts H^*(\widecheck{M}_{n,L}(C), \ZZ)) \,\,\,\,\textrm{and} \,\,\,\,\ker(\pi_1(\M_{g,1}) \acts H^1(C_n, \ZZ))$$
are commensurable in $\pi_1(\M_{g,1})$.
\end{theorem}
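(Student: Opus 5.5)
The plan is to reduce to the finite-index subgroup $\pi_1(W) \subset \pi_1(\M_{g,1})$ and then prove two inclusions there, one exact and one up to finite index. Since $W$ is a Zariski open subset of a finite cover of $\M$, itself a finite cover of $\M_{g,1}$, the image of $\pi_1(W)$ in $\pi_1(\M_{g,1})$ has finite index. Removing a proper closed subset from a smooth variety gives a surjection on $\pi_1$, and finite covers give finite-index images. Commensurability is insensitive to intersecting both subgroups with a finite-index subgroup, and monodromy of the locally trivial families over $W$ is pulled back from $\M_{g,1}$. So it suffices to compare the two kernels inside $\pi_1(W)$, and everything can be done with $\CC$-coefficients.

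Two reductions on coefficients come first. For $H^1(C_n,\ZZ)$, which is torsion-free, the $\ZZ$- and $\CC$-kernels agree. For $H^*(\widecheck{M}_{n,L}(C),\ZZ)$, the $\ZZ$-kernel is contained in the $\CC$-kernel. Conversely, anything acting trivially on $H^*(-,\CC)$ acts on the finitely generated group $H^*(-,\ZZ)$ through a subgroup of $\Hom(H/\mathrm{tors}, \mathrm{tors}) \rtimes \mathrm{Aut}(\mathrm{tors})$ type data, which is finite. Hence the $\ZZ$-kernel has finite index in the $\CC$-kernel. It therefore suffices to show the $\CC$-kernels are commensurable in $\pi_1(W)$. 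One direction is Corollary \ref{cor: upper}: $\ker(\pi_1(W)\acts H^1(C_n)) \subset \ker(\pi_1(W) \acts H^*(\widecheck{M}_{n,L}(C)))$.

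For the reverse direction, I would decompose $H^1(C_n,\CC) = \bigoplus_{\chi} H^1(C_n)_\chi$ over characters $\chi$ of $\hat\Gamma = \Gal(C_n/C)$. Each summand is preserved by $\pi_1(W)$, since the Galois action is trivialized over $W$. By the argument in the proof of Corollary \ref{cor: upper}, $H^1(C_n)_\chi \cong H^1(C_\gamma)_{\chi'}$ for $\gamma$ determined by $\chi$ and $\chi'$ a faithful character of $G_\gamma$. The trivial character gives $H^1(C)$, which is handled by the $\Gamma$-invariant part as in Lemma \ref{lemma: lower}. For a nontrivial $\chi$, I would choose $\gamma$ and $\kappa = \langle\gamma,-\rangle$ so that $-\kappa_G$ is the relevant character of $G_\gamma$. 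I would also check that, as $\gamma$ ranges over elements of order $m$ (including multiples $i\gamma$ with $i$ coprime to $m$), every faithful character of $G_\gamma$ arises as some $-\kappa_G$ under the identification of Lemma \ref{lemma: Ggamma = Gamma/K}. Lemma \ref{lemma: lower} then gives that $\ker(\pi_1(W)\acts H^*(\widecheck{M}_{n,L}(C)))$ acts trivially on each $H^1(C_n)_\chi$. Summing over $\chi$, this kernel lies in $\ker(\pi_1(W)\acts H^1(C_n))$, so the two $\CC$-kernels are in fact equal on $\pi_1(W)$. Combined with the finite-index reductions, this proves commensurability.

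I expect the main obstacle to be the character bookkeeping in the last step: confirming that the pairing $\gamma \mapsto \kappa \mapsto \kappa_G$, together with the $d$-dependent identification $\Gamma/\ker\kappa \cong G_\gamma$, hits every faithful character of $G_\gamma$, so that no isotypic piece of $H^1(C_n)$ is missed. If that fails for some characters, I would fall back on Galois conjugation. The monodromy is defined over $\QQ$, so the kernel on $H^1(C_\gamma)_{\chi'}$ equals the kernel on its $\Gal(\QQ(\mu_m)/\QQ)$-conjugates, and all faithful characters form a single Galois orbit. Hence it suffices to hit one faithful character for each $\gamma$.
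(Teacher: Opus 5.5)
Your proposal is correct and follows essentially the paper's own route. You reduce to $\CC$-coefficients using finiteness of torsion, and to the finite-index image of $\pi_1(W)$. One inclusion comes from Corollary \ref{cor: upper}. The other comes from Lemma \ref{lemma: GL} for the trivial character and Lemma \ref{lemma: lower} for nontrivial $\kappa$, via the isotypic splitting $H^1(C_n)=\bigoplus_\gamma H^1(C_\gamma)_\gamma$ and the fact that $-\kappa_G$ runs over all faithful characters of $G_\gamma$ as $\kappa$ runs over generators of $\langle\kappa\rangle$.

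Your Galois-conjugation fallback is a valid extra safeguard for that bookkeeping step. The paper settles the step directly, by noting that the cover $C_\gamma\to C$ and the identification of Lemma \ref{lemma: Ggamma = Gamma/K} do not depend on the choice of generator of $\langle\gamma\rangle$.
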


\begin{proof}
First of all, since the torsion subgroup of $H^*(\widecheck{M}_{n,L}(C), \ZZ)$ is finite, it suffices to work with cohomology over $\CC$.
Also, since passing to a finite cover has the effect of restricting to a finite index subgroup of $\pi_1(\M_{g,1})$ and restricting to a Zariski open set induces a surjection on $\pi_1$, it suffices to prove that on $\pi_1(W)$ the two kernels are commensurable.

By Corollary \ref{cor: upper} we have
$$\ker(\pi_1(W) \acts H^1(C_n)) \subset \ker(\pi_1(W) \acts H^*(\widecheck{M}_{n,L}(C))).$$ On the other hand, since $\Gamma$ is trivialized over $W$, we have
$$\ker(\pi_1(W) \acts H^*(\widecheck{M}_{n,L}(C)))  = \bigcap_{\kappa \ne 0 \in \hat{\Gamma}} \ker(\pi_1(W) \acts H^*(\widecheck{M}_{n,L}(C))^\Gamma \oplus H^*(\widecheck{M}_{n,L}(C))_\kappa)$$
\begin{equation} \label{eq: char decomp}
    \subset \bigcap_{\kappa \in \hat{\Gamma}} \ker(\pi_1(W) \acts H^1(C_\gamma)_{-\kappa_G})
\end{equation}
by Lemmas \ref{lemma: GL} (for $\kappa = 0$) and \ref{lemma: lower} (for $\kappa \ne 0$). Considering the splitting
$$H^1(C_n) = \oplus_{\gamma \in \Gamma} H^1(C_n)_\gamma = \oplus_{\gamma \in \Gamma} H^1(C_n/(\ker \gamma))_{\gamma} = \oplus_{\gamma \in \Gamma} H^1(C_\gamma)_\gamma$$
we see that the intersection (\ref{eq: char decomp}) runs over all possible characters. (Recall that we defined $\kappa_G \in \widehat{G}_\gamma \subset \Gamma$ to match $\gamma \in \widehat{\Gamma/\ker(\kappa)} \subset \widehat{\Gamma}$ via the isomorphism of Lemma \ref{lemma: Ggamma = Gamma/K}. Both the cover $C_\gamma \to C$ and this isomorphism are independent of the choice of generator of $\langle \gamma \rangle$, and so as $\kappa$ ranges over $\widehat{\Gamma}$, correspondingly $-\kappa_G$ ranges over $\Gamma$.)
\end{proof}

The $\pi_1(\M_g)$ action on $H_1(C_n)$ is described in \cite{Looijenga}, apart from a few exceptions in genus 2; the problem arises in the argument that the monodromy representations on the different isotypic components of $H_1(C_n)$, each of which is understood individually \cite[Theorem 2.4]{Looijenga}, are (virtually) independent. (The Hodge-theoretic big monodromy result in \cite{LLS}, which only describes algebraic monodromy but holds also for nonabelian Galois groups, only applies in genus $> 2$.) 

\begin{corollary} \label{cor: monodromy}
If either $g > 2$ or $g = 2$ and $(n,6) = 1$, then the algebraic monodromy group of $\pi(\M_{g,1}) \acts H^*(\widecheck{M}_{n,L}(C), \CC)$ is the derived subgroup of $\Sp_{\hat{\Gamma}}(H^1(C_n, \CC))$, i.e.
$$\begin{cases}
\SL_{2(g-1)}(\CC)^{\times (n^{2g} - 2^{2g})/2} \times \Sp_{2(g-1)}(\CC)^{\times (2^{2g} -1)} \times \Sp_{2g}(\CC) & n \equiv 0(2) \\
\SL_{2(g-1)}(\CC)^{\times (n^{2g} - 1)/2} \times \Sp_{2g}(\CC) & n \equiv 1(2).
\end{cases}
$$
If $g = 2$ and $(n,6) \ne 1$, then the algebraic monodromy is contained in this group.
\end{corollary}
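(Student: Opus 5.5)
Theorem \ref{thm: monodromy} makes the two kernels commensurable. The algebraic monodromy group depends only on the image up to finite index, since it is the identity component of the Zariski closure. So it suffices to compute the algebraic monodromy of $\pi_1(\M_{g,1})$ acting on $H^1(C_n,\CC)$, and then check that passing from $H^1(C_n)$ to $H^*(\widecheck{M}_{n,L}(C))$ does not change this group. For the second point, commensurability of kernels gives a correspondence between finite-index subgroups of the two images, namely $\rho_1(K)$ and $\rho_2(K)$ with $K$ the intersection of the kernels' preimage structure. Concretely, on a finite-index subgroup $\Lambda\subset\pi_1(W)$ the kernel $K_1$ of the $H^1(C_n)$ action is contained in the kernel $K_2$ of the $H^*$ action, with $[K_2:K_1]<\infty$. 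Hence $\rho_2(\Lambda)=\Lambda/K_2$ is a quotient of $\rho_1(\Lambda)=\Lambda/K_1$ by a finite normal subgroup. I then want the induced map of Zariski closures to be an isogeny on identity components. For this I would exhibit the $H^*$-representation as a subquotient of tensor constructions on $H^1(C_n)$; the surjection of Theorem \ref{thm: relative} combined with Markman generation in Lemma \ref{lemma: GL} gives exactly this. Conversely, Lemma \ref{lemma: lower} gives each isotypic piece $H^1(C_n)_\gamma$ as a subquotient of $H^*(\widecheck{M}_{n,L}(C))$, so the identity components agree up to isogeny. Finally I would upgrade isogeny to equality: both groups are realized as Zariski closures inside the same algebraic constructions, so equality should follow, or at least equality of the groups modulo center.

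The main step is the algebraic monodromy of $H^1(C_n)$, which I would import from Looijenga \cite{Looijenga}. First decompose $H^1(C_n,\CC)=\bigoplus_{\chi\in\Gamma}H^1(C_n)_\chi$ by characters of $\hat\Gamma=H_1(C,\ZZ/n)$. The piece $\chi=0$ is $H^1(C)$ of dimension $2g$, with monodromy $\Sp_{2g}$. For $\chi\ne0$, Chevalley--Weil gives dimension $2g-2$. For order $2$ characters, of which there are $2^{2g}-1$, the piece is real and self-dual, so its monodromy is $\Sp_{2g-2}$. For the other characters $\chi$ and $-\chi$ are paired by the intersection form, which makes $H^1_\chi$ and $H^1_{-\chi}$ dual, so each pair contributes $\SL_{2g-2}$ (the unitary group of a Hermitian form, complexified). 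Counting pairs yields the exponents in the statement, with $n^{2g}-2^{2g}$ non-real characters for $n$ even and $n^{2g}-1$ for $n$ odd. Each individual factor is \cite[Theorem 2.4]{Looijenga}; Looijenga shows the image is arithmetic in the corresponding unitary or symplectic group.

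The main obstacle is independence: showing the algebraic monodromy is the full product of the factors rather than a proper subgroup projecting onto each. I would argue by Goursat's lemma. The factors are simple (up to center), so a proper subgroup must identify two factors via an isomorphism, possibly composed with an outer automorphism such as inverse-transpose. I would rule this out by comparing the monodromy of suitable Dehn twists on different isotypic pieces: their unipotent ranks, or their eigenvalues on the pieces, differ between characters. This comparison is exactly where the genus $2$ exceptions with $(n,6)\neq1$ enter, so in those cases I would only claim containment. For $g>2$ one can alternatively invoke the Hodge-theoretic result of \cite{LLS}. The identity component of the full product is the derived subgroup of $\Sp_{\hat\Gamma}(H^1(C_n,\CC))$: the unitary groups have abelianization given by their central tori, so the arithmetic image lands in the $\SL$ parts. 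In the exceptional genus $2$ cases, containment follows because the monodromy always preserves the intersection form and commutes with $\hat\Gamma$, and the identity component of the image's closure lies in the derived subgroup for the same determinant reason.
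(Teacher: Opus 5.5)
\textbf{The gap is in your independence step.} You take the factors to be the complex groups $\SL(H^1(C_n)_\chi)$, one per pair $\pm\chi$. You then propose to separate any two of them by Dehn twists whose unipotent ranks or eigenvalues ``differ between characters.'' This fails for Galois-conjugate characters $\chi$ and $k\chi$ with $k\in(\ZZ/\mathrm{ord}\,\chi)^\times$, $k\neq\pm1$. Such pairs occur for every $n>1$ with $(n,6)=1$, since $n$ then has a prime factor $p\ge 5$. So they occur throughout the genus $2$ range, which is exactly where you have no fallback.

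The reason your test cannot work is as follows.
\begin{enumerate}
\item The monodromy preserves $H^1(C_n,\QQ)$ and, on a finite-index subgroup, commutes with $\hat\Gamma$.
\item Hence, in Galois-conjugate bases, its matrix on $H^1(C_n)_{k\chi}$ is $\sigma_k$ applied entrywise to its matrix on $H^1(C_n)_\chi$, where $\sigma_k\colon\zeta\mapsto\zeta^k$.
\item So an element acts trivially on one piece if and only if it acts trivially on the other, and $\mathrm{rank}(x-1)$ is the same on both.
\item The only Dehn twist powers available are those in the level-$n$ subgroup. These lift to multitwists and are unipotent, so they carry no eigenvalue information.
\end{enumerate}
Consequently there is no element of the form $(u,1)$ to feed into Goursat. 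No comparison of this kind can exclude a diagonal relation between conjugate factors. Indeed, if the representation on $H^1(C_n)_\chi$ were defined over a subfield fixed by $\sigma_k$, the Zariski closure would be a diagonal. What actually excludes this is arithmeticity of the image in the $\QQ$-group obtained by restriction of scalars (from $\QQ(\zeta)^+$ for the unitary pieces), combined with Borel density. Together these force Zariski density in the product over all complex embeddings.

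\textbf{How the paper handles this.} The paper avoids the Goursat analysis entirely. It cites \cite[Corollary 2.6]{Looijenga}: a finite-index subgroup maps onto an arithmetic subgroup of the commutator subgroup of the $\QQ$-group $\Sp_{\hat\Gamma}(H^1(C_n,\QQ))$. This treats conjugate and non-conjugate factors at once, and its hypotheses are where the condition ``$g>2$ or $(n,6)=1$'' comes from. The paper then applies Borel density once. Your claim that the $(n,6)$ exceptions arise in your Dehn-twist comparison is unsupported, since nothing in your sketch depends on $n$ modulo $6$. For $g>2$ your fallback to \cite{LLS} is legitimate. For $g=2$, however, the proposal as written does not establish the claimed equality.

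\textbf{Your first paragraph.} Here you address a point the paper passes over: commensurable kernels alone do not determine Zariski closures. Your Tannakian fix works, and it gives an isomorphism outright rather than an isogeny, so the hedge ``modulo center'' is unnecessary.
\begin{itemize}
\item In one direction, Theorem~\ref{thm: relative} together with Markman generation places $H^*(\widecheck{M}_{n,L}(C))$ in the tensor category generated by $H^1(C_n)$.
\item In the other direction, the proof of Lemma~\ref{lemma: lower} embeds $H^1(C_\gamma)_{-\kappa_G}$ into $\bigwedge\nolimits^\bullet H^1(C)\otimes H^*(\widecheck{M}_{n,L}(C))_\kappa$. Note that this embedding is not into $H^*(\widecheck{M}_{n,L}(C))$ alone, as you state.
\end{itemize}
Using also that $H^1(C)$ occurs in $H^*(\widecheck{M}_{n,L}(C))^\Gamma$, the two representations generate the same tensor category.
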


\begin{proof}
First, by Theorem \ref{thm: monodromy}, and the fact that the $\pi_1(\M_{g,1})$ action on $H^1(C_n)$ factors through $\pi_1(\M_g)$ (i.e., that the quotient $C_n \to C$ is canonical, and thus extends to a well-defined cover over $\M_g$), it suffices to compute the algebraic monodromy group of $\pi_1(\M_g) \acts H^1(C_n, \CC)$. Assuming that $g$ and $n$ are as described, Looijenga's result is that this is as big as possible, given that the monodromy action is compatible with the intersection form and (after passing to a finite index subgroup) the $\widehat{\Gamma}$ action on $C_n$, and must be semisimple \cite{Hodge2}; more precisely, the image of a finite-index subgroup of $\pi_1(\M_g)$ is an arithmetic subgroup of the commutator subgroup of $\Sp_{\hat{\Gamma}}(H^1(C_n, \QQ))$ \cite[Corollary 2.6]{Looijenga}. By the Borel density theorem, an arithmetic subgroup of this semisimple group is Zariski dense, and so the closure over $\CC$ is $[\Sp_{\hat{\Gamma}}(H^1(C_n, \CC)), \Sp_{\hat{\Gamma}}(H^1(C_n, \CC))] \subset \GL(H^1(C_n, \CC))$. (On the other hand, if $g = 2$ and $(n,6) \ne 1$, the algebraic monodromy may be a proper subgroup.)

This group may be described explicitly as follows: first,
an automorphism of $H^1(C_n, \CC)$ commutes with $\widehat{\Gamma}$ exactly if it fixes all isotypic components $H^1(C_n, \CC)_\gamma$, and since the intersection form is $\widehat{\Gamma}$-invariant, each $H^1(C_n, \CC)_\gamma$ pairs nontrivially exactly with $H^1(C_n, \CC)_{-\gamma}$. In other words, we have
$$\Sp_{\hat{\Gamma}}(H^1(C_n, \CC)) = \prod_{\{\gamma, - \gamma\} \subset \Gamma} \Sp_{\hat{\Gamma}}(H^1(C_n)_\gamma + H^1(C_n)_{-\gamma})$$
where
$$\Sp_{\hat{\Gamma}}(H^1(C_n)_\gamma + H^1(C_n)_{-\gamma}) = \begin{cases}
\GL(H^1(C_n)_\gamma) & \gamma \ne - \gamma \\
\Sp(H^1(C_n)_\gamma) & \gamma = - \gamma.
\end{cases}
$$
(Indeed, in the first case, a $\widehat{\Gamma}$-equivariant transformation will fix $H^1(C_n)_{\pm \gamma}$, which are dual under the intersection pairing, so a symplectic action on the sum is determined by an automorphism on $H^1(C_n)_\gamma$. In the second case, the intersection form restricts nondegenerately to $H^1(C_n)_\gamma$, and any action on this space is $\widehat{\Gamma}$-invariant.) Finally, we note that we have $[\GL_n(\CC), \GL_n(\CC)] = \SL_n(\CC)$ and $[\Sp_{2g}(\CC), \Sp_{2g}(\CC)] = \Sp_{2g}(\CC)$ and that $\dim H^1(C_n)_\gamma = 2(g-1)$ for $\gamma \ne 0$ and $2g$ for $\gamma = 0$ (by \cite[Proposition 4.2]{Looijenga} and the fact that $H^1(C_n)_\gamma \cong H^1(C_\gamma)_\gamma$).
\end{proof}

\subsection{Back to the character variety}
It now remains to show that the $\pi_1(\M_{g,1}) = \Mod(C \setminus p)$ actions on $H^*(\widecheck{M}_{n,L}(C))$ and $H^*(M^{\SL})$ agree. This is presumably well-known to experts, but it is nevertheless not immediate from Simpson's original nonabelian Hodge correspondence, which only describes the case $d = 0$.

Recall that $\pi_1(\M_{g,1}) = \Mod(C \setminus p)$ acts on
$$M^{\SL} = \{\phi \in \Hom(\pi_1(C \setminus p), \SL_n(\CC)): \phi(\gamma_p) = e^{2\pi i d/n} \Id_n\}/\PGL_n(\CC)$$
(where $\gamma_p$ is a counterclockwise loop around the puncture $p$)
via its outer action on $\pi_1(C \setminus p)$. (See \cite[Definition 2.3]{SP} for more discussion.) 
We also consider the ``degree $d$" $\PGL_n$ character variety
$$M^{\PGL} := M^{\SL}/\Hom(\pi_1(C \setminus p), \mu_n),$$
where the action of $\Hom(\pi_1(C \setminus p), \mu_n)$ is defined by the action of $\mu_n$ on $\SL_n$ by scalar multiplication. The $\pi_1(\M_{g,1})$ action on $H^*(M^G)$ can be described as a monodromy action on the ``local system of varieties" $M^G(\C_{g,1}) := \mc T_{g,1} \times_{\pi_1(\M_{g,1})} M^G$ over $\M_{g,1}$, which is a quotient of the Teichm\"uller space $\mc T_{g,1}$ by the fundamental group $\pi_1(\M_{g,1})$.

As mentioned in the proof of Lemma \ref{lemma: GL}, there is also a moduli space $\widehat{M}_{n,d}(C) := \widecheck{M}_{n,L}(C)/\Pic^0(C)[n]$ of ``degree $d$" $\PGL$ Higgs bundles, where $\Pic^0(C)[n] = \Gamma$ acts by tensor product. Similarly, there is a relative version $\widehat{M}_{n,d}(\C_{g,1})$ over $\M_{g,1}$, and it follows from Simpson's degree 0 relative nonabelian Hodge correspondence \cite[Theorem 9.11, Lemma 9.4]{Simpson-reps2} that there is a canonical homeomorphism of $M^{\PGL}(\C_{g,1})$ and $\widehat{M}_{n,d}(\C_{g,1})$ over $\M_{g,1}$. (See \cite[\S 3]{SP} for a discussion of the relative $\PGL$ nonabelian Hodge correspondence in this form.)

\begin{prop} \label{prop: relative NAH}
The two monodromy representations of $\pi_1(\M_{g,1})$ on $H^*(\widecheck{M}_{n,L}(C)) \cong H^*(M^{\SL})$ agree on a finite index subgroup.
\end{prop}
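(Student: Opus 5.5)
The plan is to reduce the $\SL$ statement to the relative $\PGL$ correspondence already available, using the fact that both $\SL$ spaces are finite Galois covers of the corresponding $\PGL$ spaces with group $\Gamma$.

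On the Betti side, $M^{\SL}(\C_{g,1}) \to M^{\PGL}(\C_{g,1})$ is the quotient by the finite group $\Hom(\pi_1(C\setminus p),\mu_n) = \Gamma$. On the Dolbeault side, $\widecheck{M}_{n,L}(\C) \to \widehat{M}_{n,d}(\C)$ is the quotient by $\Pic^0(C)[n] = \Gamma$. Over the level cover $\M$ these groups are trivialized local systems of groups. I would first pass to the finite cover $\M \to \M_{g,1}$, or to a further finite cover, so that the monodromy acts trivially on $\Gamma$ on both sides; this is the only reason for restricting to a finite index subgroup.

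Next I would lift Simpson's homeomorphism $\widehat{M}_{n,d}(\C_{g,1}) \cong M^{\PGL}(\C_{g,1})$ to the $\Gamma$-covers.

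1. Check that the relative $\PGL$ homeomorphism is compatible with the $\Gamma$-covers. Both covers $\widecheck{M} \to \widehat{M}$ and $M^{\SL} \to M^{\PGL}$ are classified, fiberwise, by the obstruction to lifting a $\PGL_n$-object to an $\SL_n$-object with fixed determinant/twist. Nonabelian Hodge on a single curve matches $\SL$ Higgs bundles of determinant $L$ with twisted $\SL$ local systems. One way to see this is to take the harmonic metric on $\widehat{M}$ and lift it, noting that the twisted Higgs data corresponds to a Hermitian–Yang–Mills metric with fixed central curvature, hence to monodromy $e^{2\pi i d/n}$ around $p$.
2. Pull back the $\Gamma$-cover $M^{\SL}(\C) \to M^{\PGL}(\C)$ along the homeomorphism. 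Both it and $\widecheck{M}_{n,L}(\C) \to \widehat{M}_{n,d}(\C)$ are then $\Gamma$-torsors over the same space.
3. Show these two torsors are isomorphic, at least after a further finite cover of $\M$. Fiberwise they are isomorphic by the single-curve correspondence. Two $\Gamma$-torsors over the total space that agree on each fiber differ by a class in $H^1$ of the total space with $\Gamma$ coefficients restricted trivially to fibers. Since $\Gamma$ is finite and the fibers are connected with the torsor iso class locally constant, this difference comes from $H^1(\M,\Gamma)$, which is killed by passing to a finite cover of $\M$.
4. With an identification of total spaces over the base, conclude that the local systems $R^i\pi_*$ agree, so the monodromy representations coincide on the corresponding finite index subgroup.

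I expect step 1 to be the main obstacle: making the fiberwise $\SL$/twisted correspondence vary continuously in families, so that the difference really is a locally constant class. The first thing I would try is to avoid analysis by working with cohomology directly. $H^*(\widecheck{M}_{n,L}(C))$ decomposes into $\Gamma$-isotypic parts, each being the cohomology of $\widehat{M}_{n,d}$ with coefficients in a rank one local system attached to the torsor. The torsor is determined by a class in $H^1(\widehat{M},\Gamma)$, and the homeomorphism of $\PGL$ spaces identifies these groups compatibly with monodromy. Finiteness of $H^1(\widehat{M}_{n,d}(C),\Gamma)$ then ensures that, up to a finite index subgroup, the monodromy fixes the relevant class. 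That reduces matching the torsors to a fiberwise comparison at one point, which is the known nonabelian Hodge statement for a single curve.
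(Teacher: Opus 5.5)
Your route is the paper's. You lift Simpson's canonical relative $\PGL$ homeomorphism through the two $\Gamma$-quotients, using that a lift exists on each fiber (Hausel--Thaddeus) and that lifts are unique up to $\Gamma$. The lifts then form a $\Gamma$-torsor over the base: this is the paper's $n^{2g}$-sheeted cover $S \to \M_{g,1}$, equivalently your difference class in $H^1(\M,\Gamma)$. It becomes trivial on a finite cover, which then carries a relative homeomorphism. Your step 1 is not really an obstacle. As your last paragraph notices, both covers and the $\PGL$ homeomorphism are already defined globally over the base, so only fiberwise existence of a lift is used, and the paper simply quotes it.

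There is, however, one step that fails as written: neither $\widecheck{M}_{n,L}(C) \to \widehat{M}_{n,d}(C)$ nor $M^{\SL} \to M^{\PGL}$ is a $\Gamma$-torsor. The group $\Gamma$ has fixed points. The fixed locus of $\gamma$ is the nonempty endoscopic locus $M_\gamma(C)$, which is the source of all the variant cohomology, so both quotient maps are branched. Consequently the classification of the covers by $H^1(-,\Gamma)$ in your step 3 does not apply on the whole total space. The same problem affects your alternative: the $\kappa$-isotypic part of the pushforward of the constant sheaf is not a rank one local system, since its stalk vanishes at points whose stabilizer is not contained in $\ker(\kappa)$.

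The repair is the extra sentence in the paper's proof. $\Gamma$ acts freely over a Zariski open subset of $\widehat{M}_{n,d}(C)$ whose preimage is dense, and connected, being the complement of the $M_\gamma(C)$ in a connected smooth variety. Run your torsor argument over that locus. Then use that a self-homeomorphism of $\widecheck{M}_{n,L}(C)$ over the identity of $\widehat{M}_{n,d}(C)$ is determined by its restriction to this dense open set. Hence the lifts on each fiber are exactly the $\Gamma$-translates of the Hausel--Thaddeus homeomorphism, and the isomorphism you produce over the free locus extends across the branch locus.
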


\begin{proof}
As described, e.g., in \cite[\S 2]{HT}, over each point $(C,p)$ of $\M_{g,1}$ there is a (not necessarily canonical) nonabelian Hodge homeomorphism $\widecheck{M}_{n,L}(C) \cong M^{\SL}(C \setminus p)$ descending to the canonical homeomorphism $\widehat{M}_{n,d}(C) \cong M^{\PGL}(C \setminus p)$ under the quotient map induced by $\SL_n \to \PGL_n$ on each side of the correspondence.

Now, since the $\Gamma$ action on $\widecheck{M}_{n,L}(C)$ is generically free and thus $\widecheck{M}_{n,L}(C) \to \widehat{M}_{n,d}(C) = \widecheck{M}_{n,L}(C)/\Gamma$ is \'etale over a Zariski open set (whose preimage is dense in the Euclidean topology on $\widecheck{M}_{n,L}(C)$), we see that isomorphisms of $\widecheck{M}_{n,L}(C)$ over the identity map of $\widehat{M}_{n,d}$ are determined by their restrictions to this open subset and thus indexed by the covering group $\Gamma$. Then there is an $n^{2g}$-fold cover $S$ of $\M_{g,1}$ whose fiber over $(C,p)$ consists of the fiberwise homeomorphisms $\widecheck{M}_{n,L}(C) \cong M^{\SL}(C \setminus p)$ over the canonical homeomorphism $\widehat{M}_{n,d}(C) \cong M^{\PGL}(C \setminus p)$ given by Simpson's nonabelian Hodge correspondence, and by construction there is a relative homeomorphism of $\widecheck{M}_{n,\ms L}(\C_S)$ and $M^{\SL}(\C_S)$ over $S$; in particular, the monodromy representations of $\pi_1(S)$ on $H^*(\widecheck{M}_{n,L}(C))$ and $H^*(M^{\SL})$ agree.
\end{proof}

\begin{remark}
The nonabelian Hodge correspondence of \cite{HT} is not a priori canonical because, as described in \cite{HT-generators} in the case of $\GL$, one passes from the moduli space of connections with constant central curvature $di\omega \Id_n$ on $C$ (canonically identified with $M_{n,d}(C)$) to the space of flat connections on $C \setminus p$ with holonomy $e^{2\pi i d/n} \Id_n$ around the puncture (canonically identified with $M^{\GL}(C \setminus p)$) by taking holonomies around an arbitrarily selected set of loops corresponding to generators $\pi_1(C \setminus p)$. However, since changing the choice of loop in a homotopy class has the effect of scaling the holonomy by $\CC^*$, we do recover a canonical correspondence of $\PGL$ spaces.

In fact, there should be a canonical degree $d$ $\SL$ nonabelian Hodge correspondence over $\M_{g,1}$ given by Simpson's parabolic nonabelian Hodge correspondence on the punctured curve $C \setminus p$ \cite{Simpson-noncompact}, or more or less equivalently, on the root stack $C(\frac{p}{n})$ \cite{Simplicial}. However, since our results only hold on a finite cover of $\M_{g,1}$ in any case, and since there are complications involved in formulating these correspondences for reductive groups other than $\GL_n$ (though see \cite{huang2023tameparahoricnonabelianhodge} and \cite{huang2025modulispacesfilteredglocal} for complete results on parabolic bundles on curves), we do not pursue this any further.
\end{remark}

In view of the previous proposition, Theorem \ref{thm: intro} and Corollary \ref{cor: intro} are equivalent to their Higgs bundle counterparts, Theorem \ref{thm: monodromy} and Corollary \ref{cor: monodromy}.

\printbibliography
\Addresses

\end{document}